\documentclass[11pt, a4paper]{article}

\usepackage{inputenc} 
\usepackage[english]{babel}
\usepackage{dsfont}
\usepackage{setspace}
\usepackage{tocloft}
\usepackage[totalheight=24 true cm, totalwidth=17 true cm]{geometry}
\usepackage{enumitem}

\usepackage{bbding}
\usepackage{amsmath}
\usepackage{amsthm}
\usepackage{amssymb}
\usepackage{mathrsfs}
\usepackage{mathtools}
\usepackage{ stmaryrd }
\usepackage{url}
\usepackage{wrapfig}
\usepackage{starfont}
\usepackage{pifont}
\usepackage{eurosym}
\usepackage{subfloat}

\usepackage{siunitx}
\usepackage{xcolor}
\usepackage{subfig}

\usepackage[maxbibnames=99]{biblatex}
\addbibresource{biblio.bib}

\usepackage{imakeidx}
\makeindex[]

\usepackage{multicol}
\usepackage{tikz,pgf}
\usepackage{tikz-cd}
\usepackage{wasysym}
\usepackage{graphicx}
\usepackage{fancyhdr}
\usepackage{verbatim}

\usepackage[all,dvips,knot,web,arc,curve,color,frame]{xy}
\usepackage[all,knot,arc,color,web]{xy}
\xyoption{arc}
\xyoption{web}
\xyoption{curve}

\numberwithin{equation}{section}

\theoremstyle{plain}
\newtheorem{theorem}{Theorem}[section]
\newtheorem{proposition}[theorem]{Proposition}
\newtheorem{corollary}[theorem]{Corollary}
\newtheorem{lemma}[theorem]{Lemma}
\newtheorem*{claim}{Claim}
\newtheorem*{theorem*}{Theorem}

\theoremstyle{definition}
\newtheorem{definition}[theorem]{Definition}
\newtheorem{construction}[theorem]{Construction}

\newtheorem{remark}[theorem]{Remark}

\newtheorem{notation}[theorem]{Notation}
\newtheorem{example}[theorem]{Example}
\newtheorem{question}[theorem]{Question}
\usepackage[bookmarksnumbered=true]{hyperref} 
\hypersetup{
     colorlinks = true,
     linkcolor = blue,
     anchorcolor = blue,
     citecolor = teal,
     filecolor = blue,
     urlcolor = blue
     }

\DeclareMathOperator{\cl}{cl}
\DeclareMathOperator{\comb}{comb}
\DeclareMathOperator{\diag}{diag}

\DeclareMathOperator{\rk}{rk}

\DeclareMathOperator{\Span}{Span}
\DeclareMathOperator{\sgn}{sgn}

\DeclareMathOperator{\B}{\mathscr{B}}
\DeclareMathOperator{\C}{\mathscr{C}}
\DeclareMathOperator{\CC}{\mathbb{C}}
\DeclareMathOperator{\CCC}{\mathcal{C}}
\DeclareMathOperator{\D}{\mathscr{D}}
\newcommand{\F}{\mathscr F}

\newcommand{\I}{\mathscr I}
\DeclareMathOperator{\II}{\mathcal{I}}
\DeclareMathOperator{\kk}{\mathbf{k}}
\DeclareMathOperator{\LL}{\mathcal{L}}
\DeclareMathOperator{\LLL}{\mathscr L}
\DeclareMathOperator{\PP}{\mathbb{P}}
\DeclareMathOperator{\PPPP}{\mathcal{P}}

\DeclareMathOperator{\ZZ}{\mathbb{Z}}

\usepackage[maxbibnames=99]{biblatex}
\addbibresource{biblio.bib}

\title{Liftable Point-Line Configurations: Defining Equations and Irreducibility of Associated Matroid and Circuit Varieties}
\author{Oliver Clarke, Giacomo Masiero and Fatemeh Mohammadi
}

\begin{document}
\maketitle

\begin{abstract}
We study point-line configurations through the lens of projective geometry and matroid theory. Our focus is on their realisation spaces, where we introduce the concepts of liftable and quasi-liftable configurations, exploring cases in which an $n$-tuple of collinear points can be lifted to a non-degenerate realisation of a point-line configuration. We show that forest configurations are liftable and characterise the realisation space of liftable configurations as the solution set of certain linear systems of equations. Moreover, we study the Zariski closure of the realisation spaces of liftable and quasi-liftable configurations, known as matroid varieties, and establish their irreducibility. Additionally, we compute an irreducible decomposition for their corresponding circuit varieties. Applying these liftability properties, we present a procedure generate some of the defining equations of the associated matroid varieties. As corollaries, we provide a geometric representation for the defining equations of two specific examples: the quadrilateral set and the $3\times4$ grid. While the polynomials for the latter were previously computed using specialised algorithms tailored for this configuration, the geometric interpretation of these generators was missing. We compute a minimal generating set for the corresponding ideals.
\end{abstract}

{\hypersetup{linkcolor=black}
\setcounter{tocdepth}{1}
\setlength\cftbeforesecskip{1.1pt}
{\tableofcontents}}

\section{Introduction}

The axiomatic definition of matroids was established in 1935 by Whitney \cite{Whi}, with MacLane highlighting their intimate connection with projective geometry soon after \cite{Mac}.
One prominent tool in this context is the Grassmann-Cayley algebra, which constructs polynomial equations from a given set of synthetic projective geometric statements, see e.g.~\cite{white1995tutorial, white2017geometric}. Here, we provide other geometric tools for constructing such polynomials and apply them in specific examples to demonstrate that the constructed polynomials minimally generate the corresponding ideal.

\medskip
A matroid, denoted by $M$, is a combinatorial object that extends the notion of linear independence from vector spaces. The matroid records all possible combinations of linearly independent vectors within a given set of vectors in a vector space, see \cite{Whi, Oxl}. If this process is reversible, meaning that given a matroid $M$, we can identify such a vector collection, these vectors are called a realisation of $M$. The space of all realisations of $M$ is denoted as $\Gamma_M$. The matroid variety $V_M$ of $M$ is defined as the Zariski closure of this realisation space.
This notion, introduced in \cite{GGMS}, gives rise to a deep combinatorial structure called the matroid stratification of the Grassmannian. However, understanding geometric properties of these strata, such as their irreducibility and defining equations \cite{STW}, is a challenging problem. So, it is natural to consider specific classes of matroids. For instance, the matroid varieties of
uniform matroids have been extensively studied in commutative algebra, in the context of determinantal varieties, see e.g.~\cite{bruns1990number, bruns2003determinantal, sturmfels1990grobner, Fatemeh, Fatemeh2}.

\smallskip

In this work, we focus on matroids of rank three, represented by point-line configurations, and use tools from incidence and projective geometry to study their associated varieties and defining polynomial equations. Specifically, we develop methods to translate the incidence structure of the underlying configuration into a geometric representation for their polynomials.

\smallskip

The matroid varieties arising from point-line configurations are a rich and diverse family. For example, the Mn\"ev-Sturmfels Universality Theorem \cite{LeVa, Mne, Stu1} shows that matroid varieties satisfy Murphy's law in algebraic geometry, i.e., given any singularity of a semi-algebraic set, there is a matroid variety with the same singularity, up to a mild equivalence on singularities. See also \cite{bokowski1989computational}. Here, the matroid varieties associated with a point-line configuration can achieve all such singularities. Additionally, the extra structure induced by point-line configurations is mirrored in other contexts such as conditional independence constraints in algebraic statistics \cite{DSS, CGMM, CMM, caines2022lattice,clarke2020conditional}.

\smallskip

While the relationship between matroids and projective geometry is now well-established \cite{Mac}, the utilisation of projective incidence geometry in investigating matroid varieties is a relatively recent development. For example, the Grassmann-Cayley algebra can be employed to construct some polynomial equations in the matroid ideal $I_M$; see the example below from~\cite{STW, Stu4}.
\begin{example}
    The associated ideal of the matroid in Figure~\ref{fig:3 conc} contains three degree-3 polynomials reflecting collinearities, alongside a degree-6 polynomial derived via the Grassmann-Cayley method.
\end{example}

Although, given a matroid $M$, classical tools such as the Grassmann-Cayley algebra can be employed to construct some polynomial equations in $I_M$ which are not determined by the circuits of the matroid, i.e.~lie in $I_M\backslash I_{\C(M)}$, the description of all such polynomials remains incomplete. This is because the construction of the ideal involves a saturation step that encodes the independence relations of the matroid, potentially introducing additional polynomials not necessarily derived from this method \cite{CGMM, STW}. 
The current algorithms to compute saturation of ideals has high complexity and, consequently, provide results only for small matroids. In addition, when results are obtained in this way, the outputs often consist of 
long polynomials that cannot be parsed by a human, which gives little geometric intuition, see the following two examples, the first one computed by Macaulay2~\cite{GrSt}.

\begin{example}\label{exa:quadset}
The ideal of the quadrilateral set in Figure~\ref{fig:introd} is generated by 14 polynomials:
    \begin{itemize}
        \item[a.] 4 of them are degree three, deduced directly from the collinearities in the point-line configuration.
        \item[b.] The remaining generators are degree-6  polynomials in 18 variables, consisting of 14 or 16 terms.
    \end{itemize}   
\end{example}

\begin{example}\label{exa:3x4grid}
Consider the matroid of the $3 \times 4$ grid from Figure~\ref{fig:intro_3x4g}.  To compute the associated ideal of this matroid, the standard Gr\"obner basis computation algorithms do not terminate. Pfister and Steenpass, in \cite{PfSt}, developed and optimised a specific algorithm for this case. Through numerical analysis, they demonstrated that the corresponding ideal has 44 generators:
    \begin{itemize}
        \item[c.] 16 of them are degree-3, deduced directly from collinearities in the point-line configuration.
        \item[d.] The remaining generators are degree-12 polynomials in 36 variables, consisting of $\approx 250$ terms.
    \end{itemize}
However, there is no geometric description of these polynomials in terms of Grassmann-Cayley algebra. \end{example}

\begin{figure}[h]
\subfloat[3 concurrent lines]{\label{fig:3 conc}
\centering
\includegraphics[scale = 0.3]{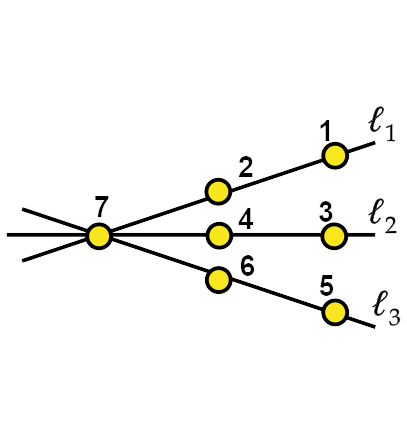}
}
\hfill
\subfloat[Quadrilateral set]{\label{fig:introd}
\centering
\includegraphics[scale = 0.3]{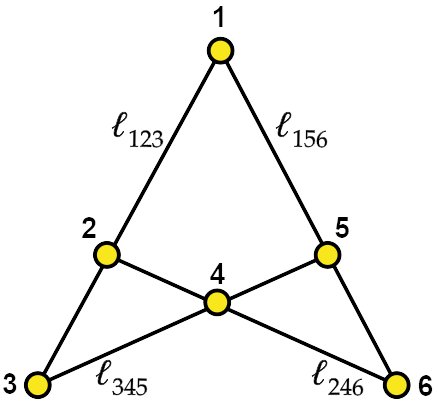}
}
\hfill
\subfloat[$3 \times 4$ Grid]{\label{fig:intro_3x4g}
\centering
\includegraphics[scale = 0.3]{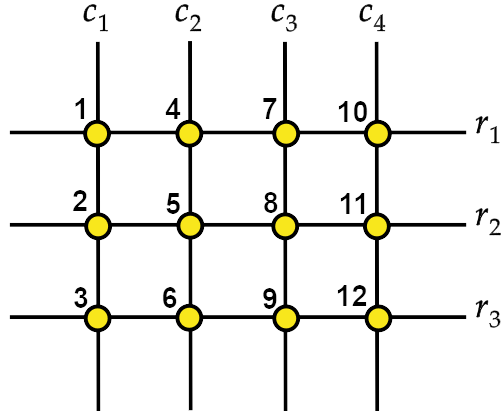}
} \hspace{30pt}
\caption{(a) 3 concurrent lines; (b) Quadrilateral set $\LL_{QS}$; (c) $3\times4$ grid $\LL_{G^3_4}$}
\label{fig:exa}
\end{figure}\medskip

\noindent Motivated by the above example, we investigate a new process to construct polynomials in the matroid ideal. 
Adopting an incidence-geometry viewpoint, we explore the conditions under which a tuple of collinear points in $\mathbb{P}^2_{\mathbb{C}}$ can be lifted to a non-trivial realisation of a given point-line configuration $\mathcal{C}$.
\begin{question}\label{que:liftability}
    Let $\CCC$ be a point-line configuration with $n$ points, $\ell$ a line in $\PP^2_{\CC}$ and $P$ a point in $\PP^2_{\CC} \setminus \ell$. Under what conditions is an $n$-tuple of points on $\ell$ the image, under the projection from $P$ to $\ell$, of a non-trivial realisation of $\mathcal{C}$?
\end{question}
When this is possible, we refer to $\mathcal{C}$ as \textit{liftable}; and when this is possible, up to removing a line, then we call $\CCC$ \textit{quasi-liftable}. See Definitions~\ref{def: liftable} and~\ref{def: quasi-liftable}. In Section~\ref{sec:circ}, we analyse such configurations and their associated matroid and circuits varieties and ideals.

\medskip
The following theorem summarises our main results from Section~\ref{sec:circ}. Below, we denote the ideal generated by the circuits of the matroid as $I_{\C(M)}$ and the matroid ideal as $I_M$. Similarly, we use the notation $V_{\C(M)}$ and $V_M$ for their associated varieties.

\begin{theorem}\label{thm:intro} Let $M$ be a rank-$3$ matroid whose associated point-line configuration $\CCC_M$ has no triplets of concurrent lines. Then,
\begin{itemize}
    \item The matroid variety $V_{M}$ is irreducible. \hfill{\rm(Theorem~\ref{decomposition liftable})}
    \item If $\CCC_M$ is liftable, then $V_{\C(M)} = V_M$ and $\sqrt{I_{\C(M)}} = I_{M}.$\hfill{\rm(Theorem~\ref{circuit variety of liftable conf.})}
    \item If $\CCC_M$ is connected quasi-liftable, then
        $V_{\C(M)} = V_0 \cup V_{M}$ and $\sqrt{I_{\C(M)}} = I_0 \cap I_{M}$ \hfill{\rm(Theorem~\ref{decomposition quasi-liftable})}
    
    where $V_0$ is the matroid variety whose associated configuration is a line with $n$ marked points. Furthermore, the decompositions are, respectively, irreducible and prime.
\end{itemize}
\end{theorem}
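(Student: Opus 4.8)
The plan is to reduce to connected configurations and then split according to whether $\CCC_M$ is liftable or (connected) quasi-liftable, using in both cases the description, obtained in Section~\ref{sec:circ}, of the realisation space of a liftable configuration as the solution set of an explicit linear system. First, if $\CCC_M$ is a disjoint union of sub-configurations then $M$ is a direct sum, the circuit set splits accordingly, and both $V_M$ and $V_{\C(M)}$ factor as products of the corresponding varieties of the pieces; since a product of irreducible varieties is irreducible and the minimal primes of a product are the products of minimal primes, it suffices to prove all three assertions for connected $\CCC_M$. A connected configuration with no three concurrent lines is, moreover, either liftable (for instance, forests) or quasi-liftable with the removed line leaving a connected liftable configuration, which is exactly the dichotomy between the second and third bullets.

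For irreducibility of $V_M$ in the liftable case, I would answer Question~\ref{que:liftability} concretely: fixing a generic line $\ell$ and a generic centre $P\notin\ell$, projection from $P$ presents a realisation of $\CCC_M$ as an $n$-tuple of points on $\ell$ together with a point in the fibre over it, and the liftability analysis exhibits this fibre as the solution set of a linear system of constant rank over a dense open locus of admissible tuples. Then $\Gamma_M$ fibres into affine spaces of constant dimension over a dense open subset of an irreducible base (admissible tuples, plus the choice of $(\ell,P)$), so $\Gamma_M$, and hence $V_M=\overline{\Gamma_M}$, is irreducible. For quasi-liftable $\CCC_M$, remove the distinguished line $\ell_0$ to obtain a connected liftable $\CCC_{M'}$; then $\Gamma_M$ is cut out inside $\Gamma_{M'}$ by the single extra collinearity imposed by $\ell_0$, and the same linear-system description shows this locus is dense in an irreducible subvariety of the irreducible $V_{M'}$, giving irreducibility of $V_M$.

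For the circuit varieties, the inclusion $V_M\subseteq V_{\C(M)}$ is automatic since circuits vanish on every realisation. Conversely, given $\mathbf p\in V_{\C(M)}$, project $\mathbf p$ from a generic centre onto a generic line to get an $n$-tuple on $\ell$; if it is admissible for the linear system above, lifting it yields a family of realisations of $M$ specialising to $\mathbf p$, so $\mathbf p\in V_M$. When $\CCC_M$ is liftable and has no concurrent triples the circuit equations force admissibility, giving $V_{\C(M)}=V_M$; when $\CCC_M$ is connected quasi-liftable the only way admissibility can fail is that the collinearity forced by $\ell_0$ propagates, through connectedness, to all $n$ points, i.e.\ $\mathbf p\in V_0$, giving $V_{\C(M)}\subseteq V_0\cup V_M$ and, with the obvious reverse inclusion, $V_{\C(M)}=V_0\cup V_M$. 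In both cases $V_M$ (and $V_0$, a classical collinearity/determinantal variety) is irreducible, and neither component contains the other (e.g.\ $V_M$ contains non-collinear configurations whereas $V_0$ does not), so these are the irreducible decompositions; passing to ideals, $I_M=I(V_M)$ and $I_0=I(V_0)$ are prime and $\sqrt{I_{\C(M)}}=I(V_{\C(M)})$ equals $I_M$, respectively $I_0\cap I_M$.

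The main obstacle is the lifting step: showing that a configuration satisfying only the circuit equations is a limit of honest realisations of $M$, equivalently that $V_{\C(M)}$ carries no components beyond $V_M$ (and $V_0$). This is precisely where the hypothesis of no three concurrent lines and the liftability machinery are needed, to guarantee that the linear system governing the lift stays consistent exactly on the expected locus and no intermediate matroid degenerations appear. Transferring irreducibility across the removal of $\ell_0$ in the quasi-liftable case, and the product reduction for disconnected configurations, are the remaining technical points, but comparatively routine.
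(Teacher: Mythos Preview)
Your dichotomy in the first paragraph is false: it is not true that every connected configuration with no three concurrent lines is either liftable or quasi-liftable. For instance, the $3\times 5$ grid has no concurrent triples, but removing any single column leaves a $3\times 4$ grid, which the paper shows is \emph{not} liftable (only quasi-liftable); hence the $3\times 5$ grid is neither liftable nor quasi-liftable. Your entire strategy for the first bullet routes through this dichotomy, so as written it does not cover the general statement. The paper's proof of irreducibility (Theorem~\ref{decomposition liftable}) is completely different and does not use liftability at all: it peels off one line at a time using \cite[Theorem~4.5]{CGMM} (Theorem~\ref{irreducibility} here), which says that if every point of $\ell$ lies on at most two lines then irreducibility of $\Gamma_{M\setminus\ell}$ implies irreducibility of $\Gamma_M$; the base case is $\le 6$ lines (Proposition~\ref{six-lines}). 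The ``no concurrent triples'' hypothesis is exactly what makes this induction run for every line.

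Your argument for $V_{\C(M)}=V_M$ (and the quasi-liftable decomposition) has the right shape but skips the hard part. A point $\mathbf p\in V_{\C(M)}$ can realise a highly non-simple matroid: columns may be zero (loops) or parallel, so your ``project from a generic centre onto a generic line'' does not yield an $n$-tuple of distinct points to which the lifting machinery applies. The paper spends the bulk of Section~\ref{sec:decompose} on precisely this issue: Lemma~\ref{non-simple} is a lengthy case analysis (loops, multiple points, coinciding ghost lines, \dots) showing that any such $\mathbf p$ can be perturbed within an arbitrarily small Euclidean ball to a point realising a \emph{simple} matroid $N'\ge M$, after which the lifting/projection arguments go through. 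Your sentence ``if it is admissible for the linear system above'' and the closing paragraph acknowledge an obstacle but do not supply this reduction; without it the inclusion $V_{\C(M)}\subseteq V_M$ (resp.\ $\subseteq V_0\cup V_M$) is unproved. The paper's actual mechanism for that inclusion is also different from yours: rather than a single projection-and-lift, it uses, for each line $\ell$, the coordinate projection $\pi_\ell$ forgetting the points peculiar to $\ell$, applies the inductive hypothesis to the liftable $\CCC_M\setminus\ell$, and then shows $\bigcap_\ell \pi_\ell^{-1}(V_{M_\ell})\subseteq V_M$.
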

Specifically, Theorem~\ref{thm:intro} offers a geometric approach to generating certain polynomials in the ideal $I_M$, as demonstrated in Proposition~\ref{radical}. In Sections~\ref{sec:qua} and~\ref{sec:gri}, we illustrate how these polynomials suffice to generate the matroid ideal for the quadrilateral set and the $3 \times 4$ grid. Moreover, we address Question~\ref{que:liftability} for these two matroids. More precisely, we provide a characterisation of 6-tuples of collinear points that can be lifted to a quadrilateral set, and of 12-tuples that can be lifted to a $3 \times 4$ grid. For the quadrilateral set, this provides a new characterisation, equivalent to the one in \cite{Ric, Stu}.

Additionally, in the context of utilising incidence geometry to explore realisation spaces of matroids, we demonstrate that a subset of the introduced polynomials generates the ideal of the matroid varieties for both configurations. Notably, these polynomials rewrite the numerically obtained high-degree polynomials as sums of determinants involving the coordinates of the points.

\begin{theorem*}[Theorems~\ref{Generators} and~\ref{generators grid}]
    Let $\{R_1,R_2,R_3,U\}$ be a frame of reference in $\PP^2_{\CC}$, and let $\ell_{123}$ and $c_1$ be as in Figures~\ref{fig:introd}
and~\ref{fig:intro_3x4g}.
    \begin{itemize}
        \item The 10 generators in Example~\ref{exa:quadset}.b can be replaced by the following polynomials defined in \eqref{char1}:
        $$QS(\ell_{123}; R_i, R_j, R_k) \qquad \hbox{for any } i \leq j\leq k, \hbox{ with } i,j,k \in \{1,2,3\}.$$
        \item The 28 generators in Example~\ref{exa:3x4grid}.d can be replaced by the following polynomials from~\eqref{generator G34}: $$G^3_4(c_{1}; R_i, R_j, R_k, R_l, R_m,R_n) \qquad \hbox{for any }  i \leq j\leq k \leq l \leq m \leq n, \hbox{with } i,j,k,l,m,n \in \{1,2,3\}.$$
    \end{itemize}
\end{theorem*}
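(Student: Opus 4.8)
The plan is to prove the two bullet points in parallel, since they have the same logical structure: in each case we must show that a known generating set of the matroid ideal $I_M$ (the explicit $14$ polynomials for the quadrilateral set from Example~\ref{exa:quadset}, respectively the $44$ polynomials for the $3\times 4$ grid from Example~\ref{exa:3x4grid}) can be replaced by the collinearity generators together with the short list of geometrically defined polynomials $QS(\ell_{123}; R_i,R_j,R_k)$, resp.\ $G^3_4(c_1; R_i,\dots,R_n)$. First I would invoke the liftability machinery: both configurations have no triplets of concurrent lines, so Theorem~\ref{thm:intro} applies and gives $\sqrt{I_{\C(M)}} = I_M$ (the quadrilateral set is liftable; for the grid one checks it is connected quasi-liftable and argues that the extra component $V_0$ is excluded, or works in the appropriate chart). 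The key input from Proposition~\ref{radical} is that the lifting construction produces explicit polynomials lying in $I_M$, and by the construction of $QS$ and $G^3_4$ in \eqref{char1} and \eqref{generator G34} these are exactly the polynomials obtained by specialising the ``free'' points of the configuration to the frame vectors $R_1,R_2,R_3$ in all combinatorially distinct ways.

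Next I would establish membership: each $QS(\ell_{123};R_i,R_j,R_k)$ lies in the matroid ideal of the quadrilateral set, and each $G^3_4(c_1;R_i,\dots,R_n)$ lies in the matroid ideal of the grid. This follows from the liftability characterisation proved earlier in Sections~\ref{sec:qua} and~\ref{sec:gri}: these polynomials vanish precisely on the locus of tuples of collinear points that lift to a genuine realisation, hence vanish on $\Gamma_M$ and therefore on $V_M$. Granting membership, it remains to show the reverse inclusion at the level of ideals — that the collinearity cubics together with these specialised polynomials already generate all of $I_M$. Here I would match the counts: there are exactly $10$ polynomials $QS(\ell_{123};R_i,R_j,R_k)$ with $i\le j\le k$ in $\{1,2,3\}$ (the number of multisets of size $3$ from a $3$-set), matching the $10$ non-collinearity generators in Example~\ref{exa:quadset}.b; and exactly $28$ polynomials $G^3_4$ with $i\le j\le k\le l\le m\le n$ (multisets of size $6$ from a $3$-set: $\binom{8}{2}=28$), matching the $28$ generators in Example~\ref{exa:3x4grid}.d.

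The concrete way to finish is a linear-algebra / Gr\"obner-basis verification in each fixed chart: after choosing the frame of reference $\{R_1,R_2,R_3,U\}$, the coordinates are reduced to the $18$ (resp.\ $36$) variables appearing in the examples, and one checks that the ideal generated by the cubics and the new polynomials contains each of the old degree-$6$ (resp.\ degree-$12$) generators, and conversely. Because both sides are already known to cut out the same variety $V_M$ (membership gives one containment of radicals, and $I_M$ is prime by Theorem~\ref{thm:intro} so it equals its radical), it suffices to verify that the new polynomials are not in the ideal generated by the cubics alone and that adding them reaches $I_M$; a minimality statement then follows by a standard degree/Hilbert-function count showing none of the $10$ (resp.\ $28$) new polynomials lies in the ideal generated by the others plus the cubics. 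I would also record the explicit rewriting — that each numerically-obtained polynomial equals an alternating sum of $3\times 3$ determinants in the point coordinates — as this is the ``geometric representation'' promised in the introduction and it is what makes the substitution transparent rather than merely an equality of ideals.

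The main obstacle I expect is the second inclusion, i.e.\ showing the new polynomials together with the cubics \emph{generate} $I_M$ rather than merely a subideal with the same radical. Equality of radicals is cheap (both vanish on $V_M$, which is irreducible, and $I_M$ is prime), but $I_{\C(M)}$ itself need not be radical — this is precisely the saturation phenomenon discussed around Examples~\ref{exa:quadset} and~\ref{exa:3x4grid} — so one genuinely has to certify that no further saturation is needed once these specific lifting polynomials are included. For the quadrilateral set this is small enough to do by direct computation; for the $3\times 4$ grid the degree-$12$, $36$-variable scale is exactly why Pfister--Steenpass needed a bespoke algorithm, so the argument must lean on the structural decomposition of Theorem~\ref{thm:intro} (writing $I_M$ as an intersection of primes, each controlled by a sub-configuration that is itself liftable) to localise the verification to manageable pieces, rather than attempting a single global Gr\"obner basis computation.
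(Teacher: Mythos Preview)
Your overall architecture matches the paper's, but there is a factual slip and a genuine gap. First, the quadrilateral set is \emph{quasi-liftable}, not liftable (this is exactly the content of Theorem~\ref{TFAE quad set}: the collinearity matrix $\Lambda_{QS}$ has generic rank $4$, so a generic $6$-tuple does not lift). Thus for both configurations the decomposition from Theorem~\ref{thm:intro} reads $\sqrt{I_{\C(M)}}=I_0\cap I_M$, not $\sqrt{I_{\C(M)}}=I_M$, and you cannot simply appeal to liftability for the quadrilateral set.

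The real gap is the step you yourself flag: passing from $V(I)=V_M$ to $I=I_M$. You propose to ``localise the verification to manageable pieces'' via the irreducible decomposition of Theorem~\ref{thm:intro}, but that decomposition is at the level of varieties and gives you nothing beyond $\sqrt{I}=I_M$; it cannot rule out that $I$ is strictly contained in its radical. The paper closes this gap by two different, quite concrete, means: for the quadrilateral set it computes a Gr\"obner basis of $I$ and observes the initial ideal is squarefree, hence $I$ is radical, whence $I=\sqrt{I}=I_M$; for the $3\times4$ grid it does not avoid Pfister--Steenpass at all but rather \emph{uses} their result that $I_{\C(G^3_4)}$ is an intersection of two explicit primes, and then checks in \textit{Macaulay2} that every Pfister--Steenpass generator reduces to zero modulo the $G^3_4$ polynomials, so $I$ coincides with the relevant prime component. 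Neither of these is a structural localisation argument, and without one of them (or an independent proof that $I$ is radical) your plan establishes only $\sqrt{I}=I_M$, not the claimed generating set.
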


To the best of our knowledge, there is currently no established method for computing the equations defining the matroid varieties, aside from the Grassmann-Cayley method~\cite{white2017geometric,white1995tutorial,sitharam2017handbook}. Furthermore, current computer algebra programs cannot handle the computations. The above theorems provide a geometric representation of the generators. Furthermore, we prove that the generated polynomial form a minimal generating set for the corresponding matroid ideals.

\medskip

We conclude the introduction by giving an outline of the paper. In Section~\ref{sec:pre}, we fix our notations for matroids, matroid varieties, circuit varieties, and point-line configurations. 
In addition, it describes an explicit way of associating a matroid variety and a circuit variety with a point-line configuration. In Section~\ref{sec:circ}, we present our main results, providing an irreducible decomposition of the circuit variety associated with point-line configurations having certain liftability properties. In Sections~\ref{sec:qua} and~\ref{sec:gri}, we apply and complete the results of Section \ref{sec:circ} for the quadrilateral set and the $3 \times 4$ grid.

\section{Preliminaries}\label{sec:pre}

In this section, we provide background on the theory of matroids arising from point-line configurations and fix our notation. We recall some known results about matroid varieties and their defining equations. For further details, we refer the reader to \cite{Oxl, Stu4, CGMM}. In addition, as far as notions of commutative algebra are concerned, we always refer to the definitions in \cite{Har}.

\begin{notation} \label{general notation}
    Throughout the paper, for $n, d \in \ZZ$, $1 \le d \leq n$ and $\D \subseteq 2^{[n]}$, we write: 
    \begin{itemize}
        \item $[n]$ for the set $\{1,\dots,n\}$ and $\binom{[n]}{d}$ for the collection of $d$-subsets of $[n]$;
        
        \item $\min(\D) = \{D \in \D \mid \text{if } D' \in \D \text{ and } D'  \subseteq D \text{ then } D' = D\}$.
    \end{itemize}

    Let $\kk$ be a field and let $X = (x_{i,j})^{i = 1, \dots, d}_{j = 1, \dots, n}$ be a $d \times n$ matrix of variables. We denote by $R = \kk[X]$
    the polynomial ring in the variables $x_{i,j}$. In addition, if $A \subseteq [d]$, $B \subseteq [n]$, and $\# A = \# B$, then we denote by $[A|B]_X \in R$ the minor of the submatrix of $X$ with rows indexed by $A$ and columns indexed by $B$. If $\# A = \# B = d$, we write $[B]_X \in R$. \hfill $\diamond$
\end{notation}

\subsection{Matroids and matroid varieties}
\begin{definition}[Matroid]
    A \textit{matroid $M$} is the datum of a finite ground set $E$, which will typically be $[n]$, together with a non-empty collection $\B(M) \subseteq 2^E$ of bases, satisfying the basis exchange axiom: 
    \begin{center}
    \textrm{if $B,B' \in \B(M)$ and $\beta \in B \setminus B'$, then there exists $\beta'\in B' \setminus B$ such that $(B \setminus \{\beta\}) \cup \{\beta'\} \in \B(M)$.}
    \end{center}
\end{definition}

\begin{remark}
    There are other equivalent definitions of a matroid in terms of other axioms.
    The bases of a matroid can determine these other pieces of data and we list the ones of relevance to us below:
    \begin{itemize}
        \item $\I(M) := \{ I \subseteq E \mid I \subseteq  B \text{ for some } B \in \B(M) \}$ the \textit{independent sets} are subsets of the bases;
        
        \item $\D(M) := \{D \subseteq E \mid D \notin \I(M) \}$ the \textit{dependent sets} of $M$ are the non-independent sets;
        
        \item $\C(M) := \min(\D(M))$ the \textit{circuits} of $M$ are the minimal dependent sets;
        
        \item $\rk_M : 2^E \rightarrow \ZZ$ defined by $\rk_M(S) = \max\{\#I \mid I \subseteq S,\, I \in \I (M) \}$ is the \textit{rank function} of $M$. By the basis exchange axiom, each basis has the same cardinality, which is equal to $\rk_M(E)$ and we call this value the \textit{rank} of the matroid $M$ denoted $\rk(M)$;
        
        \item $\F(M) := \{F \subseteq E \mid \rk(F \cup \{x\}) = \rk(F) + 1 \text{ for all } x \in E \backslash F\}$
        the \textit{flats} of $M$.
        
        \item The \textit{closure} operator $\cl_M \colon 2^E \rightarrow \F(M)$ of $M$ is defined as $\cl_M(S) = \{x \in E \mid \rk_M(S \cup x) = \rk_M(S)\}$. The closure of a subset of $E$ is the smallest flat containing the subset.
        \hfill $\diamond$
    \end{itemize} 
\end{remark}

\begin{definition}[Realisation space of a matroid]
Given a matroid $M$ on $E$ and a field $\kk$, 
a \textit{realisation} of $M$ over $\kk$ is a collection of vectors $\{v_i\}_{i \in E} \subseteq \kk^r$ such that for any subset $I \subseteq E$ we have that $\{v_i\}_{i \in I}$ is linearly independent if and only if $I \in \I(M)$ is an independent set of $M$. Typically, $E = [n]$ and we arrange the vectors $v_i$ as the columns of a matrix $V \in \kk^{r \times n}$. Note, for any realisation, the value $r$ is at least the rank of $M$.
The \textit{realisation space} of a matroid in $\kk^{r \times n}$ is 
    \[
    \Gamma_{M, r} = \{V \in \kk^{r \times n} \mid V \text{ is a realisation of } M\}.
    \]

\end{definition}

\begin{remark}
    The columns of a matrix always give rise to a matroid, but the converse is not true. If a matroid has (resp. does not have) a realisation over a field $\kk$ we call it \textit{realisable} (resp. \textit{non-realisable}) over $\kk$. Throughout the paper, we typically work over $\CC$ so, unless otherwise specified, we will say that a matroid is realisable if it is realisable over $\CC$. \hfill $\diamond$
\end{remark}

\begin{definition}[Matroid variety]\label{matroid variety}
    Let $M$ be a matroid on $[n]$ and $r \ge \rk(M)$ be a positive integer.
    The matroid variety $V_{M, r} = \overline{\Gamma_{M,r}}$ is the Zariski closure in $\kk^{r \times n}$ of the realisation space. We denote by $I_{M,r} = I(V_{M,r}) \subseteq \kk[X]$ the ideal of the matroid variety where $X = (x_{i,j})^{i = 1, \dots, r}_{j = 1, \dots, n}$ is a $r \times n$ matrix of variables.
    If $r$ is fixed, then we write $\Gamma_M$, $V_M$ and $I_M$ for $\Gamma_{M, r}$, $V_{M, r}$ and $I_{M, r}$ respectively. 
\end{definition}

    If a matroid is non-realisable over $\kk$, then its realisation space, and its associated variety are empty.

\begin{definition}[Circuit ideal and basis ideal]\label{construction: circuit and basis ideal} Let $M$ be a matroid on $[n]$ and fix some positive integer $r \ge \rk(M)$. Recall that $\C(M)$ are the circuits of $M$.
Consider the $r \times n$ matrix of variables $X = (x_{i,j})^{i = 1, \dots, r}_{j = 1, \dots, n}$. We define the \textit{circuit ideal} $I_{\C(M)} $ and the \textit{basis ideal} $J_M$ in $R$ as follows:
\[ I_{\C(M)} = \big{\langle} [A|B]_X \, | \, B \in \C(M) \text{ and } A \subseteq [r] \text{ such that } \# A = \# B \big{\rangle} 
\quad \text{and} \quad 
J_M = \Bigg{\langle} \prod_{\substack{B \in \B(M) \\ A \in \binom{[r]}{\rk M}}} [A|B]_X \Bigg{\rangle}. 
\]
In addition, we define the \textit{circuit variety} of $M$ as
    $V_{\C(M)} = V(I_{\C(M)}) \subseteq \kk^{r \times n}$.
\end{definition}

\begin{remark}
    Given a realisation $V \in \kk^{r \times n}$ of $M$, observe that each generator of $I_{\C(M)}$ above vanishes on $V$ by the linear dependence of subsets of columns of $V$. So $I_{\C(M)} \subseteq I_M$. On the other hand, the generator of the principal ideal $J_M$ does not vanish on $V$ as it is a product of non-zero minors of $V$. So the generator of $J_M$ is nowhere-vanishing on $\Gamma_M$. \hfill $\diamond$
\end{remark}

\begin{notation}\label{matroid order}
    Given two matroids $M$ and $N$ on the same ground set $E$,
    we say that $M \leq N$ if $\D(M) \subseteq \D(N)$, i.e., the dependent sets of $M$ are a subset of the dependent sets of $N$. This defines a partial order on the set of matroids with ground set $E$, which we call the \textit{dependency order}. We note that, in the literature, this order is the opposite of the order known as the \textit{weak order}. \hfill $\diamond$
\end{notation}

\begin{definition}[Combinatorial closure of a matroid variety]\label{def:combinatorial_closure}
    Let $M$ be a matroid and fix $r \ge \rk(M)$, we define the \textit{combinatorial closure} of the matroid variety as:
    \[V_{M, r}^{\comb} = \bigcup_{M \leq N} V_{N, r}.\]
    We denote by $I_{M, r}^{comb}$ the ideal $I(V_{M, r}^{comb})$. Whenever $r$ is fixed, then we omit it from the notation.
\end{definition}

We recall the following result from \cite[Lemma 3.5, and Proposition 3.9]{CGMM} that the combinatorial closure is Zariski closed and its defining ideal is the radical of the circuit ideal $\sqrt{I_{\C(M)}}$.

\begin{proposition}\label{ideal matroid variety}
    Let $M$ be a matroid, let $\C(M)$ be the set of its circuits, and fix $r \ge \rk(M)$. 
    \begin{itemize}
        \item The ideal $I_M$ of the matroid variety is the radical of the saturation $I_{\C(M)}$ by  $J_{M}$:

    \[ I_M = \sqrt{\left( I_{\C(M)} : J_{M}^\infty \right)},\] 
    where $\left( I_{\C(M)} : J_{M}^\infty \right) = \{ f \in R \mid \text{for all } g \in J_M \text{ there exists } k \ge 1 \text{ such that } fg^k \in I_{\C(M)}\}.$

    \item The circuit variety of $M$ coincides with the combinatorial closure of the matroid variety:
    \[V_{\C(M)} = V_M^{\comb} \quad \hbox{ and equivalently } \quad I_M^{\comb} = \sqrt{I_{\C(M)}}.\]
    \end{itemize}
\end{proposition}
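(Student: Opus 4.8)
The plan is to reduce both statements to elementary facts about matroids of matrices together with Hilbert's Nullstellensatz (working over $\CC$, or any algebraically closed field). For $V \in \kk^{r\times n}$ let $M_V$ denote the column matroid of $V$, and let $g$ be the single generator of $J_M$. Two observations will be used repeatedly: first, since a set is dependent in a matroid exactly when it contains a circuit, $V \in V(I_{\C(M)})$ if and only if every circuit of $M$ is dependent in $V$, if and only if $\D(M)\subseteq\D(M_V)$, i.e.\ $M \le M_V$ in the dependency order of Notation~\ref{matroid order}; second, $g(V)\neq 0$ if and only if $[A|B]_X(V)\neq 0$ for every basis $B$ of $M$ and every $A\in\binom{[r]}{\rk M}$, and in particular $g(V)\neq 0$ forces the columns of $V$ indexed by any basis of $M$ to be linearly independent.

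For the second bullet, I would first prove the set-level identity $V_{\C(M)} = \bigcup_{M \le N}\Gamma_N$: the inclusion $\subseteq$ is the first observation applied with $N = M_V$, and $\supseteq$ holds because $M \le N = M_V$ makes every circuit of $M$ dependent in $V$. As there are only finitely many matroids on $[n]$ and $V_{\C(M)} = V(I_{\C(M)})$ is Zariski closed, taking closures term by term yields $V_{\C(M)} = \bigcup_{M\le N}\overline{\Gamma_N} = \bigcup_{M\le N}V_N = V_M^{\comb}$ (here $V_N = \emptyset$ for non-realisable $N$). The ideal statement $I_M^{\comb} = \sqrt{I_{\C(M)}}$ is then immediate from the Nullstellensatz.

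For the first bullet, I would identify $V\bigl(I_{\C(M)} : J_M^{\infty}\bigr)$ with $\overline{\Gamma_M} = V_M$ and then apply the Nullstellensatz. Since $J_M = \langle g\rangle$ is principal, the standard saturation--localisation dictionary gives $V\bigl(I_{\C(M)} : J_M^{\infty}\bigr) = \overline{V(I_{\C(M)}) \setminus V(g)}$, so it suffices to show $\overline{V(I_{\C(M)}) \setminus V(g)} = \overline{\Gamma_M}$. The inclusion $V(I_{\C(M)})\setminus V(g) \subseteq \Gamma_M$ is easy from the two observations: $M \le M_V$ gives $\I(M_V)\subseteq\I(M)$, while $g(V)\neq 0$ makes the columns indexed by any basis of $M$ independent, hence every independent set of $M$ is independent in $V$; thus $\I(M_V)=\I(M)$ and $M_V = M$. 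The reverse inclusion $\Gamma_M \subseteq \overline{V(I_{\C(M)})\setminus V(g)}$ is the step I expect to be the main obstacle, since $\Gamma_M$ itself may be reducible and $g$ does vanish on parts of it. I would handle it via the left $\mathrm{GL}_r$-action: for fixed $V \in \Gamma_M$ and $U\in\mathrm{GL}_r$ one has $M_{UV} = M_V = M$, so $UV \in V(I_{\C(M)})$ always; moreover $\{U : g(UV)\neq 0\}$ is Zariski open in $\mathrm{GL}_r$ and nonempty, because each submatrix of $V$ with columns a basis of $M$ has rank $\rk M$, so for generic $U$ the product of its rows indexed by $A$ with those columns is a generic $(\rk M)\times(\rk M)$ matrix, hence invertible, and there are only finitely many pairs $(A,B)$ to control simultaneously. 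Therefore the morphism $U\mapsto UV$ sends a dense open subset of the irreducible group $\mathrm{GL}_r$ into $V(I_{\C(M)})\setminus V(g)$, and since the identity matrix lies in the closure of that subset, $V = I\cdot V$ lies in $\overline{V(I_{\C(M)})\setminus V(g)}$. Combining the two inclusions gives $\overline{V(I_{\C(M)})\setminus V(g)} = \overline{\Gamma_M} = V_M$, and the Nullstellensatz upgrades this to $I_M = \sqrt{I_{\C(M)} : J_M^{\infty}}$.
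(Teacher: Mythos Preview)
The paper does not give its own proof of this proposition; it is quoted from \cite[Lemma~3.5 and Proposition~3.9]{CGMM}. Your argument is correct and self-contained, so there is nothing to compare against here beyond noting that your second bullet is precisely the stratification argument one expects (and that underlies the cited result): $V_{\C(M)}=\bigcup_{N\ge M}\Gamma_N$ set-theoretically, and finiteness of matroids on $[n]$ lets one pass to closures termwise.

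For the first bullet, the $\mathrm{GL}_r$-orbit trick for the inclusion $\Gamma_M\subseteq\overline{V(I_{\C(M)})\setminus V(g)}$ is the substantive step and is carried out correctly. One small tightening: the phrase ``a generic $(\rk M)\times(\rk M)$ matrix'' is informal, since $U_A V_B$ ranges only over a linear subspace depending on $V_B$. What you actually need is that $\det(U_A V_B)$ is a nonzero polynomial in the entries of $U$; this follows because the linear map $U\mapsto U_A V_B$ from $\kk^{r\times r}$ to $\kk^{(\rk M)\times(\rk M)}$ is surjective (as $V_B$ has full column rank $\rk M$), so its image meets $\mathrm{GL}_{\rk M}$. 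Intersecting the finitely many nonempty open conditions $\det(U_A V_B)\neq 0$ with $\mathrm{GL}_r$ then gives a dense open set containing the identity in its closure, and continuity of $U\mapsto UV$ finishes the argument exactly as you wrote.
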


\subsection{Point-line configurations}

Throughout the paper, we consider matroids of rank at most three as incidence structures referred to as \textit{point-line configurations}. This approach enables us to analyse the realisation space of these matroids using techniques from incidence geometry.

\begin{definition}[Abstract and linear point-line configuration]\label{abstract point-line configuration}
    An \textit{abstract point-line configuration} is a triple $(\PPPP, \LL, \II)$, where the elements of $\PPPP$ and $\LL$ are called \textit{points} and \textit{lines}, respectively. The elements $(p, \ell) \in \II \subseteq \PPPP \times \LL$ are the \textit{incidences}. In this case, we say that \textit{$p$ lies on $\ell$} or, equivalently, that \textit{$\ell$ is incident to $p$}. An abstract point-line configuration is \textit{linear} if there is at most one line incident to a pair of given points, and every line is incident with at least two points.
\end{definition}

Typically, we think of linear point-line configurations that arise from the Euclidean or projective plane. In these cases, the lines are $1$-dimensional affine or linear subspaces, respectively.

\begin{notation}
    For each line $\ell \in \LL$, we identify $\ell$ with $\{p \in \PPPP \mid (p, \ell) \in \II \}$ the set of points lying on the line. So we write $p \in \ell$ whenever $p$ lies on $\ell$ and $\# \ell$ for the number of points on $\ell$. For each $p \in \PPPP$, we denote by $\LL_p = \{\ell \in \LL \mid p \in \ell \}$ the set of lines passing through $p$. \hfill $\diamond$
\end{notation}

Given a realisable rank-three matroid, we define its induced point-line configuration as follows.

\begin{definition}[Point-line configuration of a matroid]
    Let $M$ be a matroid of rank $3$.
    We define the \textit{point-line configuration} $\CCC_M := ([n], \LL_M, \II_M)$ as follows. The point set of $\CCC_M$ is the ground set $[n]$ of $M$. The lines $\LL_M := \{F \in \F(M) \mid \rk(F) = 2, \, |F| \ge 3 \}$ are the rank-two flats of $M$ of size at least three. The incidences $\II_M$ are given by inclusion, i.e., $(p, \ell) \in \II_A$ if and only if $p \in \ell$. Furthermore, if $M$ has a realisation $A \in \kk^{d \times n}$, then we write $\CCC_A$ for the point-line configuration $\CCC_M$.
\end{definition}

In general, the point-line configuration of a realisable matroid is not linear. As the next example shows, this is due to the \textit{loops} and non-trivial \textit{parallel classes} of the matroid.

\begin{definition}[Loops and parallel classes of a matroid]
    Let $M$ be a matroid on ground set $E$. An element $e \in E$ is a \textit{loop} if, for every basis $B$ of $M$, we have $e \notin B$. Equivalently, an element $e \in E$ is a loop if, for every flat $F$ of $M$, we have $e \in F$. The \textit{parallel classes} of $M$ are the rank-one flats. A parallel class is \textit{trivial} if it contains one non-loop element. Equivalently, a parallel class is an inclusion-wise maximal subset of $E$ such that any pair of elements of the subset are dependent.
\end{definition}

\begin{example}
    Consider the matroid $M$ on ground set $[6]$, realised by the columns of the matrix:
    \[
    A = \begin{bmatrix}
        0 & 1 & 2 & 0 & 1 & 0 \\
        0 & 0 & 0 & 1 & 1 & 0 \\
        0 & 0 & 0 & 0 & 0 & 1
    \end{bmatrix}.
    \]
    The first column of $A$ is the zero vector, hence it belongs to no bases of $M$, and so $1$ is a loop of $M$. The parallel classes of $M$ are $\{123, 14, 15, 16\}$. So $123$ is a non-trivial parallel class of $M$. The point-line configuration $\CCC_A$ has lines
    \[
    \LL_A = \{
    12345, 1236, 146, 156
    \}.
    \]
    This configuration is not linear, since the lines $12345$ and $1236$ are incident to $3$ common points. \hfill $\diamond$
\end{example}

We now recall the notion of \textit{simplification} of a matroid, leading to linear point-line configurations.

\begin{definition}[Simple matroid and simplification of a matroid]
    We say that a matroid $M$ is \textit{simple} if $M$ has no loops and every parallel class is trivial. The \textit{simplification} of $M$ is a matroid $M'$ on ground set $E' = \{F \in \F(M) \mid \rk(F) = 1\}$ of rank-one flats of $M$. A set $\{F_1, F_2, \dots, F_k\} \subseteq E'$ is a basis of $M'$ if for each $i \in [k]$ there exists $f_i \in F_i$ such that $\{f_1, f_2, \dots, f_k\}$ is a basis of $M$.
\end{definition}

It is straightforward to check that the simplification of a matroid is indeed a matroid and moreover that it is simple. The simplification of a simple matroid $M$ is itself, and so simplification is a closure operator on the class of matroids. 
In the language of \textit{matroid deletion}, see \cite{Oxl}, the simplification of $M$ can be viewed as the matroid obtained by deleting the loops of $M$ and, for each parallel class of $M$, deleting all but one non-loop element.

\begin{proposition}
    Let $M$ be a simple matroid over $[n]$. Then, the point-line configuration $\CCC_M$ is linear. In particular, if $M$ has a realisation $A \in \CC^{3 \times n}$, then the columns of $A$ are distinct points in $\PP^2_{\CC}$ and the lines of $\CCC_M$ naturally correspond to $1$-dimensional linear subspaces in $\PP^2_{\CC}$.
\end{proposition}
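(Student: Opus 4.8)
The statement has two parts: first, that $\CCC_M$ is a linear point–line configuration when $M$ is simple; second, that for a realisation $A \in \CC^{3\times n}$ the columns are distinct points of $\PP^2_\CC$ and the lines of $\CCC_M$ correspond to genuine projective lines. I would handle linearity first, purely combinatorially. Recall $\CCC_M = ([n], \LL_M, \II_M)$ with $\LL_M = \{F \in \F(M) \mid \rk(F) = 2,\ |F| \ge 3\}$ and incidence by inclusion. To verify Definition~\ref{abstract point-line configuration}, I must check two things: (i) every line is incident with at least two points, which is immediate since $|F| \ge 3$ by definition of $\LL_M$; and (ii) at most one line passes through any pair of distinct points. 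For (ii), suppose $p \ne q$ and both lie on flats $F, F' \in \LL_M$. Then $\{p,q\} \subseteq F \cap F'$. Since $M$ is simple, $p$ and $q$ are not loops and lie in distinct parallel classes, so $\rk_M(\{p,q\}) = 2$. The intersection of flats is a flat, so $F \cap F'$ is a flat of rank $\ge 2$ contained in both $F$ and $F'$, which have rank $2$; hence $F \cap F' = F = F'$. This gives linearity.

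For the second part, let $A \in \CC^{3\times n}$ realise $M$, with columns $v_1, \dots, v_n$. Since $M$ is simple it has no loops, so each $v_i \ne 0$ and determines a point $[v_i] \in \PP^2_\CC$. Distinctness follows because distinct ground-set elements $i \ne j$ lie in distinct parallel classes (triviality of parallel classes), so $\{v_i, v_j\}$ is independent in $M$, i.e. $v_i$ and $v_j$ are linearly independent, hence $[v_i] \ne [v_j]$. For the correspondence between lines of $\CCC_M$ and projective lines: given $F \in \LL_M$, the set $\{v_i : i \in F\}$ has rank $2$ (as $F$ is a rank-two flat), so it spans a $2$-dimensional linear subspace $W_F \subseteq \CC^3$, i.e. a projective line $\PP(W_F) \subseteq \PP^2_\CC$; and because $F$ is a flat, $i \in F$ if and only if $v_i \in W_F$, so $F$ is exactly the set of ground-set elements whose points lie on $\PP(W_F)$. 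Conversely I should note that every projective line through two or more of the $[v_i]$ arising from a rank-two flat of size $\ge 3$ is of this form — but this is precisely the indexing set $\LL_M$, so the correspondence $F \mapsto \PP(W_F)$ is the desired bijection onto its image.

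I do not expect any serious obstacle here; the statement is essentially a bookkeeping exercise translating between matroid-theoretic language (flats, parallel classes, loops) and the geometry of $\PP^2_\CC$. The one point requiring a moment's care is the flat-intersection argument in the linearity proof: one must use that an intersection of flats is a flat (a standard fact about the closure operator $\cl_M$, since $F \cap F' = \cl_M(F \cap F')$ follows from $F, F'$ being closed) together with simplicity to pin down $\rk_M(\{p,q\}) = 2$, ruling out the degenerate possibilities that $p$ or $q$ is a loop or that $p,q$ are parallel. Everything else is a direct unwinding of definitions.
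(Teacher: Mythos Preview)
Your proof is correct. The overall shape matches the paper's: verify both conditions in Definition~\ref{abstract point-line configuration} for linearity, then unwind the realisation.

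The one point of genuine difference is the argument that two distinct points lie on at most one line. You argue via the lattice of flats: $F \cap F'$ is a flat containing $\{p,q\}$, and since $M$ is simple $\rk_M(\{p,q\}) = 2$, forcing $F \cap F' = F = F'$. The paper instead reformulates the condition as ``if $\rk_M(pqr) = \rk_M(pqs) = 2$ then $\rk_M(pqrs) = 2$'' and invokes the Circuit Elimination Axiom, using simplicity to ensure $pqr$ and $pqs$ are genuine circuits. Both routes are standard; your flat-intersection argument is arguably more direct and makes the role of simplicity (ruling out $\rk_M(\{p,q\}) < 2$) more transparent. The treatment of the realisation part is essentially identical in both.
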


\begin{proof}
    We start the proof by pointing out that:
    \begin{itemize}
        \item singletons in $[n]$ are all the rank-one flats of $M$ and correspond to points of $\PP^2_{\CC}$;
        \item rank-two flats of $M$ of cardinality at least 3 correspond to lines in $\PP^2_{\CC}$.
    \end{itemize}
    Thus, assuming that $M$ is simple, immediately yields that its rank-two flats of cardinality $\geq 3$ contain at least three non-trivial parallel classes. Thus, every line of $\CCC_M$ is incident with at least $3 \geq 2$ points.
    We additionally have to check that any pair of points in $\CCC_M$ lies in at most one line. We prove equivalently that, if $\{pqrs\} \subseteq [n]$ is such that $\rk_M(pqr) = \rk_M(pqs) = 2$, then $\rk_M(pqrs) = 2$. As $M$ is a simple matroid, all 2-subsets of $[n]$ are elements of $\I(M)$. Thus $pqr$ and $pqs$ are circuits of $M$ and the statement follows by the Circuit Elimination Axiom (see e.g. \cite[Lemma 1.1.3]{Oxl}).
\end{proof}

For the rest of the paper we fix the following conventions for working with realisable matroids.

\begin{notation}
    Let $M$ be a realisable matroid on ground set $E$ of rank at most three, and $M'$ its simplification. The point-line configuration $\CCC$ of $M$ is taken to mean $\CCC_{M'}$ the point-line configuration of $M'$. So the points of $\CCC$ are the subsets of $E$ given by the parallel classes of $M$. By a slight abuse of terminology, we say \textit{loops of $\CCC$} for the loops of $M$, and \textit{two points coincide in $\CCC$} when two non-loop elements of $E$ belong to the same parallel class of $M$.\hfill $\diamond$
\end{notation}

Suppose that $M$ and $N$ are matroids of rank at most three on the same ground set $E$. Recall from Notation~\ref{matroid order} the dependency order on matroids. Assume that $M \le N$, i.e., that every dependent set of $M$ is dependent in $N$. In general, the point-line configurations $\CCC_M$ and $\CCC_N$ are different as they have different numbers of points and lines. We introduce the following notation for the purpose of comparing the configurations $\CCC_M$ and $\CCC_N$.

\begin{notation}
    Assume the above setup. We say that a pair of lines $\ell_1$ and $\ell_2$ in $\CCC_M$ \textit{coincide in $\CCC_N$} if the rank-two flats $F_1$ and $F_2$ of $M$, which give rise to $\ell_1$ and $\ell_2$ respectively, are contained in a common rank-two flat of $N$.
    Suppose that $\ell \in \CCC_M$ is a line that arises from a rank-two flat $F$ of $M$. The closure $\cl_N(F)$ gives rise to a collection of points of $\CCC_N$. If these points do not lie on a line of $\CCC_N$, then we say that these points lie on the \textit{ghost line of $\ell$} in $\CCC_N$. \hfill $\diamond$
\end{notation}

We conclude the section by stating some results which correlate the irreducibility of certain matroid varieties with properties of the corresponding point-line configuration.

\begin{proposition}[{\cite[Theorem 4.2]{CGMM}}]\label{six-lines}
    If $M$ is a simple matroid of rank 3, whose point-line configuration $\CCC_M$ has at most six lines, then $V_{M}$ is irreducible with respect to the Zariski topology.
\end{proposition}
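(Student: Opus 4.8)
We may assume $M$ is realisable over $\CC$, since otherwise $\Gamma_M$ and hence $V_M$ are empty and there is nothing to prove. As $\Gamma_M$ is by definition dense in $V_M$, it is enough to show that the realisation space $\Gamma_M\subseteq\CC^{3\times n}$ is irreducible. The plan is to induct on the number of points $n$: at each step I would delete from $M$ a single point $p$ lying on at most two lines of $\CCC_M$, and deduce irreducibility of $\Gamma_M$ from that of $\Gamma_{M\setminus p}$. The base case is a configuration with no lines, where $\Gamma_M$ is a nonempty Zariski-open subset of $(\CC^3)^n$, obtained by removing the loci where some columns are proportional or collinear beyond what $M$ prescribes, hence irreducible.

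For the inductive step I would fix such a point $p$ and pass to the deletion $M\setminus p$; this is again a simple matroid of rank at most three, and since deletion creates no new dependencies its configuration has at most six lines as well (lines can only lose points or vanish). Forgetting the column indexed by $p$ defines a morphism $\pi\colon\Gamma_M\to\CC^{3\times(n-1)}$ landing in $\Gamma_{M\setminus p}$, and the key point is to identify the fibre of $\pi$ over a realisation $W$ of $M\setminus p$, namely the set of $v\in\CC^3$ for which inserting $v$ as the $p$-th column realises $M$. Unwinding the rank conditions that involve $p$, this is the locus where $v$ avoids finitely many proportionality and collinearity conditions determined by $W$ and, in addition, lies on the realised line of each configuration line through $p$ (the columns of $W$ on any fixed such line span a single plane). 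So I expect three cases. If $p$ lies on no line, $\Gamma_M$ is a nonempty open subset of $\Gamma_{M\setminus p}\times\CC^3$. If $p$ lies on exactly one line $\ell$, then $\Gamma_M$ is a nonempty open subset of the total space of the rank-two vector bundle over $\Gamma_{M\setminus p}$ whose fibre over $W$ is the plane spanned by the columns of $W$ indexed by $\ell$; this total space is irreducible, being a vector bundle over an irreducible base. If $p$ lies on exactly two lines $\ell,\ell'$, then $\ell\cup\ell'$ spans a rank-three flat of $M$, so in every realisation of $M\setminus p$ the realised lines of $\ell$ and $\ell'$ are distinct; their intersection is then a single point depending morphically on $W$, and $\Gamma_M$ becomes isomorphic to a nonempty open subset of $\Gamma_{M\setminus p}$. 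In each case $\Gamma_M$ is a nonempty open subvariety of something irreducible, hence irreducible, closing the induction.

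What makes the induction run, and where the hypothesis of six lines is actually used, is the combinatorial claim that a simple rank-three configuration with $m\le 6$ lines always contains a point on at most two lines. I would prove this by contradiction: if every point lay on at least three of the $m$ lines, then double counting incidences gives $S:=\sum_\ell\#\ell=\sum_p\#\LL_p$, with $S\ge 3m$ because every line has at least three points. Writing $k_p=\#\LL_p\ge 3$ and observing that each pair of distinct lines meets in at most one configuration point, one also gets $\sum_p\binom{k_p}{2}\le\binom{m}{2}$; and since $\binom{k_p}{2}\ge k_p$ when $k_p\ge 3$, this yields $S=\sum_p k_p\le\binom{m}{2}$. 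Hence $3m\le\binom{m}{2}$, forcing $m\ge 7$, a contradiction. The same inequality shows the bound is sharp, the first possible failure occurring at seven lines, which is consistent with the $3\times 4$ grid.

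I do not expect a single hard step here; the work is in the bookkeeping of the deletion and in making the fibre analysis uniform. The most delicate point is the two-line case: one must be sure that the two configuration lines through $p$ remain distinct in every realisation, which follows from $\ell\cup\ell'$ having rank three, so that the forced location of the deleted point is an honest morphism on all of $\Gamma_{M\setminus p}$ rather than a rational map with indeterminacy; and one must also check that the open conditions cut out in each fibre are never vacuous, which is guaranteed by the standing assumption that $M$ is realisable.
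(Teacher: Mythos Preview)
The paper does not prove this proposition; it is quoted from \cite{CGMM} as an external input, so there is no in-paper argument to compare against. Your outline is essentially sound, and the combinatorial lemma---that a simple rank-three configuration with $m\le 6$ lines contains a point on at most two lines---is cleanly established by your double-counting argument.

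Two small repairs are worth making. In the two-line case you say $\Gamma_M$ becomes isomorphic to an open subset of $\Gamma_{M\setminus p}$, which is off by a dimension: the intersection of the two realised planes is a one-dimensional subspace of $\CC^3$, not a single vector, so you retain a $\CC^*$ worth of choices for the new column. The correct statement is that $\Gamma_M$ is open in the complement of the zero section of a line bundle over $\Gamma_{M\setminus p}$ (the fibrewise intersection of the two tautological plane bundles); this is still irreducible, so the conclusion survives. Second, your remark that the fibrewise open conditions are ``never vacuous'' because $M$ is realisable overreaches: realisability only gives that $\Gamma_M$ itself is nonempty, not that every fibre of $\pi$ is. But that weaker fact is exactly what you need, since a nonempty open subset of an irreducible variety is irreducible. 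Finally, watch that deleting $p$ can drop the rank when $p$ is a coloop (all other points on a single line); in that situation every other point lies on exactly one line and is not a coloop, so you may always choose $p$ so as to keep rank three, or alternatively extend the statement to rank at most three and handle the rank-two base case directly.
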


\begin{notation}
    Let $\CCC = (\PPPP, \LL, \II)$ be a point-line configuration and $\ell$ a line in $\LL$. We denote as $\CCC \setminus \ell$ the point-line configuration $(\PPPP', \LL', \II')$ where:
    \begin{itemize}
        \item $\PPPP' = \PPPP \setminus \{p \in \PPPP \, | \, (p,\ell) \in \II \hbox{ and } \# \LL_p = 1\}$,
        \item $\LL' = \LL \setminus \{\ell\}$, and
        \item $\II' = \II \setminus \{(q,\ell) \, | \, q \in \PPPP\}$. \hfill $\diamond$
    \end{itemize}
\end{notation}

\begin{theorem}[{\cite[Theorem 4.5]{CGMM}}]\label{irreducibility}
    Let $M$ be a simple rank-3 matroid, and let $\CCC_M$ be its point-line configuration.
    Suppose that $\ell$ is a line of $C_M$ such that $\# \{p \in \ell \mid \#\LL_p \ge 3\} \le 2$.
    Let $M_{\ell}$ be the simple matroid such that $\CCC_{M_{\ell}} = \CCC_M \setminus \ell$. If $\Gamma_{M_{\ell}}$ is irreducible, then so is $\Gamma_{M}$.
\end{theorem}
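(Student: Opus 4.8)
The plan is to exhibit a dominant rational map onto $\Gamma_M$ whose source is an irreducible variety built from $\Gamma_{M_\ell}$: since the image of an irreducible variety under a morphism has irreducible closure, and a set whose closure is irreducible is itself irreducible, this yields the result. First I would partition the points of $\ell$ according to the number of lines through them,
\[ B=\{p\in\ell \mid \#\LL_p=1\},\qquad A_2=\{p\in\ell \mid \#\LL_p=2\},\qquad A_3=\{p\in\ell \mid \#\LL_p\ge 3\}, \]
so that $\#A_3\le 2$ by hypothesis and $B$ is exactly the set of points removed in forming $\CCC_M\setminus\ell$. A preliminary combinatorial observation is that $\CCC_{M_\ell}=\CCC_M\setminus\ell$ consists of the lines of $\CCC_M$ other than $\ell$ with unchanged point sets — no line $m\neq\ell$ loses a point, because a point of such an $m$ lies on at least two lines and hence is not in $B$. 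In particular, for $p\in A_2$ the unique other line $m_p$ through $p$ survives in $\CCC_{M_\ell}$ and, since $m_p\cap\ell=\{p\}$ and $m_p$ misses $B$, carries at least two points outside $A_2\cup B$; the same holds for every line through a point of $A_3$. Thus in any realisation these lines, hence also the positions of the points of $A_3$, are determined by the coordinates of the points outside $A_2\cup B$, while each $p\in A_2$ is forced to sit at $\ell\cap m_p$.

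Then I would take the parametrising variety to be the locally closed subset $Z\subseteq\Gamma_{M_\ell}\times(\PP^2_\CC)^{\vee}\times(\PP^1_\CC)^{\#B}$ — where $(\PP^2_\CC)^{\vee}$ is the dual plane of lines in $\PP^2_\CC$ — of triples $(V',\ell',(b_i)_i)$ such that the points of $A_3$ have distinct coordinates in $V'$ and lie on $\ell'$, and the $b_i$ are distinct points of $\ell'$ avoiding the points of $A_3$. Over a dense open of $\Gamma_{M_\ell}$ the projection $Z\to\Gamma_{M_\ell}$ is a Zariski-locally-trivial bundle with irreducible fibre: the choice of $\ell'$ ranges over the pencil of lines through the $0$, $1$, or $2$ now-determined points of $A_3$ — that is, over $(\PP^2_\CC)^{\vee}$, over a $\PP^1_\CC$, or over a single point — and the $b_i$ range over a dense open of $(\PP^1_\CC)^{\#B}$; since $\Gamma_{M_\ell}$ is irreducible, $Z$ is irreducible. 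I then define $\Psi\colon Z\dashrightarrow\kk^{3\times n}$ by keeping the coordinates of the points outside $A_2\cup B$ as in $V'$ (so that the points of $A_3$ land on $\ell'$), sending each $p\in A_2$ to the intersection of $\ell'$ with the line spanned by two fixed unmoved points of $m_p$, and placing the points of $B$ at the chosen positions $b_i$ on $\ell'$. This is a morphism on a dense open $Z^\circ\subseteq Z$.

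The argument then has two parts. First, $\Psi(z)\in\Gamma_M$ for generic $z\in Z^\circ$: by the construction, all collinearities prescribed by $\CCC_M$ hold on $\Psi(z)$ for every $z\in Z^\circ$, so $\Psi(z)$ always lies in the circuit variety $V_{\C(M)}$, and it lies in $\Gamma_M$ precisely when every basis of $M$ remains independent; the locus in $Z^\circ$ where some basis of $M$ degenerates is a finite union of hypersurfaces, hence closed, and it is proper once we know that $\Psi$ is dominant. Second, dominance: given a generic $V\in\Gamma_M$, I would take $\ell'$ to be the line of $V$, take the $b_i$ to be the positions of its $B$-points, and let $V'$ be $V$ with the $B$-columns deleted and each point of $A_2$ perturbed slightly along $m_p$ off of $\ell'$. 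Since the spanning points of each $m_p$ are untouched, $\ell'\cap m_p$ recovers the original position of $p$, so $\Psi(V',\ell',(b_i)_i)=V$; it only remains to check that $V'\in\Gamma_{M_\ell}$.

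That last check is where the hypothesis $\#A_3\le 2$ is genuinely used, and I expect it to be the main (though not deep) obstacle. Deleting the $B$-columns and sliding the points of $A_2$ along their own lines preserves every dependency of $M_\ell$, so the only thing at stake is the absence of extra collinearities in $V'$: the sole collinearities of $V$ not already recorded in $\CCC_{M_\ell}$ are triples of points lying on $\ell$, and such a triple contains at most the two points of $A_3$, hence at least one point of $A_2$, which has been nudged off $\ell'$ and so breaks the triple. If $\ell$ carried three or more points each on at least three lines, those would have rigid positions, could not be displaced, and the argument would collapse — which is exactly the content of the hypothesis. The remaining ingredients are the routine combinatorial bookkeeping identifying $\CCC_{M_\ell}$ with $\CCC_M$ minus $\ell$, and the degenerate case in which $\CCC_M\setminus\ell$ has rank below three should be handled separately, but there $\Gamma_{M_\ell}$ and $Z$ are still visibly irreducible and the same reasoning applies. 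Combining everything, $\Psi$ restricts to a morphism from a dense open of the irreducible $Z$ onto a dense constructible subset of $\Gamma_M$, so $\overline{\Gamma_M}$, and hence $\Gamma_M$, is irreducible.
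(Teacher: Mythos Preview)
The paper does not prove this statement: it is quoted verbatim from \cite[Theorem~4.5]{CGMM} and used as a black box in the proof of Theorem~\ref{decomposition liftable}. So there is no ``paper's own proof'' to compare against.

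That said, your sketch is a correct and standard way to establish such a result, and it is in the same spirit as the original proof in \cite{CGMM}: build an irreducible parameter space over $\Gamma_{M_\ell}$ recording the extra data (the line $\ell'$ through the at-most-two rigid points of $A_3$, and the free positions of the $B$--points), and reconstruct a realisation of $M$ by intersecting $\ell'$ with the already-realised lines $m_p$ for $p\in A_2$. Your identification of where the hypothesis $\#A_3\le 2$ enters --- namely, that a line through the unmoved $A_3$--points always exists, and that every extra collinearity on $\ell$ is broken by perturbing some $A_2$--point --- is exactly the point. The logical order you outline (first the preimage construction shows $\Psi^{-1}(\Gamma_M)$ is nonempty, then openness of $\Gamma_M$ in $V_{\C(M)}$ makes $\Psi^{-1}(\Gamma_M)$ a nonempty open, hence dense and irreducible, subset of $Z$, whose image is $\Gamma_M$) is sound and not circular, since your preimage construction in fact works for \emph{every} $V\in\Gamma_M$, not just a generic one.

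Two minor points that would need tidying in a full proof but are not genuine gaps: the realisation spaces live in $\CC^{3\times n}$, so the fibres should be taken in the affine cones (e.g.\ nonzero vectors in a $2$--plane) rather than in $(\PP^2_\CC)^\vee\times(\PP^1_\CC)^{\#B}$ --- this does not affect irreducibility; and the local triviality claim for $Z\to\Gamma_{M_\ell}$ can be replaced by the simpler observation that $Z$ is an open subset of an incidence variety that is visibly irreducible.
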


\section{(Quasi-)liftable configurations}\label{sec:circ}

In this section, we introduce the notions of liftable and quasi-liftable for point-line configurations and prove the associated varieties of such configurations are irreducible. Moreover, we present an irreducible decomposition for their circuit varieties. In particular, given a point-line configuration $\CCC$ with $n$ points, we explore the property that an $n$-tuple of collinear points can be lifted to a non-degenerate realisation of $\CCC$. Typically, this task is highly non-trivial and requires additional conditions on the coordinates of the collinear points.

\medskip
We will first introduce the notion of liftability property for point-line configurations, that play a central role in the irreducibility of the corresponding circuit variety.

\begin{definition}[Liftable configuration]\label{def: liftable} A linear point-line configuration $\CCC$ with $n$ points is called \textit{liftable} over a field $\kk$ if any $n$-tuple of distinct collinear points in $\PP^2_{\kk}$ is the image under a projection of a non-degenerate realisation of $\CCC$. Here, by non-degenerate realisation we mean the datum of an $n$-tuple of points in $\PP^2_{\kk}$, and a bijection with the points of $\CCC$ such that (non-)collinear points in $\CCC$ correspond to (non-)collinear points in $\PP^2_{\kk}$.
\end{definition}

For example, the next result shows that \textit{forest configurations} are liftable over $\CC$. We first recall their definition.
Consider a point-line configuration $\CCC = (\PPPP, \LL, \II)$ whose points $\PPPP$ are endowed with a total order $p_1 < \dots < p_n$. We associate to $\CCC$ the graph $G_{\CCC} = (\PPPP, E)$, where:
$$E = 
\left\{ 
p_ip_j \mid 
p_i, p_j \in \ell \text{ for some } \ell \in \LL,\ p_i < p_j, \text{ and for all } p_k \in \ell \text{ we do not have } p_i < p_k < p_j \right\}.$$
We define the \textit{connected components} of $\CCC$ to be set of sub-configurations corresponding to the connected components of $G_{\CCC}$. We write $\omega$ for the number of connected components of $\CCC$.
The configuration $\CCC$ is called a forest if its graph is a forest. This definition is well posed by \cite[Lemma 5.2]{CGMM}.
    
\begin{lemma}\label{forest lifting}
    Forest configurations are liftable over $\CC$.
\end{lemma}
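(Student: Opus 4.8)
The plan is to prove liftability of a forest configuration $\CCC$ by induction on the number of points $n$, building the realisation one point at a time following a linear extension of the tree structure of $G_{\CCC}$. Fix a line $\ell$ in $\PP^2_{\CC}$, a centre of projection $P \notin \ell$, and an arbitrary $n$-tuple of distinct points $q_1, \dots, q_n$ on $\ell$; I must produce a non-degenerate realisation $v_1, \dots, v_n$ of $\CCC$ with $v_i$ lying on the line $\overline{Pq_i}$ for each $i$, so that the projection from $P$ sends $v_i \mapsto q_i$. Equivalently, I get to choose one scalar parameter for each point (its position along the fibre $\overline{Pq_i}$), and I need to realise all the required collinearities in $\CCC$ while avoiding all the non-collinearities.

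The key steps: first, reduce to the connected case --- if $\CCC$ has components $\CCC_1, \dots, \CCC_\omega$, lift each separately using fibres over disjoint blocks of the $q_i$; because a forest's components are themselves trees, and because points in different components are never required to be collinear, one only needs to check the finitely many "accidental" non-collinearities across components do not force a collision, which can be arranged by a generic choice of parameters. Second, in the connected (tree) case, pick a root and process vertices in BFS/DFS order. A tree edge $p_i p_j$ of $G_{\CCC}$ comes from a line $\ell' \in \LL$ on which $p_i, p_j$ are consecutive; each line $\ell'$ of $\CCC$ corresponds to a path in the tree, so once the first two points of $\ell'$ are placed, the line $\overline{v_i v_j}$ in $\PP^2_{\CC}$ is determined, and every subsequent point $v_k$ on $\ell'$ is forced to be the intersection of this line with its fibre $\overline{Pq_k}$ --- a unique, well-defined point as long as $\overline{v_i v_j} \ne \overline{Pq_k}$ and the fibre is not contained in it. So in fact, after choosing the coordinates of the points incident to no line or of degree-one vertices freely, the whole realisation is determined; the content is that one can make these free choices so that (a) all the forced intersection points are distinct and well-defined, and (b) no spurious collinearity among triples of the $v_i$ appears beyond those demanded by $\LL$. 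Both (a) and (b) are non-vanishing conditions on finitely many polynomials in the free parameters, hence generically satisfiable over the infinite field $\CC$; using \cite[Lemma 5.2]{CGMM} one knows the free-parameter count and the tree structure are consistent, so the construction never over-determines a point.

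Concretely I would set up coordinates: place $\ell$ as a coordinate line and $P$ as a coordinate point, so that the fibre over $q_i$ is a pencil with a single affine parameter $t_i$; express each forced point $v_k$ as a rational function of the $t_i$'s of earlier points on its line, and collect the finitely many polynomials whose non-vanishing guarantees non-degeneracy. Then invoke that a nonempty Zariski-open subset of affine space over $\CC$ is nonempty to choose valid $t_i$. I expect the main obstacle to be the bookkeeping in (b): ruling out accidental collinearities. One must argue that for each triple $\{p_a, p_b, p_c\}$ not collinear in $\CCC$, the polynomial expressing "$v_a, v_b, v_c$ collinear" is not identically zero in the free parameters --- i.e., that the tree structure leaves enough genuine freedom that this degeneracy is avoidable. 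This is where the forest hypothesis is essential (cycles in $G_{\CCC}$ would create dependencies forcing such a polynomial to vanish identically), and where I would lean on \cite[Lemma 5.2]{CGMM} and an inductive argument: adding a new leaf $p$ attached along a single line contributes a new fibre parameter that appears in the collinearity polynomial for any triple involving $p$ in a way that cannot be cancelled, so non-vanishing is preserved. Handling the base case (a single point, or a configuration with no lines) and the inductive step of attaching a leaf --- possibly a leaf that extends an existing line versus one that starts a fresh pencil --- then completes the argument.
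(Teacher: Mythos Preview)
Your proposal is correct and follows essentially the same approach as the paper: both construct the lift by walking through the forest structure, placing the first point of each line freely on its fibre, choosing a direction for the line, and then forcing the remaining points on that line as intersections with their fibres. The paper's proof is a brief sketch that processes the configuration line-by-line starting from a leaf (a point with $\#\LL_{P'}=1$) and does not explicitly address the avoidance of accidental collinearities, whereas you frame the same construction as an induction on vertices in BFS/DFS order and spell out the genericity argument needed to guarantee non-degeneracy; this extra care is a welcome elaboration rather than a different method.
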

\begin{proof}Let $\CCC = (\PPPP, \LL, \II)$ be a forest configuration with $n$ points. Starting from $n$ collinear points and a projection centre $P$ in $\PP^2_{\CC}$, one can concretely construct a realisation of $\CCC$ projecting from $P$ to the $n$-tuple of given points. We pick a point $P'\in \PPPP$ such that $\LL_{P'} = 1$ (the existence of such a point is ensured by the forest assumption), and lying on a line $\ell \in \LL$. We associate $P'$ to an arbitrary point in the $n$-tuple, which is the projection image of any point in the line joining itself with the centre $P$. We fix a point on this line and we take a line $\ell'$ through it. By taking the intersections of $\ell '$ with $\# \ell$ fibres of points in the $p$-tuple, $\ell'$ becomes a realisation of $\ell$.

    The points in $\ell'$ projecting to points $Q$, such that $\LL_Q \geq 2$, give rise to branches of the configuration $\CCC$ which do not contain any other point of $\ell$, because of the forest assumption. So, we take arbitrary lines through these points and iterate the argument until the configuration $\CCC$ is fully chased. Figure~\ref{fig:for} shows how this is performed in a sample configuration.
 \begin{figure}[h]
    \centering
    \includegraphics[scale=0.3]{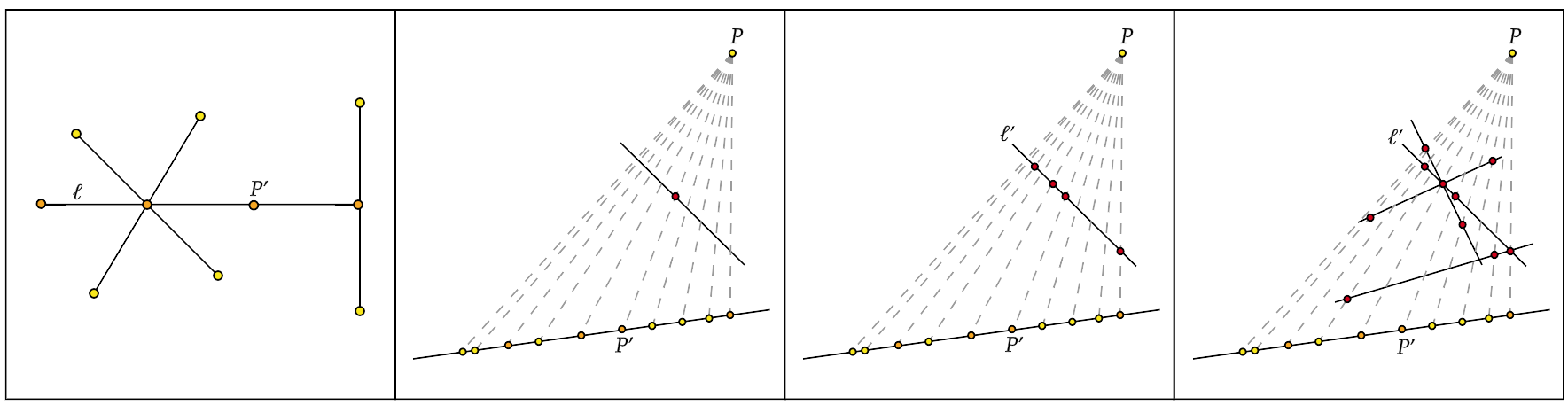}
    \caption{From left to right the figure shows a forest planar configuration with ten points and how it is realised starting from ten collinear points, following the proof of Lemma \ref{forest lifting}.}
    \label{fig:for}
\end{figure}
   \end{proof}

\subsection{Realisation space of liftable configurations}
The liftability problem involves uniquely realisable configurations. Although the general setting can be formulated for all kind of configurations. Now, for any point-line configuration $\CCC$ with $n$ points, we introduce an algebraic tool that plays a crucial role in the liftability problem. In particular, it gives insights about whether $\CCC$ is (quasi-)liftable and the conditions that $n$ collinear points must satisfy so that they may be lifted to a non-degenerate realisation of $\CCC$. For the following construction, we keep in mind the purpose of checking the liftability of a point-line configuration over $\CC$.

\begin{construction}[Collinearity matrix]\label{collinearity matrix}
 Let $\CCC = (\PPPP, \LL, \II)$ be a point-line configuration with $n = \#\PPPP$ points and let $\kk$ be a field. Consider an $n$-tuple of collinear points $P_1, \dots, P_n$ in the projective plane $\PP^2_{\kk}$. Let $P$ be a point that is not collinear with $P_1, \dots, P_n$. After a change of coordinates, we may assume the points lie on the line $z = 0$ and we may write
 \[
    P = \left(\begin{smallmatrix}
    0 \\ 0 \\ 1
    \end{smallmatrix} \right)
    \text{ and }
    P_1 = \left(\begin{smallmatrix}
    x_1 \\ 1 \\ 0
    \end{smallmatrix} \right), \dots, P_n = \left(\begin{smallmatrix}
    x_n \\ 1 \\ 0
    \end{smallmatrix} \right)
    \text{ for some } x_i \in \kk.
\]

The problem is to find $z_1, \dots, z_n \in \kk$ such that the points $\left(\begin{smallmatrix}
    x_i \\ 1 \\ z_i
\end{smallmatrix} \right)$ form a realisation of $\CCC$.

We denote by $[P_iP_j]$ the $2 \times 2$ minor $x_i-x_j$. The \textit{collinearity matrix} $\Lambda$ of $\CCC$ encodes the collinearity conditions imposed by the configuration. The columns of $\Lambda$ are indexed by the points $P_j$ of $\CCC$ for $j \in [n]$, and the rows of $\Lambda$ are indexed by each triple $i = (i_1, i_2, i_3)$, where $P_{i_1}, P_{i_2}, P_{i_3}$ are collinear points. The entries of $\Lambda$ are given by:
\[
(\Lambda)_{i,j} = \begin{cases}
     [P_{i_2}P_{i_3}] & \text{if } j = i_1, \\
    -[P_{i_1}P_{i_3}] & \text{if } j = i_2, \\
     [P_{i_1}P_{i_2}] & \text{if } j = i_3, \\
    0 & \text{otherwise}.
\end{cases}
\]
Suppose that $\ell \subseteq \PPPP$ is a set of collinear points with $\#\ell > 3$. For each $3$-subset of $\ell$, the above construction gives a row of $\Lambda$. These rows are linearly dependent. Moreover, it is straightforward to show that the set of such rows has rank $\# \ell - 2$.
\hfill $\diamond$
\end{construction}

\begin{definition}[Space of lifts, trivial and degenerate liftings]
Let $\CCC$ be a linear point-line configuration with $n$ points. We follow the notation from Construction~\ref{collinearity matrix} for the collinearity matrix $\Lambda$. The matrix $\Lambda$ defines the linear system:
\begin{equation}\label{lifting system} \Lambda \begin{pmatrix} z_1 \\ \vdots \\ z_n \end{pmatrix} = \begin{pmatrix} 0 \\ \vdots \\ 0 \end{pmatrix}.\end{equation} 
The solution space of~\eqref{lifting system}, denoted as $\LLL_{\CCC}$, is the \textit{space of lifts} for $\CCC$.

For each $z \in \LLL_{\CCC}$, we obtain a \textit{lifted configuration} of points $P_i$ with coordinates  $\left(\begin{smallmatrix}
    x_i\\ 1 \\ z_i
\end{smallmatrix}\right)$.

By construction, these points realise all the collinearity conditions of $C$. We call $z$ \textit{trivial} if these lifted points are collinear. We say $z$ is a \textit{degenerate lift} if these lifted points are not a realisation of $\CCC$ but do not lie on a single line.
\end{definition}

\begin{lemma}\label{dimesion space of lifts}
    The linear system~\eqref{lifting system} has non-trivial solutions if and only if $\dim(\LLL_{\CCC}) \geq 3$.
\end{lemma}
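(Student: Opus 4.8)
The plan is to pin down explicitly the subspace of $\LLL_{\CCC}$ consisting of the trivial solutions, show it always has dimension exactly $2$, and then observe that a non-trivial solution exists precisely when $\LLL_{\CCC}$ is strictly larger, i.e.\ has dimension at least $3$.

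First I would describe which $z = (z_1, \dots, z_n)$ are trivial. By definition, $z$ is trivial when the lifted points $\left(\begin{smallmatrix} x_i \\ 1 \\ z_i \end{smallmatrix}\right)$ are collinear. All of these points lie in the affine chart where the second coordinate is non-zero, with affine coordinates $(x_i, z_i)$, and the base points $P_1, \dots, P_n$ being distinct forces the $x_i$ to be pairwise distinct; hence a line through two lifted points cannot be "vertical", and the lifted points are collinear if and only if $z_i = \alpha x_i + \beta$ for all $i$, for some $\alpha, \beta \in \kk$. Therefore the set $T$ of trivial solutions is the subspace $\Span\big((x_1, \dots, x_n),\,(1, \dots, 1)\big) \subseteq \kk^n$, and since not all $x_i$ coincide these two vectors are linearly independent, so $\dim T = 2$.

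Next I would verify $T \subseteq \LLL_{\CCC}$. Conceptually this is clear: if the lifted points lie on a common line, then in particular every collinear triple of $\CCC$ is sent to a collinear triple, so all equations of the system~\eqref{lifting system} hold. One can also check it directly: substituting $z_j = \alpha x_j + \beta$ into the row of $\Lambda$ indexed by a collinear triple $(i_1, i_2, i_3)$ and using $[P_aP_b] = x_a - x_b$, the left-hand side collapses to $0$ after expanding. Hence $T \subseteq \LLL_{\CCC}$, which also shows $\dim \LLL_{\CCC} \ge 2$ unconditionally, and $T$ is exactly the set of trivial elements of $\LLL_{\CCC}$ (by the characterisation of the previous paragraph, which does not refer to $\LLL_{\CCC}$). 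Consequently the system~\eqref{lifting system} admits a non-trivial solution if and only if $\LLL_{\CCC} \neq T$, and since $T \subseteq \LLL_{\CCC}$ with $\dim T = 2$, this is equivalent to $\dim \LLL_{\CCC} \ge 3$.

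The argument is essentially bookkeeping; the one step I would be careful about is the translation "lifted points collinear" $\Longleftrightarrow$ "$z_i = \alpha x_i + \beta$ for some $\alpha,\beta$", where distinctness of the base points is precisely what rules out a vertical line and keeps $T$ two-dimensional. The remaining verification $T \subseteq \LLL_{\CCC}$ is a one-line determinant expansion, so I do not expect any genuine obstacle here.
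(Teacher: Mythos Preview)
Your proof is correct and follows the same approach as the paper: identify the trivial lifts as a $2$-dimensional subspace of $\LLL_{\CCC}$ and conclude. The paper's proof is a two-sentence assertion that the trivial liftings contribute exactly $2$ dimensions, while you spell out explicitly why (the characterisation $z_i=\alpha x_i+\beta$ and the role of distinctness of the $x_i$), which is a welcome clarification of what the paper leaves implicit.
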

\begin{proof}
    Trivial liftings of the $n$ points contribute 2 dimensions to the dimension of $\LLL_{\CCC}$. Therefore, a non-trivial lift exists if and only if the solution space has dimension at least three.
\end{proof}
  
This leads to an efficient method to check if an arrangement is not liftable.

\begin{example}
 By applying Lemma~\ref{dimesion space of lifts} to the collinearity matrices of the quadrilateral set and the $3 \times 4$ grid, in the proofs of Theorems~\ref{TFAE quad set} and~\ref{TFAE 3x4 grid}, we observe that they are not liftable.
\hfill $\diamond$
\end{example}
 
For the following family of point-line configurations, the fact that System~\eqref{lifting system} has solution space of dimension $\geq 3 \cdot \omega$ completely solves the liftability problem. 

\begin{definition}[Maximal matroid]
    Let $M$ be a simple matroid of rank $3$, whose configuration $\CCC_M$ has no triplets of concurrent lines. We say that $M$ is \textit{maximal} if it is maximal, with respect to the dependency order, among the realisable simple matroids of rank $3$ with no triplets of concurrent lines.
\end{definition}

\begin{example}\label{example:non-maximal}
    The underlying simple matroids of the quadrilateral set and the $n \times m$ grid configurations are maximal. This can be deduced by how the property of being maximal reflects on the point-line configuration associated to the matroid.
    In particular, the fact that there are no simple rank-three matroids $N > M$ over $[n]$, with $\CCC_N$ having no triplets of concurrent lines, is equivalent to the fact that there is no linear point-line configuration $\CCC' \neq \CCC_{M}$, with $n$ different points, satisfying conditions:
    \begin{itemize}
        \item all the collinearities (i.e., triplets of collinear points) of $\CCC_M$ are collinearities of $\CCC'$;
        \item the points of $\CCC'$ do not lie on a single line.
    \end{itemize}
    In other words, the underlying simple matroids of the quadrilateral set and the $n \times m$ grid configurations are maximal because any non-trivial lifting of them in $\PP^2_{\CC}$ is a realisation of the matroid.
    
    Differently, the simple matroid $M$ over $[12]$, whose circuits are $\C(M) = \min\left(\Delta \cup \binom{[12]}{4}\right)$, where $\Delta = \binom{[6]}{3}\cup \{178, 289, 379, 4\,10\,11, 5\,10\,12, 6\,11\,12\}$, is not maximal. Indeed, one can consider, for instance, the simple matroid $M_1$ over $[12]$, whose circuits are $\C(M_1) = \min\left(\Delta_1 \cup \binom{[12]}{4}\right)$, where $\Delta_1 = \binom{[9]}{3}\cup \{4\,10\,11, 5\,10\,12, 6\,11\,12\}$. As $\C(M) \subsetneq \C(M_1)$, we have that $M_1 > M$. In particular, $M$ is contained in the two maximal matroids $M_1$ and $M_2$, whose configurations are depicted in Figure~\ref{fig:non-maximal simple}. Their maximality can be verified by the method mentioned at the beginning of the Example. In particular, both of them have triplets of non-collinear points but they are not a realisation of $M$. Adding further dependencies on $M_1$ and $M_2$ would lead to a non-simple matroid or to a rank drop (see matroid $M_3$, in Figure~\ref{fig:non-maximal simple}).
        \begin{figure}[h]
        \centering
        \includegraphics[scale=0.4]{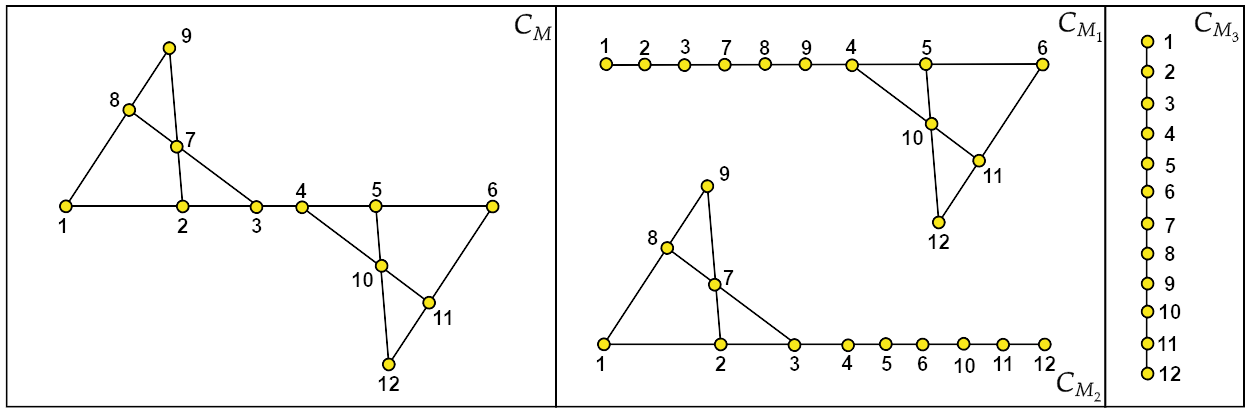}
        \caption{In the notation of Example~\ref{example:non-maximal}, from left to right there are the linear point-line configurations of the matroids $M$, $M_1$, $M_2$, and $M_3$.}
        \label{fig:non-maximal simple}
    \end{figure}
    \hfill $\diamond$
\end{example}

\begin{lemma}\label{characterization liftability}
    Let $\CCC = (\PPPP, \LL, \II)$ be a linear point-line configuration with $n$ points, with  connected components $\CCC_1, \dots, \CCC_{\omega}$. Assume that $\CCC_1, \dots, \CCC_{\omega}$ derive from a maximal matroid. Then, the following statements are equivalent.
    \begin{itemize}
        \item[$\bullet$] The configuration $\CCC$ is liftable.
        \item[$\bullet$] The linear system $\Lambda z = 0$, where $\Lambda$ is the collinearity matrix of $\CCC$, has solution space of dimension at least $3\cdot \omega$, or equivalently $n -3\cdot \omega \geq \rk \Lambda$.
    \end{itemize}
\end{lemma}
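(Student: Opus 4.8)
The plan is to prove the equivalence by first reducing to the connected case and then using the maximality hypothesis to identify non-trivial lifts with honest realisations. Since every line of $\CCC$ is contained in one of the connected components $\CCC_1,\dots,\CCC_\omega$, each row of the collinearity matrix $\Lambda$ is supported on the columns of a single component, so $\Lambda$ is block-diagonal and $\LLL_\CCC = \bigoplus_{i=1}^\omega \LLL_{\CCC_i}$, whence $\dim \LLL_\CCC = \sum_{i=1}^\omega \dim\LLL_{\CCC_i}$. I would also record that $\CCC$ is liftable if and only if each $\CCC_i$ is liftable: one direction follows by padding an $n_i$-tuple of distinct collinear points with extra points on the same line, lifting all of $\CCC$, and restricting — a non-degenerate realisation of $\CCC$ restricts to a non-degenerate realisation of each $\CCC_i$, because the lines of $\CCC$ meeting a component are exactly the lines of that component; the other direction by lifting each component independently from a common centre $P$ and choosing the lifts generically, so that no accidental collinearity occurs among points of different components, which is possible since such collinearities are never forced as the components move independently.

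With this reduction it suffices to treat a connected configuration $\CCC$ arising from a maximal matroid $M$ and to show it is liftable if and only if $\dim\LLL_\CCC \geq 3$. For the implication liftable $\Rightarrow$ $\dim\LLL_\CCC\geq 3$: a realisation projecting to a given $n$-tuple of distinct collinear points yields a vector $z\in\LLL_\CCC$, and $z$ is non-trivial because $M$ has rank $3$; hence $\LLL_\CCC$ strictly contains the two-dimensional subspace of trivial lifts (cf. the proof of Lemma~\ref{dimesion space of lifts}), so $\dim\LLL_\CCC\geq 3$. For the converse, fix any $n$-tuple of distinct collinear points and a centre $P$; since $\dim\LLL_\CCC\geq 3$, pick $z\in\LLL_\CCC$ outside the trivial subspace, and the corresponding lifted points realise every collinearity of $\CCC$ without lying on a single line. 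This is where \emph{maximality} enters, exactly as in the discussion of Example~\ref{example:non-maximal}: the matroid $N$ of the lifted points satisfies $M \le N$, is simple of rank $3$, and — because $\CCC_M$ has no triplet of concurrent lines and no new line can bring three lines through a common point without forcing an additional collinearity — has no triplet of concurrent lines either; maximality of $M$ then forces $N=M$, so the lifted points form a non-degenerate realisation of $\CCC$ projecting from $P$ to the prescribed tuple.

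It remains to see that, in the multi-component setting, ``each $\dim\LLL_{\CCC_i}\geq 3$'' is the same condition as ``$\dim\LLL_\CCC \geq 3\omega$''. That $\sum_i \dim\LLL_{\CCC_i}\geq 3\omega$ when each summand is $\geq 3$ is immediate; for the converse I would use that a connected configuration $\CCC_i$ coming from a maximal matroid $M_i$ has $\dim\LLL_{\CCC_i}\le 3$. To prove this, observe that when $\CCC_i$ is liftable the projection-from-$P$ map $\Gamma_{M_i}\to(\PP^1)^{n_i}$ is dominant, so by irreducibility of $\Gamma_{M_i}$ (Theorem~\ref{decomposition liftable}) a fibre-dimension count gives $\dim\LLL_{\CCC_i} = \dim\Gamma_{M_i} - n_i$; and maximality of $M_i$ forbids $\dim\Gamma_{M_i} > n_i + 3$, since otherwise a generic realisation would retain enough freedom to make some non-forced triple collinear while staying simple, of rank $3$, and free of concurrent triplets of lines, contradicting maximality. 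When $\CCC_i$ is not liftable, $\dim\LLL_{\CCC_i}=2$ by the connected case. Hence each $\dim\LLL_{\CCC_i}\in\{2,3\}$, and $\sum_i\dim\LLL_{\CCC_i}\ge 3\omega$ forces every summand to equal $3$, completing the argument.

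The main obstacle is the double use of maximality: first, identifying every non-trivial lift of a connected maximal configuration with a realisation, where one must genuinely exclude ``degenerate'' lifts whose extra incidences are precisely what would create three concurrent lines, since such lifts are not ruled out by maximality on its own; and second, the dimension bound $\dim\Gamma_{M_i}\le n_i+3$, where the delicate point is showing that surplus moduli can always be spent on a single new collinearity without triggering a cascade of further collinearities or a concurrence. Everything else — the block structure of $\Lambda$, the two-dimensional space of trivial lifts, and the bookkeeping across components — is routine.
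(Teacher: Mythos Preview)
Your strategy is the paper's: block-diagonalise $\Lambda$ along the connected components, apply Lemma~\ref{dimesion space of lifts} to each block, and use maximality to identify non-trivial lifts with realisations. The paper's entire proof is the single line ``Follows from Lemma~\ref{dimesion space of lifts}'', so most of what you have written is an expansion of what the authors leave implicit.

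Two comments on the obstacles you flag. The first --- that a non-trivial lift might produce three concurrent lines, so that the formal definition of maximality does not bite --- is handled in the paper not by argument but by the rephrasing in Example~\ref{example:non-maximal}: there it is asserted that $M$ is maximal exactly when every non-collinear $n$-tuple satisfying all the collinearities of $\CCC_M$ is already a realisation of $M$. For the purposes of this lemma you may simply invoke that reformulation; whether the paper justifies the equivalence adequately is a separate question.

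The second obstacle is real. For $\omega>1$ the passage from $\sum_i\dim\LLL_{\CCC_i}\geq 3\omega$ to ``each $\dim\LLL_{\CCC_i}\geq 3$'' genuinely needs the upper bound $\dim\LLL_{\CCC_i}\leq 3$, and the paper nowhere supplies it; its one-line proof reads as if $\omega=1$. Your attempted bound via $\dim\Gamma_{M_i}\leq n_i+3$ is the right shape but, as you concede, incomplete: it is not clear that excess moduli can always be absorbed into a single new collinearity without triggering concurrences. So here you have exposed a lacuna in the paper rather than in your own reasoning; the cleanest fix is to state the lemma with ``each $\dim\LLL_{\CCC_i}\geq 3$'' in place of the aggregate $3\omega$, which is what both proofs actually establish.
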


\begin{proof}
    Follows from Lemma~\ref{dimesion space of lifts}.
\end{proof}

\begin{example}\label{lifting 3x3}
Let $1, \dots, 9$ be collinear points in the projective plane. Then, it is always possible to construct a realisation of a $3\times3$ grid such that the 9 points are a projective image of the grid itself. In other words, $3 \times 3$ grids are liftable.
By assuming the incidence structure in Figure~\ref{fig:3x3g}, the collinearity matrix of a $3 \times 3$ grid is:
$$ \Lambda = \left( \begin{matrix}
[23] & -[13] & [12] & 0 & 0 & 0 & 0 & 0 & 0\\
[47] & 0 & 0 & -[17] & 0 & 0 & [14] & 0 & 0\\
0 & [58] & 0 & 0 & -[28] & 0 & 0 & [25] & 0\\
0 & 0 & [69] & 0 & 0 & -[39] & 0 & 0 & [36] \\
0 & 0 & 0 & [56] & -[46] & [45] & 0 & 0 & 0\\
0 & 0 & 0 & 0 & 0 & 0 & [89] & -[79] & [78]
\end{matrix} \right)$$
As a consequence, the linear system $\Lambda (z_1 \dots z_9)^t = (0 \dots 0)^t$ has a solution space of dimension at least 3. In other words, it is possible to choose $z_1, \dots, z_9$ such that
$$\begin{cases}
[23] \cdot z_1 -[13] \cdot z_2 + [12] \cdot z_3 = [123] = 0\\
[47] \cdot z_1 -[17] \cdot z_4 + [14] \cdot z_7 = [147] = 0\\
[58] \cdot z_2 -[28] \cdot z_5 + [25] \cdot z_8 = [258] = 0 \\
[69] \cdot z_3 -[39] \cdot z_6 + [36] \cdot z_9 = [369] = 0 \\
[56] \cdot z_4 -[46] \cdot z_5 + [56] \cdot z_4 = [456] = 0\\
[89] \cdot z_7 -[79] \cdot z_8 + [78] \cdot z_9 = [789] = 0
\end{cases}$$
and such that the points $\left(\begin{smallmatrix}
    x_i \\ 1 \\ z_i
\end{smallmatrix} \right)$ for $i = 1, \dots, 9$ span the whole projective plane. In turn, by Lemma~\ref{characterization liftability}, there exists a non-degenerate $3 \times 3$ grid whose image via the projection through $P$ on the line $r$ consists exactly of points $1, \dots, 9$. \hfill $\diamond$
\end{example}

\begin{remark}
    The liftability property is preserved by certain operations on point-line configurations. If $\CCC$ is liftable, then $\CCC \setminus \ell$ (if realisable) is still liftable for any $\ell \in \LL$. In addition, adding a point to a line (with no extra collinearity requirement) does not affect liftability. In this case, both the number of points and the rank of the collinearity matrix increase by one. \hfill $\diamond$
\end{remark}

\begin{lemma}\label{perturbation}
    Let $M$ be a simple matroid of rank 3 over $[n]$, whose associated point-line configuration $\CCC_M$ is liftable. Then, any realisation of the 2-uniform matroid over $[n]$, in $\CC^{3n}$, is an element of the matroid variety $V_M$. 
\end{lemma}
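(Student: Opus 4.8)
The plan is to show that a generic point of $V_M$, namely the matroid variety of a simple rank-3 matroid with liftable configuration, is a limit of realisations that degenerate so that all $n$ points become collinear; equivalently, that the realisation space $\Gamma_U$ of the $2$-uniform matroid $U$ on $[n]$ (realised in $\CC^{3\times n}$, i.e.\ $n$ distinct points in $\PP^2_\CC$ with no further dependencies assumed) is contained in $V_M = \overline{\Gamma_M}$. Since $V_M$ is closed and $\Gamma_U$ is irreducible, it suffices to exhibit a \emph{single} realisation $V \in \Gamma_U$ whose $\GL_3$-orbit closure meets $V_M$, or better, to construct for each $V \in \Gamma_U$ a one-parameter family of realisations of $M$ limiting to $V$. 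The liftability hypothesis is exactly what produces such a family.

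First I would set up coordinates as in Construction~\ref{collinearity matrix}: given an arbitrary configuration of $n$ distinct points in $\PP^2_\CC$, one can place them on the line $z=0$ as $P_i = \left(\begin{smallmatrix} x_i \\ 1 \\ 0\end{smallmatrix}\right)$ only when they happen to be collinear, so instead the right move is to observe that it is enough to prove the statement for \emph{collinear} $n$-tuples and then use that $\Gamma_U$ is irreducible together with $\GL_3$-equivariance — but actually the cleanest route is: by Definition~\ref{def: liftable}, for collinear distinct points $P_1,\dots,P_n$ on a line $\ell$ there is a non-degenerate realisation $Q_1,\dots,Q_n$ of $\CCC_M$ and a projection $\pi$ from a centre $P \notin \ell$ with $\pi(Q_i) = P_i$. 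Now scale: consider $Q_i^{(t)}$ obtained by moving $Q_i$ along the line $\overline{P Q_i}$ toward $P_i$, concretely $Q_i^{(t)} = \left(\begin{smallmatrix} x_i \\ 1 \\ t z_i\end{smallmatrix}\right)$ in the coordinates where $P = \left(\begin{smallmatrix} 0\\0\\1\end{smallmatrix}\right)$ and the $P_i$'s and $z_i$'s are as in the space-of-lifts description. For all $t \neq 0$ this is the realisation $Q_i$ rescaled in the last coordinate, hence projectively equivalent (via $\diag(1,1,t^{-1})$) to the fixed non-degenerate realisation, so it lies in $\Gamma_M$; as $t \to 0$ it converges to $P_i$.

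The key points to verify are: (i) that $\diag(1,1,t^{-1})$ indeed carries a realisation of $M$ to a realisation of $M$ — immediate, as $\GL_3$ acts on $\Gamma_M$; (ii) that the limit as $t\to 0$ of $Q_i^{(t)}$ is exactly $P_i$, hence lies in $\Gamma_U$ and, running the construction over all $P_i$, that every collinear $n$-tuple of distinct points arises this way — this is precisely liftability; and (iii) that $V_M$, being Zariski closed, contains these limits. Combining, every collinear $n$-tuple lies in $V_M$, and since the set of collinear $n$-tuples of distinct points is Zariski dense in $\Gamma_U$ — indeed $\Gamma_U$ is irreducible and the collinear locus, while not dense, has closure meeting every component appropriately; more robustly, since $V_M$ is $\GL_3$-invariant and closed and contains the collinear configurations, and an arbitrary $V \in \Gamma_U$ is a limit of configurations obtained by projecting liftable realisations, we get $\Gamma_U \subseteq V_M$.

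The main obstacle is point (ii)/density: liftability as stated only guarantees that \emph{collinear} $n$-tuples are projections of realisations, whereas $\Gamma_U$ contains non-collinear configurations as well, so the final step needs care. I expect to resolve this by noting that the collinear-tuple locus is irreducible of the expected dimension and its $\GL_3$-translates, together with the limit argument, sweep out a closed set that must be all of $V_M$ by dimension count (using $\dim \Gamma_M = \dim V_M$ and comparing with $\dim \Gamma_U = 3n$ via the projection having positive-dimensional fibres $\LLL_{\CCC}$ from Lemma~\ref{dimesion space of lifts}); alternatively, one argues directly that $V_M$ contains a dense subset of $\Gamma_U$ and invokes irreducibility of $\Gamma_U$ together with the fact that $\overline{\Gamma_U} = \Gamma_U$ is itself the relevant matroid variety, so any closed set containing a dense subset of it contains all of it.
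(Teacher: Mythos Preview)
Your core scaling argument is correct and is exactly what the paper does: given collinear points $P_i = (x_i,1,0)^t$, liftability furnishes $z_i$ with $(x_i,1,z_i)^t$ a realisation of $M$, and then $(x_i,1,tz_i)^t$ for $t\neq 0$ is still a realisation of $M$ (multilinearity of the relevant $3\times 3$ minors, or equivalently the $\GL_3$-action by $\diag(1,1,t)$) and limits to $P_i$ as $t\to 0$; hence every Euclidean neighbourhood of the collinear configuration meets $\Gamma_M$, and the configuration lies in $V_M$.

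The ``main obstacle'' you flag is not an obstacle, because you have misidentified $\Gamma_U$. A realisation of the $2$-uniform matroid $U_{2,n}$ in $\CC^{3\times n}$ is a matrix whose columns are pairwise independent \emph{and} every triple of columns is linearly dependent (since $3$-subsets are dependent in $U_{2,n}$). In projective terms this is precisely $n$ distinct \emph{collinear} points in $\PP^2_\CC$; there are no non-collinear configurations in $\Gamma_U$. Your parenthetical ``with no further dependencies assumed'' describes the $3$-uniform matroid, not the $2$-uniform one. Once this is corrected, your entire last paragraph (density, $\GL_3$-sweeping, dimension counts) is unnecessary: the scaling argument already handles every point of $\Gamma_U$ directly, and the proof ends where your second paragraph ends.
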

\begin{proof}
    Let $C \in \CC^{3n}$ be a realisation of the 2-uniform matroid over $[n]$. Then the coordinates $C_{1,1}, \dots, C_{3,n}$ of $C$ can be seen as the $(x,y,z)$-coordinates of $n$ collinear points in the projective plane. These can be represented with the $3 \times n$ matrix below, up to performing a change of coordinates in $\PP^2_{\CC}$ - as in Construction~\ref{collinearity matrix} - which yields, in turn, a change of coordinates on $\CC^{3n}$.
    \[ C = \left( \begin{matrix} C_{1,1} & C_{1,2} & \dots & C_{1,n} \\
        C_{2,1} & C_{2,2} & \dots & C_{2,n} \\
        C_{3,1} & C_{3,2} & \dots & C_{3,n}
    \end{matrix} \right) = \left( \begin{matrix} x_1 & x_2 & \dots & x_n \\
        1 & 1 & \dots & 1 \\
        0 & 0 & \dots & 0
    \end{matrix} \right).\]
        The liftability of $\CCC_M$ ensures the existence of $z_1, \dots, z_n \in \CC$, such that the points of coordinates $(x_i \, 1 \, z_i)^t$, for $i = 1, \dots, n$, are a non-degenerate realisation of $M$. Now, let $\varepsilon$ be an arbitrary positive real number and set $z:= \max_{i = 1, \dots, n} |z_i|$. Let $D \in \CC^{3n}$ be the point of coordinates: 
    \[ D = \left( \begin{matrix} x_1 & x_2 & \dots & x_n \\
        1 & 1 & \dots & 1 \\
        \frac{\varepsilon}{nz}z_1 & \frac{\varepsilon}{nz}z_2 & \dots & \frac{\varepsilon}{nz}z_n
    \end{matrix} \right).\]
    The point $D$ is still a realisation of $M$ because $\Gamma_M$ is a semi-algebraic set defined by the (non-)vanishing of determinants of $3 \times 3$ matrices whose columns are $(x,y,z)$-coordinates of points in the projective plane. In our case, by multilinearity of determinants:
    $$ \left[ \begin{matrix} x_i & x_j & x_k \\
        1 & 1 & 1 \\
        \frac{\varepsilon}{nz}z_i & \frac{\varepsilon}{nz}z_j & \frac{\varepsilon}{nz}z_k
    \end{matrix} \right] = \frac{\varepsilon}{nz} \left[ \begin{matrix} x_i & x_j & x_k \\
        1 & 1 & 1 \\
        z_i & z_j & z_k
    \end{matrix} \right], \qquad \hbox{for any } 1 \leq i < j < k \leq n.$$
    As the left-hand-side vanishes if and only if the right-hand-side vanishes, this proves that any Euclidean open subset of $\CC^{3n}$ containing $C$ intersects $\Gamma_M$. In turn, this implies that any Zariski open subset containing $C$ intersects $\Gamma_M$, thus $C \in \overline{\Gamma_M} = V_M$.
\end{proof}

\begin{definition}[Quasi-liftable configuration]~\label{def: quasi-liftable} A linear point-line configuration $\CCC = (\PPPP, \LL, \II)$ is called quasi-liftable if $\CCC$ is not liftable but, $\CCC \setminus \ell$ is liftable for every $\ell \in \LL$.
\end{definition}

\begin{example}
Our configurations of interest, namely the quadrilateral set and the $3 \times 4$ grid, are quasi liftable (see Figures~\ref{fig:qs} and~\ref{fig:3x4g}). However, when a line is added to a quasi liftable configuration, the quasi-liftability property is not preserved. \hfill $\diamond$
\end{example}

\begin{figure}

\centering
\hspace{30pt} \subfloat[$3 \times 3$ Grid]{\label{fig:3x3g}
\centering
\includegraphics[scale = 0.3]{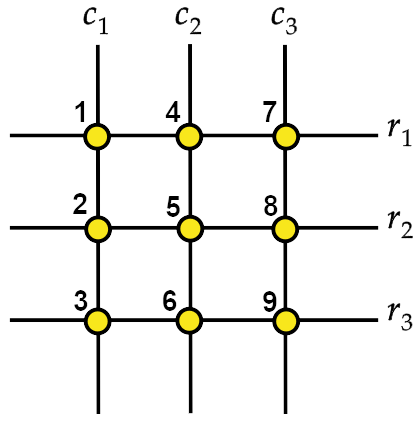}
}
\hfill
\subfloat[Quadrilateral set]{\label{fig:qs}
\centering
\includegraphics[scale = 0.3]{quadset.png}
}
\hfill
\subfloat[$3 \times 4$ Grid]{\label{fig:3x4g}
\centering
\includegraphics[scale = 0.3]{3x4grid.png}
} \hspace{30pt}
\caption{Examples of liftable and quasi-liftable plane arrangements.}
\label{fig:exa}
\end{figure}

\subsection{Irreducible decomposition of varieties of (quasi-)liftable configurations}
\label{sec:decompose}
In this section, we prove the irreducibility of the matroid varieties of liftable and quasi-liftable point-line configurations, assuming there are no triplets of concurrent lines in the configuration. Specifically, in Theorem~\ref{decomposition liftable}, we establish the irreducibility of the matroid variety in both cases. However, for the circuit variety, we demonstrate that while it is irreducible for the liftable configuration (Theorem~\ref{circuit variety of liftable conf.}), the circuit variety of the quasi-liftable matroid has two irreducible components (Corollary~\ref{decomposition quasi-liftable}).

\medskip 
In this section, unless stated otherwise, we assume that all configurations are for simple matroids of rank three, and
   preserve their realisability when considering a sub-configuration.

\medskip
We first prove the irreducibility of the matroid and circuit varieties of liftable configurations.

\begin{theorem}\label{decomposition liftable}
    Let $M$ be a matroid, whose associated point-line configuration $\CCC_M$ has no triplets of concurrent lines. Then, the matroid variety $V_{M}$ is irreducible.
\end{theorem}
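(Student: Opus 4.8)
The plan is to reduce the statement to the irreducibility criteria already available in the excerpt — namely Proposition~\ref{six-lines} and Theorem~\ref{irreducibility} — by an induction on the number of lines of $\CCC_M$, combined with the liftability machinery of this section. Observe first that since $\CCC_M$ has no triplet of concurrent lines, every point of the configuration lies on a bounded number of lines; more to the point, for \emph{any} line $\ell$ of $\CCC_M$, the set $\{p \in \ell \mid \#\LL_p \ge 3\}$ has size at most two, because two such points on $\ell$ together with a third would force three lines through a common point or two lines sharing two points (contradicting linearity). So the hypothesis of Theorem~\ref{irreducibility} is satisfied by every line of the configuration. This is the key structural observation that makes the induction go through: deleting \emph{any} line $\ell$ from $\CCC_M$ produces a configuration $\CCC_M \setminus \ell$ which (after simplification) is again a simple rank-$3$ configuration with no triplet of concurrent lines, and whose matroid variety irreducibility would, by Theorem~\ref{irreducibility}, imply that of $V_M$.

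Next I would set up the induction. The base case is when $\CCC_M$ has at most six lines, where Proposition~\ref{six-lines} gives irreducibility directly. (One should also dispose of the trivial case where $\CCC_M$ has no lines at all — then $M$ is the uniform matroid $U_{3,n}$ and $V_M$ is a determinantal variety, hence irreducible; and the case of a single line reduces to a product of such.) For the inductive step, with $\CCC_M$ having $k \ge 7$ lines, pick any line $\ell$ and form $\CCC_M \setminus \ell$, which has $k-1$ lines, no triplets of concurrent lines, and by the blanket assumption of the section remains realisable. By the inductive hypothesis $\Gamma_{M_\ell}$ (equivalently $V_{M_\ell}$) is irreducible, and then Theorem~\ref{irreducibility} yields that $\Gamma_M$ is irreducible, whence $V_M = \overline{\Gamma_M}$ is irreducible. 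Note that we do not even need liftability of $\CCC_M$ for this argument — the hypothesis "no triplet of concurrent lines" alone drives it — which matches the statement of the theorem.

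The one place that needs care, and which I expect to be the main obstacle, is verifying that Theorem~\ref{irreducibility} applies at every stage: namely that after deleting $\ell$ and simplifying, we genuinely land back in the class "simple rank-$3$ matroid with point-line configuration having no triplet of concurrent lines," and that the realisability is not lost. Deleting a line can cause points that lay only on $\ell$ to disappear, and can cause formerly-distinct lines to become "ghost lines"; one must check that no new concurrence of three lines is created by this process (it cannot be, since deleting elements only removes collinearities, never adds them) and that the resulting smaller configuration is still realisable over $\CC$ — this is exactly the standing assumption "preserve their realisability when considering a sub-configuration" made at the start of Section~\ref{sec:decompose}. Once these bookkeeping points are confirmed, the induction closes. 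An alternative, if one prefers to stay closer to the liftability theme, is to invoke Lemma~\ref{perturbation}: liftability of $\CCC_M$ forces every realisation of $U_{2,n}$ to lie in $V_M$, pinning down the "degenerate" locus and allowing one to argue that $\Gamma_M$ is the image of an irreducible parameter space (collinear configurations plus lifting data, whose space of lifts $\LLL_{\CCC}$ is a linear space by Construction~\ref{collinearity matrix}); but the inductive argument via Theorem~\ref{irreducibility} is cleaner and covers the non-liftable case as well.
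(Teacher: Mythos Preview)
Your proposal is correct and follows essentially the same route as the paper: base case via Proposition~\ref{six-lines} for configurations with at most six lines, then repeated application of Theorem~\ref{irreducibility} to add lines back one at a time, noting that the hypothesis of that theorem is automatic here. One minor point: your justification that $\#\{p\in\ell\mid \#\LL_p\ge 3\}\le 2$ is overcomplicated and the stated reason does not actually work --- the assumption ``no triplet of concurrent lines'' means precisely that $\#\LL_p\le 2$ for \emph{every} point $p$, so this set is in fact empty, which is exactly the observation the paper makes.
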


\begin{proof}
If $\CCC_M$ has less than 6 lines, then $V_M$ is irreducible by  Proposition~\ref{six-lines}. 
Assume that $\CCC_M$ has more than 6 lines.  Consider a sub-configuration $\CCC' = \CCC_M \setminus \{\ell_1, \dots,\ell_{\# \LL_M - 6} \}$, for $\{\ell_1, \dots,\ell_{\# \LL_M - 6} \} \subseteq \LL_M$. Now, let $M'$ be the corresponding matroid. Then, $\Gamma_{M'}$ is irreducible with respect to the Zariski topology, as $V_{M'}$ is irreducible by Proposition~\ref{six-lines} and $\Gamma_{M'}$ is Zariski dense in $V_{M'}$.
Note that $\# \LL_p \leq 2$ for any $p \in \PPPP_M$ by assumption.    Let us now consider $\CCC'' = \CCC_M \setminus \{\ell_1, \dots,\ell_{\# \LL_M - 7} \}$. Among the intermediate configurations between $\CCC'$ and $\CCC_M$, $\CCC''$ is such that $\CCC' = \CCC'' \setminus\ell_{\# \LL_M - 6}$ (denote as $M''$ the corresponding matroid). Theorem $\ref{irreducibility}$ implies that $\Gamma_{M''}$ is irreducible and so is its Zariski closure $V_{M''}$. If $\CCC'' = \CCC_M$ we are done. If not, we introduce a configuration $\CCC'''$ and we argue analogously. The process terminates after $\# \LL_M - 6$ steps. 
\end{proof}

 \begin{remark}
        In view of Lemma~\ref{forest lifting}, Theorem~\ref{decomposition liftable} generalises
        Proposition 5.9, and Theorem 5.11 from \cite{CGMM}. Thereby an analogous result was proved, for
        forest-configurations.
        \hfill $\diamond$
    \end{remark}
    
Recall that every matroid variety $V_M$ is equal to its combinatorial closure $V_M^{\comb}$, but not necessarily to the circuit variety $V_{\mathcal{C}(M)}$, see and Definition~\ref{def:combinatorial_closure} and Proposition~\ref{ideal matroid variety}. In the following theorem, we demonstrate that for liftable configurations, all these varieties coincide. We first establish the theorem and subsequently prove the technical lemma, Lemma~\ref{non-simple}, used in the proof.

\begin{theorem}\label{circuit variety of liftable conf.}
    Let $M$ be a matroid whose associated point-line configuration $\CCC_M$ is liftable and has no triplets of concurrent lines. Then,
    
    $$V_{\C(M)} = V_M \qquad \hbox{ and equivalently } \qquad \sqrt{I_{\C(M)}} = I_{M}.$$
\end{theorem}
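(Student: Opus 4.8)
The plan is to prove the two equivalent statements $V_{\C(M)} = V_M$ and $\sqrt{I_{\C(M)}} = I_M$ simultaneously, since by Proposition~\ref{ideal matroid variety} we have $V_{\C(M)} = V_M^{\comb}$ and $I_M^{\comb} = \sqrt{I_{\C(M)}}$, so it suffices to show $V_M^{\comb} = V_M$. Because $V_M \subseteq V_M^{\comb}$ always, the real content is the reverse inclusion. Recall $V_M^{\comb} = \bigcup_{M \le N} V_N$, so the goal is to show that for every matroid $N$ with $M \le N$ (i.e.\ $\D(M) \subseteq \D(N)$) on the same ground set $[n]$, one has $V_N \subseteq V_M$.

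First I would reduce to the generic point of $V_N$: it is enough to show that each realisation $A$ of $N$ lies in $V_M = \overline{\Gamma_M}$. Such an $A$ is a $3 \times n$ matrix whose columns satisfy all the dependencies of $N$, hence all the dependencies of $M$, but possibly more. The strategy is to deform $A$ within $V_M$: I want to produce a family of matrices $A_t$ with $A_0 = A$ and $A_t \in \Gamma_M$ for $t \ne 0$ small. The key geometric input is liftability of $\CCC_M$ together with Lemma~\ref{perturbation}, which already handles the extreme case where $N$ is the $2$-uniform matroid (all columns collinear): there every realisation of $U_{2,n}$ lies in $V_M$. The general case should be handled by combining this with the structure of $N$: a realisation of $N$ decomposes according to the lines of $\CCC_N$, and on each line the points are collinear, so one can try to apply the liftability of the relevant sub-configuration of $\CCC_M$ line-by-line, or more globally realise the whole picture as a projection of a realisation of $\CCC_M$. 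This is presumably where Lemma~\ref{non-simple} enters — to deal with the fact that $N$ need not be simple, so that its point-line configuration may have coincident points or ghost lines, and one must check that a realisation of the non-simple $N$ is still a limit of realisations of $M$.

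Concretely, the inductive skeleton I would set up mirrors the proof of Theorem~\ref{decomposition liftable}: order the lines of $\CCC_M$, and for a matroid $N > M$ covering $M$ in the dependency order, the configuration $\CCC_N$ differs from $\CCC_M$ by forcing one extra collinearity (or a coincidence/rank drop). Using that $\CCC_M$ is liftable and has no triplet of concurrent lines, I would argue that any realisation of $N$ can be perturbed: choose collinear points $P_1,\dots,P_n$ on a line $\ell$ in $\PP^2_\CC$ obtained by projecting the columns of $A$ from a generic centre $P$, invoke liftability to lift $P_1,\dots,P_n$ to a non-degenerate realisation of $\CCC_M$, and then run the scaling argument of Lemma~\ref{perturbation} (multiplying the "height" coordinates by $\varepsilon/(nz)$ and letting $\varepsilon \to 0$) along a path that degenerates the lifted realisation of $M$ precisely onto $A$. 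Multilinearity of the $3\times 3$ determinants shows the non-vanishing of all the relevant minors is preserved along the way, so the path lies in $\Gamma_M$ away from $t=0$, giving $A \in \overline{\Gamma_M} = V_M$. Iterating over a chain $M = N_0 \le N_1 \le \cdots \le N_k = N$ then yields $V_N \subseteq V_M$ for all $N \ge M$.

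The main obstacle I anticipate is the last point: when $N$ is strictly larger than $M$, the realisation $A$ of $N$ is a genuinely degenerate configuration relative to $\CCC_M$ (extra incidences, possibly coincident points, possibly a drop of rank to $2$), and one must ensure that the liftability of $\CCC_M$ still "sees" this degenerate configuration as a limit — i.e.\ that the linear system $\Lambda z = 0$ governing lifts has solutions whose associated heights degenerate in the right direction, even when the base points $x_i$ are chosen so as to force the extra collinearities of $N$. This is exactly the kind of bookkeeping that Lemma~\ref{non-simple} is designed to absorb: it should say that a realisation of a non-simple matroid lying "above" $M$ is approximated by realisations of $M$, and its proof will likely combine a dimension count on $\LLL_{\CCC_M}$ with the no-concurrent-lines hypothesis (which guarantees that imposing one more collinearity genuinely cuts down, rather than trivialising, the space of lifts). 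Once Lemma~\ref{non-simple} is in hand, assembling the inclusion $V_M^{\comb} \subseteq V_M$ and hence the equalities $V_{\C(M)} = V_M$ and $\sqrt{I_{\C(M)}} = I_M$ should be routine.
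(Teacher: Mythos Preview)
Your reduction to showing $V_N \subseteq V_M$ for every $N \ge M$ is correct, and you correctly isolate the collinear case (Lemma~\ref{perturbation}) and the non-simple case (Lemma~\ref{non-simple}). The gap is in the remaining case: $N$ is simple, of rank $3$, with $\CCC_N$ genuinely containing more collinearities than $\CCC_M$. Your proposed mechanism --- project the columns of $A$ from a generic centre onto a line, lift the resulting collinear tuple to a realisation of $\CCC_M$, then scale the heights towards zero --- does not degenerate onto $A$; it degenerates onto the \emph{projected} collinear tuple. That is precisely why Lemma~\ref{perturbation} is stated only for realisations of the $2$-uniform matroid. For a rank-$3$ realisation $A$ of $N$, scaling the heights of some lift of its projection recovers the projection, not $A$, so the path you describe never approaches $A$. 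Iterating over a chain $M = N_0 \le \cdots \le N_k = N$ does not help: at each step you would need the same argument for $N_i$ in place of $M$, and $\CCC_{N_i}$ need not be liftable.

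You also overestimate what Lemma~\ref{non-simple} does: it only replaces $C$ by a nearby $C'$ realising a \emph{simple} $N' \ge M$; it does not produce $C' \in \Gamma_M$. So after applying it you are still left with the simple rank-$3$ case above. The paper closes this gap by an \emph{induction on the number of lines of $\CCC_M$}: for each $\ell \in \LL_M$ one forgets the points lying only on $\ell$ via a coordinate projection $\pi_\ell \colon \CC^{3n} \to \CC^{3k}$, observes that $\CCC_M \setminus \ell$ is still liftable with fewer lines, and applies the inductive hypothesis to get $\pi_\ell(C) \in V_{M_\ell}$. One then argues, via complements, that $\bigcap_{\ell} \pi_\ell^{-1}(V_{M_\ell}) \subseteq V_M$. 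This line-deletion induction is the missing idea in your plan.
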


\begin{proof}
    We prove the result by induction on the number of lines in $\CCC_M$.
     
        If $\CCC_M$ has no lines, then $M$ is the 3-uniform matroid. In this case, the realisation space $\Gamma_M$ is a Zariski open subset, and $V_M = \overline{\Gamma_M} = \CC^{3n} = V_{\C(M)}$.
    
    Assume that the result is true for any matroid fulfilling the hypotheses with $\leq m$ lines. Consider a configuration $\CCC_M$ with $m+1$ lines and let $C$ be a point in $V_{\C(M)} \setminus \Gamma_M$, which equals to $V_{M}^{\comb} \setminus \Gamma_M$, by Proposition~\ref{ideal matroid variety}. We want to prove that $C \in V_M$.
    If $M$ is a matroid over $[n]$, the point $C \in \CC^{3n}$ realises a matroid $N \ge M$, over $[n]$, which is dependent for $M$.
    In particular, the coordinates of $C$ can be represented as a matrix whose columns are the $(x,y,z)$-coordinates of $n$ points in the projective plane.
    As $\CCC_M$ is liftable (Lemma~\ref{non-simple}), we restrict to the case where $N$ is a simple matroid.

    \medskip

    If the associated point-line configuration $\CCC_N$ consists of collinear points, the result holds by Lemma~\ref{perturbation}. Otherwise, for any line $\ell \in \LL_M$, there is a well-defined projection $\pi_{\ell} \colon \CC^{3n} \to \CC^{3k}$, for some $k \leq n$, which deletes from the matrix $C$ the columns that are coordinates of points in $\CCC_M$, but not in $\CCC_M \setminus \ell$.
    Now, for any line $\ell$, let us denote $M_{\ell}$ for the simple matroid whose point-line configuration is $\CCC_M \setminus \ell$. Then, $\pi_{\ell}(C) \in V_{M_{\ell}}$ because $\CCC_M \setminus \ell$ is liftable. Thus,  $C \in \bigcap_{\ell \in \LL} \pi_{\ell}^{-1}(V_{M_{\ell}})$.
    To conclude, we prove that $\bigcap_{\ell \in \LL} \pi_{\ell}^{-1}(V_{M_{\ell}}) \subseteq V_{M}$. This is equivalent to the inclusion:
    $$\left( \bigcap_{\ell \in \LL} \pi_{\ell}^{-1}(V_{M_{\ell}}) \right) ^c \supseteq \left( V_{M} \right)^c = \left( \overline{\Gamma_{M}}\right)^c = (\Gamma_{M}^c)^{\circ}.$$
    Now, as $M$ is not 3-uniform, $(\Gamma_{M}^c)^{\circ}$ is not empty. Let $D = (D_{1,1}, \dots, D_{3,n})$ be in the Zariski interior of $\Gamma_{M}^c$. We show that there exists a line $\ell$ such that $\pi_{\ell}(D) \notin V_{M_{\ell}}$. Now, $\Gamma_{M}$ is the semi-algebraic set defined by $p_i (x_{1,1}, \dots, x_{3,n}) = 0$ and $q (x_{1,1}, \dots, x_{3,n}) \neq 0$ for certain homogeneous polynomials $p_i$ and $q$ imposing, respectively, the dependence and independence relations for the matroid $M$.

    \medskip
    
    If one of the polynomials $p_i$ does not vanish when evaluated at the coordinates of $D$, then there is a collinearity of $\CCC_M$, which is not satisfied by $D$. Let $\ell$ be any line in $\LL_M$ not requiring the critical collinearity, then $\pi_{\ell}(D) \notin V_{M_{\ell}}$. Hence, $C \in \bigcap_{\ell \in \LL} \pi_{\ell}^{-1}(V_{M_{\ell}}) \subseteq V_{M}$.

    \medskip
    
    The only excluded case is when, for all $i$, $p_i(D_{1,1}, \dots, D_{3,n}) = q(D_{1,1}, \dots, D_{3,n}) = 0$. We show that this cannot happen; specifically we prove that $D$ is a point of $V_M$. On the one hand, the vanishing of the $p_i$'s implies that $D \in V_{C(M)} \setminus \Gamma_{M}$. On the other hand, as long as $D$ is in the interior of $\Gamma_{M}^c$, there exists a Zariski open subset entirely contained in $\Gamma_{M}^c$ and containing $D$.

    Now, Lemma~\ref{non-simple} allows us to restrict to the case where $D$ realises a simple matroid, dependent for $M$, and Lemma~\ref{perturbation} excludes the possibility that the $n$ points given by the realisation of $D$ are collinear. In other words, the point-line configuration realised by $D$ has $n$ distinct points, among which at least three of them are not collinear, and respects all the collinearities of $M$. There are some extra-collinear points which are the projective image of a sub-configuration of $\CCC_M$. As $\CCC_M$ is  liftable, all its sub-configurations are liftable as well. Thus, the unwanted collinearity can be resolved by a small arbitrary lifting.   
    
    To conclude,  if $D$ satisfies $p_i(D_{1,1}, \dots, D_{3,n}) = q(D_{1,1}, \dots, D_{3,n}) = 0$, then any Euclidean open subset containing $D$ intersects $\Gamma_M$, contradicting that $D$ is in the Zariski interior of $\Gamma_M^c$.
    \end{proof}

We now prove the technical lemma used in the proof of Theorem~\ref{circuit variety of liftable conf.}.
\begin{lemma}\label{non-simple}
    Let $M$ be a matroid whose point-line configuration $\CCC_M$ is liftable and has no triplets of concurrent lines. Let $C \in \CC^{3n}$ be a point in $V_{\C(M)} \setminus \Gamma_M$, realising a non-simple matroid $N > M$. Then, for any $\varepsilon > 0$, there exists $C'$ in the Euclidean open ball $B(C,\varepsilon)$ realising a simple matroid $N'$ which is dependent for $M$, or equivalently,  is $M \leq N'$.
    Further, if $C' \in V_M$, then $C \in V_M$.
\end{lemma}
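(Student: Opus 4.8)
The statement has two parts: a perturbation claim (approximating $C$ by $C'$ realising a simple matroid) and a consequence (if $C' \in V_M$, then $C \in V_M$). I would prove the first part by unwinding the meaning of "$N$ is non-simple": this means the realisation $C$ has either a zero column (a loop) or two proportional columns (a non-trivial parallel class). The key point is that such degeneracies are \emph{not} required by $M$, since $M$ is simple — loops and coincident points are never forced by the circuits of a simple matroid. So the plan is to perturb the offending columns: replace a zero column by a generic small vector, and separate proportional columns by a generic small displacement, keeping all other columns fixed. One must check that (i) the resulting $C'$ still lies in $V_{\mathcal{C}(M)}$, i.e.\ still satisfies all circuit dependencies of $M$, and (ii) $C'$ can be taken to realise a \emph{simple} matroid $N'$ with $M \le N'$.

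For (i): the dependencies of $M$ are collinearity conditions on triples $\{i,j,k\}$ of points. If we move only the columns involved in a loop or coincidence, we must move them \emph{within} the flats they are forced to lie on. Concretely, for each line $\ell \in \LL_M$ through a degenerate point $p$, the perturbed point must stay on the line spanned by the other (non-degenerate, or already-separated) points of $\ell$; since $\CCC_M$ has no triplets of concurrent lines, each point lies on at most two lines, so the perturbed position is constrained to lie on an intersection of at most two lines — generically a single point, but here we have freedom because the degenerate point can be pushed along a line or, if on two lines already realised as distinct lines, we instead argue by a dimension count that a generic nearby configuration resolves the degeneracy while preserving all required collinearities. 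This is exactly where the liftability hypothesis enters: liftability of $\CCC_M$ (and hence of all its sub-configurations, as noted in the Remark after Lemma~\ref{perturbation}) guarantees that the collinearity constraints are "soft" enough that a generic small move keeps $C'$ inside $V_{\mathcal{C}(M)}$ without creating or destroying the essential incidences. For (ii): after perturbing, the matroid $N'$ realised by $C'$ is simple (no loops, no repeated points by genericity of the perturbation), and since $C' \in V_{\mathcal{C}(M)}$ it satisfies all circuit relations of $M$, so every dependent set of $M$ is dependent for $N'$, i.e.\ $M \le N'$.

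For the second part — if $C' \in V_M$ then $C \in V_M$ — I would run the perturbation for a sequence $\varepsilon_k \to 0$, producing $C'_k \to C$ with each $C'_k$ realising a simple matroid. Actually the cleaner route: show that $C$ lies in the Euclidean closure of the set of such simple-realising points $C'$, each of which (by hypothesis, applied at each scale) lies in $V_M$. Since $V_M$ is Zariski closed, hence Euclidean closed, and $C'_k \in V_M$ with $C'_k \to C$, we conclude $C \in V_M$. Here one should be slightly careful: the hypothesis "$C' \in V_M$" is assumed for the particular $C'$ produced; so I would phrase the perturbation so that the \emph{same} construction, applied with $\varepsilon = \varepsilon_k$, yields points $C'_k$ all satisfying the hypothesis, and then take the limit.

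**Main obstacle.** The delicate step is (i): ensuring the perturbed configuration $C'$ still satisfies \emph{all} the circuit relations of $M$ — i.e.\ stays in $V_{\mathcal{C}(M)}$ — while simultaneously \emph{breaking} the accidental degeneracy. When a degenerate point $p$ lies on two lines of $\CCC_M$, those two lines in the realisation $C$ meet at the point represented by $C$'s column $p$; a naive perturbation would pull $p$ off one of the two lines. The resolution must exploit that liftability lets us re-realise the whole local picture: rather than moving a single column, one lifts a sub-configuration (via the space of lifts $\LLL_{\CCC_M \setminus \ell}$) and then transports this along the projection, so that the new configuration is genuinely a nearby realisation respecting $\CCC_M$'s collinearities. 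Making this simultaneous "break the bad incidence, keep the good ones" move precise — and verifying it can be done with arbitrarily small $\varepsilon$ — is the crux, and it is precisely what the liftability of all sub-configurations of $\CCC_M$ is designed to supply.
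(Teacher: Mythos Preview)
Your proposal is essentially correct and follows the same strategy as the paper: perturb loops and coincident columns while preserving the collinearities of $M$, using the at-most-two-lines hypothesis to bound the constraints on each point and invoking liftability of sub-configurations to handle the hard case where a multiple point sits at the intersection of two realised lines. The paper organises this via the device of \emph{ghost lines} $\hat\ell$ (the $2$-dimensional subspace spanned, or generically chosen, through the points $c_\ell$) together with an explicit six-step case analysis (S1--S6) treating points on $0$, $1$, or $2$ lines and the sub-cases where several ghost lines coincide; liftability enters in step S4 through an induction on the number of points, which is the precise mechanism you gesture at when you speak of lifting a sub-configuration via $\mathscr{L}_{\mathcal{C}_M\setminus\ell}$.
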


\begin{proof} The coordinates of the point $C$ can be represented by the $3 \times n$ matrix:
    $$C = \left( \begin{matrix} C_{1,1} & C_{1,2} & \dots & C_{1,n} \\
        C_{2,1} & C_{2,2} & \dots & C_{2,n} \\
        C_{3,1} & C_{3,2} & \dots & C_{3,n}
    \end{matrix} \right) = (c_1 \ c_2 \  \dots \  c_n ),$$
    where $c_i \in \CC^3$ for each $i$.
    Since the matroid $N$, realised by $C$, is non-simple, the matrix above might contain zero vectors $c_i = 0$ or linearly dependent pairs of columns $c_i = \lambda \cdot c_j$ for some $\lambda \in \CC^*$. To prove the result, we proceed by induction on $n$. Since the base case is trivial, we may assume that the result holds for any sub-configuration of $\CCC_M$, having less than $n$ points. We recall that the point-line configuration $\CCC_N$ of $N$ is associated to the simplification of the matroid.

    Fixed a positive real number $\varepsilon$, the procedure to obtain the point $C'$ will consist in perturbing the columns of $C$ finitely many times by adding to them vectors whose norm is bounded by $\varepsilon$. For the choice of the base-field $\CC$, and the continuity property of the Euclidean distance over $\CC^{3n}$, the value of $||C - C'||$ will eventually be bounded by a continuous function of $\varepsilon$.
    
    By assumption we have that $N > M$, so every flat of rank $2$ in $M$ is dependent in $N$. This means that for every line $\ell$ of $\CCC_M$, the points $c_\ell := \{c_i : i \in \ell\}$ lie in a $2$-dimensional linear subspace $\hat \ell \subseteq \CC^3$, which we call the \textit{ghost line} of $\ell$. There are two possibilities for the points $c_\ell$: 
    \begin{itemize}
        \item either their linear span is $2$-dimensional, in which case $\hat \ell$ is uniquely determined; or
     \item the linear span has dimension strictly less than $2$, in which case we choose $\hat \ell$ generically such that it contains $c_\ell$. 
     \end{itemize}
     We construct the point $C'$ algorithmically by performing the following steps.
    
    \medskip
    
 \noindent   \textbf{Loops:} Suppose there is a zero column $c_i = 0$ of $C$. By assumption, the point $i$ of $\CCC_M$ lies on at most two lines $\ell_1$ and $\ell_2$ (if this is not the case, then the same strategy can be performed with fewer constraints). Let $v \in \hat {\ell_1} \cap \hat {\ell_2}$ be a point in the intersection of the two ghost lines. Without loss of generality, we may assume that $|v| = 1$. We perturb $c_i$ by moving it to the point $\varepsilon' v$. This results in a configuration $N'$ with $N'\geq M$. 

 \smallskip
 From now on, we assume that all points $c_i$ are nonzero.
    
    \medskip
    
  \noindent     \textbf{Multiple points:} Let $\mathscr{I} = \{I_1, \dots, I_K\}$ be the set of multiple points of $\CCC_N$, i.e., the rank-one flats of $N$ of cardinality strictly higher than one. For any $I_k \in \mathscr{I}$ let $\mathscr{I}_k = \{i_1, \dots, i_{n_k}\}$ be the corresponding set of points in $\CCC_M$ such that $\Span_{\CC}\{c_{i_1}, \dots, c_{i_{n_k}}\}$ is one-dimensional. We introduce a procedure to perturb these points so that they realise the point-line configuration of a simple matroid, dependent on $M$. We address cases based on the $i_l$'s. Initially, we handle points lying on no lines of $\CCC_M$ (step~S1) and points lying on one line of $\CCC_M$ (step S2). At this stage, all multiple points can be assumed to consist of points belonging to two lines of $\CCC_M$. Now, either all (ghost) lines through a multiple point coincide or not. In the former case, we proceed as in step S3;~in the latter case, we apply~steps~S4~and~S5.

        \begin{itemize}
            \item[S1.] For any index $k = 1, \dots, K$ and for any $l = 1, \dots, n_k$, if the point $i_l \in \mathscr{I}_k$ does not belong to any line in $\CCC_M$, then it can be translated along any direction. In other words, we perturb the column $c_{i_l}$ by adding to it a vector $\varepsilon v$, where $v$ has unitary norm (see Figure~\ref{fig:fat IV} for an example). In turn, we can assume that for any $k = 1, \dots, K$, all points in $\mathscr{I}_k$ lie at least on a line in $\CCC_M$.

                \begin{figure}[h]
                \centering
                \includegraphics[scale=0.31]{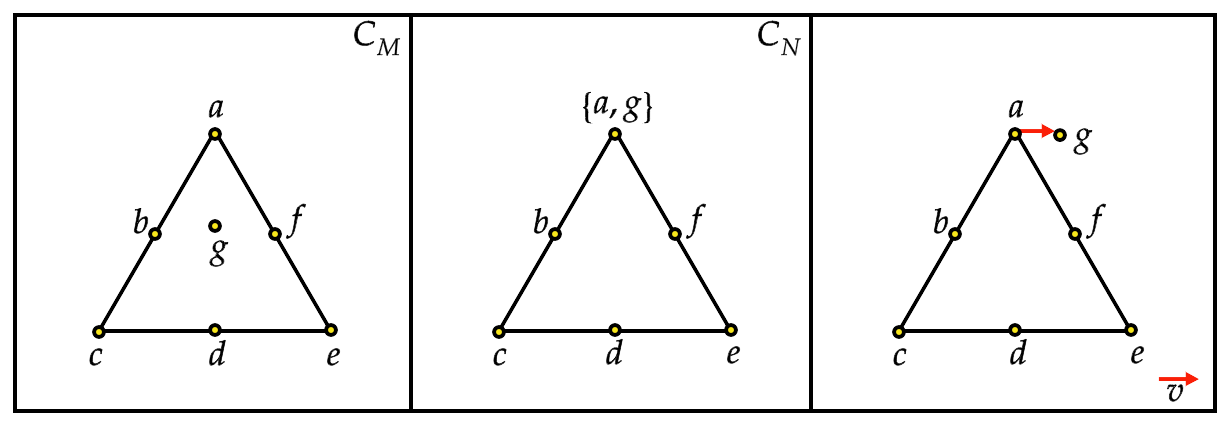}
                \caption{The figure refers to step S1, and illustrates how to solve the double point $\{a,g\}$. In view of the fact that the point $g$ does not lie on any line of $\CCC_M$, it can be translated along the direction $v$.}
                \label{fig:fat IV}
                \end{figure}

            \item[S2.] For any $k = 1, \dots, K$ and for any $l = 1, \dots, n_k$, we check whether the point $i_l \in \mathscr{I}_k$ belongs to a single line $\ell$ in $\CCC_M$. If this is the case, the point $i_l$ can be realised by translating the corresponding column $c_i$ along the ghost line $\hat \ell$, as in Figure~\ref{fig:fat V}. Thus, we can further assume that for any $k = 1, \dots, K$, all points in $\mathscr{I}_k$ lie at the intersection of two lines in $\CCC_M$.

                \begin{figure}[h]
                \centering
                \includegraphics[scale=0.31]{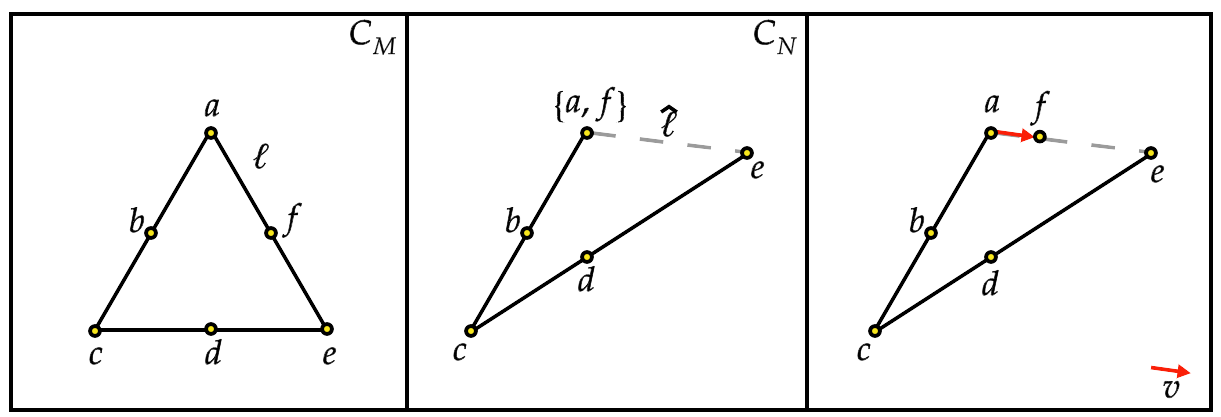}
                \caption{The figure illustrates step S2, and how to solve the double point $\{a,f\}$.~In~the starting configuration $\CCC_M$, the point $f$ lies solely on the line $\ell$, and can be translated along~the~direction~$v$.}
                \label{fig:fat V}
                \end{figure}
          
            \item[S3.]  
For any $k$, if all the (ghost) lines through $I_k$ coincide on a unique (ghost) line $\ell$, then we perturb all points of $\mathscr{I}_k$ along $\ell$. Figure~\ref{fig:fat VIII} illustrates that this operation might not lead to a full realisation of $\CCC_M$, as it does not take the independence relations into account. However, it generates all required collinearities, leading to a dependent matroid for $M$. Any remaining multiple point now lies at the intersection of the realisation of at least two non-overlapping lines.

                \begin{figure}[h]
                \centering
                \includegraphics[scale=0.31]{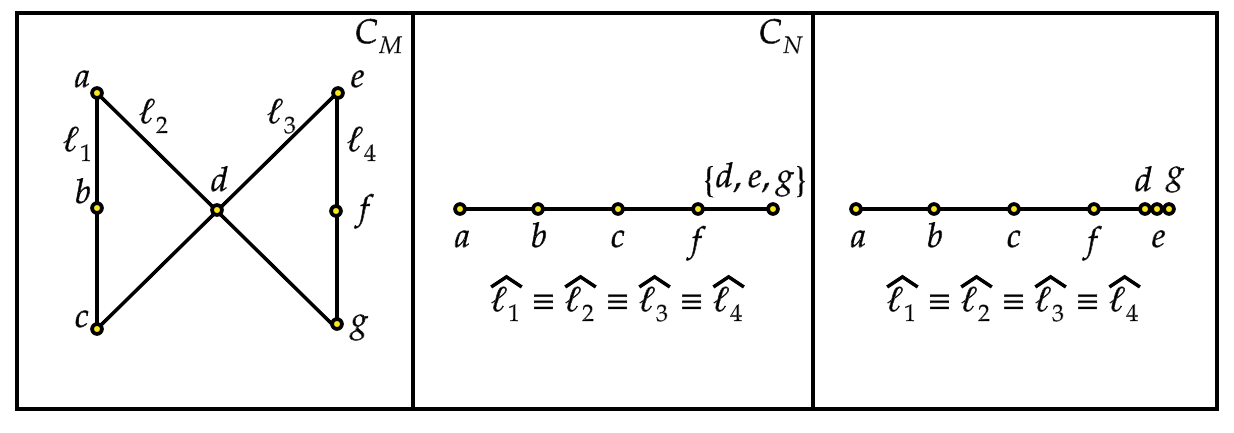}
                \caption{The realisation of lines $\ell_1, \dots, \ell_4$ of $\CCC_M$ all coincide in $\CCC_N$. Therefore, following step S3, the triple point $\{d,e,g\}$ can be split along the line $\hat{\ell_1} \equiv \dots \equiv \hat{\ell_4}$ to realise all collinearities of $\CCC_M$.}
                \label{fig:fat VIII}
                \end{figure}

            \item[S4.] We now remove all remaining multiple points from the configuration, and we consider the multiple lines $L$ that are incident to them. We resolve the multiple lines using a lifting procedure, which can be followed graphically from Figure~\ref{fig:fat VI}. We consider the following two steps:

                    \begin{itemize}
                    \item[S4.1.] In view of the liftability property of $\CCC_M$ and the induction hypothesis, we can perform a lifting of the sub-configuration of $N$ that involves only the remaining points on $L$ and the (ghost) lines that coincide with $L$. Here, we pick our projection point in general position, away from all lines generated by points of $\CCC_N$. It follows from the proof of Lemma~\ref{perturbation} that the distance between $C$ and the lifted configuration can be bounded by $\varepsilon$.

                    \item[S4.2.] If a lifted point $i$ was the intersection of $L$ with another (ghost) line $\hat \ell$, prior to the lifting, then we need to further perturb it. In $\CCC_M$, the point $i$ lies at the intersection of two lines $\ell$ and $\ell'$. In $\CCC_N$, $\hat \ell'$ coincides with the line $L$. After the lifting, we have that $\hat \ell'$ is lifted and intersects $\hat \ell$ at a unique point. We redefine $i$ to be this unique intersection point, whose distance from the previous lifted point is a continuous function of $\varepsilon$.
                    \end{itemize}
            
            For any lifted (ghost) line resulting from this operation and containing a point in one of the multiple points through $\ell$, we can assume, without loss of generality, that it is still arbitrarily close to the concerned multiple points. This can be explained considering that if the multiple points had not been removed from the configuration, the lifting could still have been performed. In particular, any such lifted line intersects all the other lines incident to the multiple point in a neighborhood of the multiple point itself.

                \begin{figure}[h]
                \centering
                \includegraphics[scale=0.3]{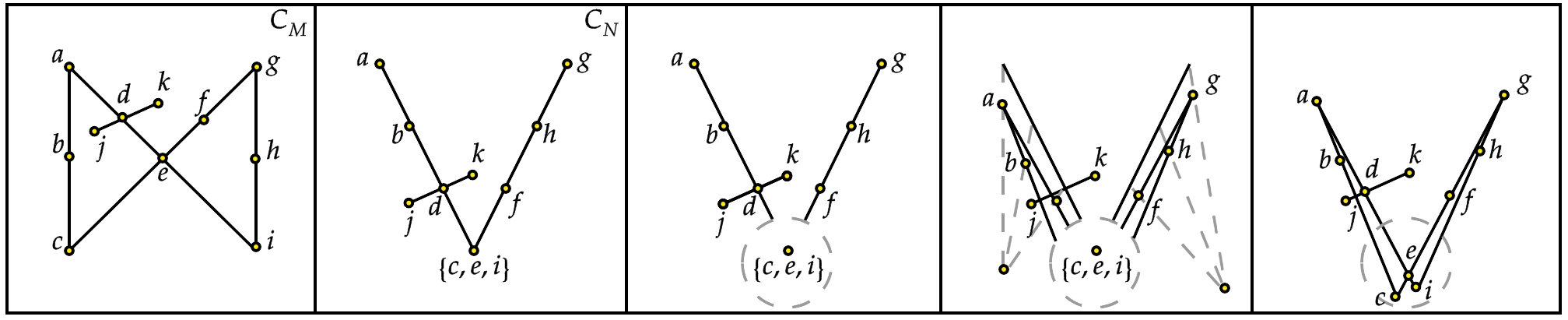}
                \caption{The figure demonstrates step S4 in action from left to right. Here, we address a triple point through liftings. Both lines incident to ${c,e,i}$ in $\CCC_N$ realise two rank-two flats of $M$. Therefore, a lifting procedure is necessary to resolve the multiple point. In the final square, we illustrate how to intersect the lifted lines to complete the configuration.}
                \label{fig:fat VI}
                \end{figure}

            \item[S5.] For any multiple point $I_k \in \mathscr{I}$, we consider the lines through $I_k$ excluded from step S4. Namely, those corresponding to rank-two flats of $N$ that do not contain more than a rank-two flat of $M$. Let $m_k$ be the number of such lines. If $m_k \geq 3$, we have to perturb $m_k-2$ of them so that they are not all concurrent on the same point in the projective plane. This can be done considering that none of these lines is a multiple line, and for any such line $\ell$, we perform the following two steps, depicted in Figure~\ref{fig:fat VII}.

            \begin{itemize}
                \item[S5.1.] We pick a generic direction $v \in \CC^3$. Our goal is moving all points on $\ell$ in the direction of $v$. 
                \item[S5.2.] We select a basis $B$ for the (ghost) line $\ell$. Then, fixing a unitary vector $v$, we consider the linear subspace $\mu = \operatorname{Span}_{\mathbb{C}}\{b + \varepsilon v : b \in B\}$. For each point $i$ lying on $\ell$, we perturb $i$ as follows. By assumption, $i$ belongs to exactly one or two lines of $\CCC_M$. If $i$ belongs to two lines $\ell$ and $\ell'$, then we move $i$ to the closest point in the intersection of the ghost line $\hat{\ell'}$ and $\mu$. Otherwise, if $i$ belongs uniquely to the line $\ell$, we move $i$ to the closest point of $\mu$.
            \end{itemize}
            
            The total distance that points have moved is a continuous function in $\varepsilon'$.

                \begin{figure}[h]
                \centering
                \includegraphics[scale=0.3]{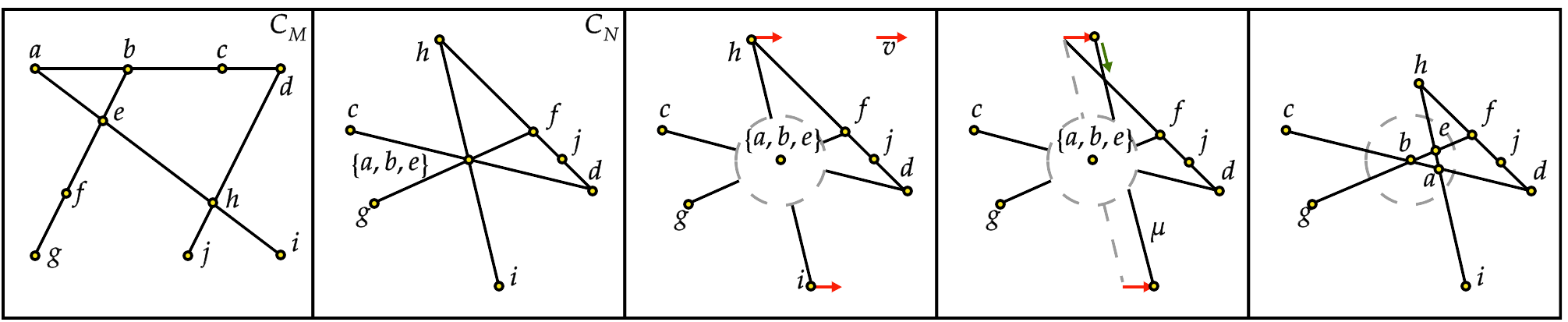}
                \caption{From left to right, the figure shows how a triple point can be solved via the line translation introduced in step S5. The point $\{a,b,e\}$ in $\CCC_N$ is such that $m_{\{a,b,e\}} = 3$. Therefore, it suffices to perform the translation of one of the three lines incident to it. As in Figure~\ref{fig:fat VIII}, the final configuration realises a matroid which is dependent for $M$.}
                \label{fig:fat VII}
                \end{figure}

            \item[S6.] For any $k$, after having performed steps 4 and 5, all lines incident to $I_k \in \mathscr I$ now intersect in points whose coordinates can be obtained by adding a vector $\varepsilon v$ to the coordinates of $I_k$ ($v$ is supposed to have unit norm). Therefore, we can finally resolve all multiple points by taking the intersection of the corresponding lines, as in the right-most representations of Figures~\ref{fig:fat VI} and~\ref{fig:fat VII}.
        \end{itemize}
By construction, the point $C'$ and its associated matroid $N'$ has the desired properties.
\end{proof}   

Before stating our decomposition theorem for quasi-liftable configurations, we state the following remark which allows us to restrict to connected point-line configurations.

    \begin{remark}
    Let $I$ and $J$ be ideals in polynomial rings $\CC[x_1, \dots, x_n]$ and $\CC[y_1, \dots, y_m]$, respectively, with prime decompositions $I = I_1 \cap \dots \cap I_k$ and $J = J_1 \cap \dots \cap J_h$. Then,
$$I + J =
I \otimes \CC[y_1, \dots, y_m] + \CC[x_1, \dots, x_n] \otimes J =
\bigcap_{\substack{i = 1, \dots, k \\ j = 1, \dots, h}} I_i + J_j.$$
Moreover, \cite[Theorem 7.4.$i$]{Mat} ensures that, in our setting, tensor product and finite intersections commute, and \cite[Proposition 5.17.b]{Mil} implies that the ideals $I_i + J_j$ are prime.\hfill $\diamond$
\end{remark}

The above remark applies in particular to disconnected configurations, allowing us not to lose any generality by adding the connectivity assumption to the statement below.
\begin{corollary}[Decomposition theorem for quasi-liftable configurations]\label{decomposition quasi-liftable}
    Let $M$ be a matroid, whose point-line configuration $\CCC_M$ is connected, quasi-liftable and has the property that every point lies on at most two lines. Then,
    \begin{equation}\label{decomposition}
        V_{\C(M)} = V_0 \cup V_{M} \qquad  \hbox{ and equivalently } \qquad \sqrt{I_{\C(M)}} = I_0 \cap I_{M},
    \end{equation}
    where $V_0$ is the matroid variety whose associated configuration is a line with $n$ marked points. Furthermore, the decompositions in (\ref{decomposition}) are, respectively, irreducible and prime.
\end{corollary}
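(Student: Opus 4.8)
The plan is to deduce the decomposition by combining the liftability-type results already established with a careful analysis of what happens when a single line of $\CCC_M$ is ``broken''. First I would set up the ambient picture: a point $C \in V_{\C(M)} = V_M^{\comb}$ realises a matroid $N \ge M$, and by Lemma~\ref{non-simple} together with Lemma~\ref{perturbation} it suffices to understand the case where $N$ is simple. If the configuration $\CCC_N$ has all $n$ points collinear, then $C \in V_0$. Otherwise $\CCC_N$ has at least three non-collinear points while still satisfying every collinearity of $\CCC_M$; the quasi-liftability hypothesis says that for every line $\ell \in \LL_M$ the sub-configuration $\CCC_M \setminus \ell$ is liftable. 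The heart of the argument is therefore to show $\bigcap_{\ell \in \LL_M} \pi_\ell^{-1}(V_{M_\ell}) \subseteq V_0 \cup V_M$, mimicking the inclusion step in the proof of Theorem~\ref{circuit variety of liftable conf.} but now accounting for the exceptional collinear locus.

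The key steps, in order, would be: (1) prove the inclusion $V_0 \cup V_M \subseteq V_{\C(M)}$, which is immediate since $V_0 = V_{N_0}$ for $N_0 \ge M$ (every circuit of $M$ is dependent for a configuration of collinear points, as $\CCC_M$ has no loops or multiple points) and $V_M \subseteq V_{\C(M)}$ always; (2) for the reverse inclusion, take $C \in V_{\C(M)}$, reduce to $N$ simple via Lemma~\ref{non-simple}, and split into the collinear case (giving $V_0$) and the non-collinear case; (3) in the non-collinear case, as in Theorem~\ref{circuit variety of liftable conf.}, show $C \in \bigcap_\ell \pi_\ell^{-1}(V_{M_\ell})$ using that each $\CCC_M\setminus\ell$ is liftable, and then argue that the complement of $V_M$ inside this intersection forces the points to collapse onto a single line; concretely, pick $D$ in the Zariski interior of $\Gamma_M^c$ that also lies in $V_{\C(M)}$, apply Lemma~\ref{non-simple} and Lemma~\ref{perturbation} to reduce to $D$ realising a simple, non-collinear configuration respecting all collinearities of $\CCC_M$, and note that since $\CCC_M$ is \emph{not} liftable but every $\CCC_M \setminus \ell$ is, the only obstruction is one ``extra'' collinearity whose sub-configuration can still be lifted — contradicting $D \in (\Gamma_M^c)^\circ$ exactly as in the liftable proof, except the quasi-liftability is what makes each single deletion work. (4) Finally, for the ideal statement and primality: $V_M$ is irreducible by Theorem~\ref{decomposition liftable}, and $V_0$ is a matroid variety of a uniform-type matroid (a line with $n$ marked points is the realisation space of the rank-$2$ uniform matroid on $[n]$ sitting in $\CC^{3\times n}$), hence irreducible; so $V_{\C(M)} = V_0 \cup V_M$ is the irreducible decomposition, and taking radical ideals gives $\sqrt{I_{\C(M)}} = I(V_0 \cup V_M) = I_0 \cap I_M$ with $I_0, I_M$ prime. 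One must also check $V_0 \not\subseteq V_M$ and $V_M \not\subseteq V_0$ (so that both are genuine components): $V_M \not\subseteq V_0$ because $\CCC_M$ has non-collinear realisations, and $V_0 \not\subseteq V_M$ because $\CCC_M$ is not liftable — a generic tuple of $n$ collinear points does \emph{not} lie in $V_M$, which is precisely the failure of liftability.

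The connectivity hypothesis enters to guarantee that the ``extra collinearity'' locus inside $V_{\C(M)} \setminus V_M$ is exactly the single stratum $V_0$ rather than a union of products of lower-dimensional pieces on the connected components; the preceding remark on tensor products and intersections of prime ideals is what would otherwise be invoked, and it is cited precisely to explain why the connected case is the essential one. I expect the main obstacle to be step (3): making rigorous the claim that, for a quasi-liftable configuration, the \emph{only} way a point of $V_{\C(M)}$ can fail to be in $V_M$ is by having all $n$ points collinear. This requires showing that any simple matroid $N$ with $M \le N$, whose configuration is not a single line, and which is ``minimally extra-dependent'' over $M$, actually arises as a degeneration within $V_M$ — i.e. that breaking any single one of the collinearities that hold in $N$ but not in $M$ lands one in a liftable sub-configuration, so a small perturbation recovers a genuine realisation of $M$ nearby. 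The bookkeeping of which lines survive in $\CCC_M \setminus \ell$ and the fact that ``every point lies on at most two lines'' (needed to invoke Lemma~\ref{non-simple} and to keep the ghost-line intersections well-behaved) is where the care lies, but the structure parallels Theorem~\ref{circuit variety of liftable conf.} closely enough that the adaptation should go through.
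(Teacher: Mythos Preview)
Your proposal is correct and follows essentially the same route as the paper: reduce via Lemma~\ref{non-simple} to a simple $N\ge M$, split on whether $\CCC_N$ is collinear (giving $V_0$) or not, and in the non-collinear case use that each $\CCC_M\setminus\ell$ is liftable together with Theorem~\ref{circuit variety of liftable conf.} to place $C$ in $V_M$. You are in fact more careful than the paper on two points: you explicitly verify that neither $V_0$ nor $V_M$ is contained in the other (the paper omits this, though $V_0\not\subseteq V_M$ is exactly the failure of liftability, as you note), and you spell out why Theorem~\ref{decomposition liftable} and the uniform-matroid description of $V_0$ give the irreducibility.

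One small clarification on your step~(3): the paper does not establish the bare inclusion $\bigcap_\ell \pi_\ell^{-1}(V_{M_\ell})\subseteq V_M$ in the quasi-liftable setting---indeed this inclusion is \emph{false} as stated, since $V_0$ sits inside the intersection but not in $V_M$. What the paper actually uses (and what your perturbation sketch correctly captures) is that a point $C$ realising a \emph{simple, non-collinear} $N\ge M$ lies in $V_M$: the extra collinearities of $N$ over $M$ flatten a proper sub-configuration of $\CCC_M$ (proper because some line of $\CCC_M$ survives in $\CCC_N$), and quasi-liftability guarantees this sub-configuration is liftable, so a small lifting resolves it. Your phrasing ``the complement of $V_M$ inside this intersection forces the points to collapse onto a single line'' is the right statement; just be aware that the paper's terse ``$\subseteq V_M$ by Theorem~\ref{circuit variety of liftable conf.}'' is shorthand for re-running that perturbation argument, not a direct citation of a containment.
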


\begin{proof}
  Let $C \in V_{\C(M)}$. By Proposition~\ref{ideal matroid variety}, we have that $C\in V_M^{\comb}$. If $C \in \Gamma_M$, then $C \in V_M = \overline{\Gamma_{M}}$. Thus, from now on, we assume that $C \in V_M^{\comb} \setminus \Gamma_M$ and we prove that $C \in V_0 \cup V_{M}$.
    If $M$ is a matroid over $[n]$, the point $C \in \CC^{3n}$ realises a matroid $N \ge M$ over $[n]$, which is dependent for $M$. 
    
    Now, if the point-line configuration $\CCC_N$ consists of collinear points, then $C \in V_0$. If $\CCC_N$ has more than one line, then $\CCC_M$ has at least two lines as well. In particular, if some sub-configurations of $\CCC_M$ are flattened in $\CCC_N$, then these sub-configurations are liftable as $\CCC_M$ is quasi-liftable and $\CCC_N$ consists of more than one line. Thus, in view of Lemma~\ref{non-simple}, we can assume that $N$ is simple.
    For any line $\ell \in \LL_M$, there is a well-defined projection $\pi_{\ell} \colon \CC^{3n} \to \CC^{3m}$, for some $m \leq n$, which deletes the coordinates of the points which are in $\CCC_M$, but not in $\CCC_M \setminus \ell$.
    Now, for any line $\ell$, let us denote  $M_{\ell}$ for the simple matroid whose point-line configuration is $\CCC_M \setminus \ell$. Then, $\pi_{\ell}(C) \in V_{M_{\ell}}$ because $\CCC_M$ is quasi-liftable.
    Thus,  $C \in \bigcap_{\ell \in \LL} \pi_{\ell}^{-1}(V_{M_{\ell}})$ which is contained in $V_M$ by Theorem~\ref{circuit variety of liftable conf.}.
\end{proof}

\subsection{The ideal of quasi-liftable configurations}
We apply the results of to compute the generators of an ideal whose radical contains the ideal of the matroid variety, see Proposition~\ref{radical}. We will first set up our notation.
  
    \medskip
    
    Let $M$ be a matroid over $[n]$ with point-line configuration $\CCC_M$ and the collinearity matrix $\Lambda$. Consider the notation of Construction~\ref{collinearity matrix}. Then, the non-zero entries of $\Lambda$ are:
    $$ x_i - x_j = \left[ \begin{matrix} x_i & x_j \\
        1 & 1 \end{matrix} \right] = \left[ \begin{matrix} x_i & x_j & 0 \\
        1 & 1 & 0 \\
        0 & 0 & 1
    \end{matrix} \right],$$
    for certain $1 \leq i < j \leq n$, where all columns are the coordinates of points $P_i, P_j$, and $P$, in a frame of reference $\{R_1, R_2, P, U\}$ with $R_1, R_2, U$ being in general position with $P$.
    
    Let $\mathfrak{P}(\Lambda_{i,j})$ be a polynomial in the entries of $\Lambda$, which is the determinant of a square submatrix of $\Lambda$ itself. In this case, $\mathfrak{P}$ is the sum of products of $k$ non-zero entries of $\Lambda$.
    
    In Construction~\ref{collinearity matrix}, the choice of $P$ as a reference point of the projective plane was intended to simplify the notation. In general, one can fix an arbitrary frame of reference $\{R_1, R_2, R_3, U\}$ in $\PP^2_{\CC}$, and use it to take coordinates for $P = (x_P, y_P, z_P)$. Let $\Lambda'$ be the matrix constructed as $\Lambda$, but considering an arbitrary frame of reference. The non-zero entries of $\Lambda'$ are:
    $$ [P_i P_j P] = \left[ \begin{matrix} x_i & x_j & x_P \\
        y_i & y_j & y_P \\
        z_i & z_j & z_P
    \end{matrix} \right].$$
    Let $\mathfrak{P}'(\Lambda'_{i,j})$ be the polynomial defined as $\mathfrak{P}$, but with entries in $\Lambda'$. Now, the vanishing of the polynomial $\mathfrak{P}$ is a projective invariant property. Since change of coordinates are projective transformations, $\mathfrak{P}$ vanishes if and only if the polynomial $\mathfrak{P}'$ vanishes.
    
     By multilinearity of determinants, the polynomial $\mathfrak{P}'$ is the linear combination of $3^k$ copies of $\mathfrak{P}'$ itself, each of them corresponding to a $k$-tuple of points of the frame of reference (with repetitions). Each copy is the sum of products of $k$ $3 \times 3$ determinants of matrices whose first two columns are as in $\mathfrak{P}$ and whose last column is a point of the frame of reference, determined by the corresponding $k$-tuple (see Example~\ref{exa:extensions}). We call each copy of $\mathfrak{P}'$ an \textit{extension} of $\mathfrak{P}.$ \hfill $\diamond$

\begin{example}\label{exa:extensions}
    Consider a matroid $M$ over $[5]$ with circuits $\mathcal{C}(M) = {123, 345}$. The configuration $\mathcal{C}_M$ consists of two lines incident at a point. Take 5 distinct points of $\mathbb{P}^2_{\mathbb{C}}$ lying on the same line and a point $P$ not collinear with them. With the above notation and Construction~\ref{collinearity matrix}, we have:
    $$ \Lambda = \left( \begin{matrix} [23] & - [13] & [12] & 0 & 0 \\
    0 & 0 & [45] & -[35] & [34]\end{matrix}\right), \quad \hbox{ and } \quad \Lambda' = \left( \begin{matrix} [23P] & - [13P] & [12P] & 0 & 0 \\
    0 & 0 & [45P] & -[35P] & [34P]\end{matrix}\right).$$
    Take $\mathfrak{P}$ (resp. $\mathfrak{P}'$) to be the minor generated by the second and third column of $\Lambda$ (resp. $\Lambda'$). Then,
    
    \begin{align*}
        \mathfrak{P}  &= -[13][45],\quad\text{and}\\
        - \mathfrak{P'} &= [13P][45P] \stackrel{\scriptscriptstyle{P = x_P R_1 + y_P R_2 + z_P R_3}}{=} [13 \, x_PR_1+y_PR_2+z_PR_3][45 \, x_PR_1+y_PR_2+z_PR_3] \\
        \hfill &= x_P^2[13R_1][45R_1] + x_Py_P [13R_1][45R_2] + x_Pz_P[13R_1][45R_3] + \\ \hfill &\hspace{10pt }+ x_Py_P[13R_2][45R_1] + y_P^2 [13R_2][45R_2] + y_Pz_P[13R_2][45R_3] + \\ \hfill &\hspace{ 10 pt}+ x_Pz_P[13R_3][45R_1] + y_Pz_P[13R_3][45R_2] + z_P^2[13R_3][45R_3].
    \end{align*}
Here, each summand of $\mathfrak{P}'$ is a copy of $\mathfrak{P}$ associated to a pair $(R_i,R_j)$, with $i,j \in \{1,2,3\}$, which is an extension of $\mathfrak P$. \hfill $\diamond$
\end{example}

\begin{proposition}\label{radical}
    Let $M$ be a maximal matroid over $[n]$, whose point-line configuration $\CCC_M$ is quasi-liftable. Then the ideal $I_M$ is contained in the radical of the ideal $I$ generated by:
    \begin{itemize}
        \item the collinearity conditions of $\CCC_M$;
        \item the extensions of the $n-2$ minors of the collinearity matrix $\Lambda_{\CCC_M}$.
    \end{itemize}
\end{proposition}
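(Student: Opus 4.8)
The plan is to translate the assertion $I_M \subseteq \sqrt{I}$ into the geometric inclusion $V(I) \subseteq V_M$ — the two are equivalent since $I_M$ is radical and $\sqrt{I} = I(V(I))$ — and then to identify which points of $V(I)$ could possibly fail to lie in $V_M$. First I would note that $I$ contains the circuit ideal $I_{\C(M)}$: as the matrix of variables has only three rows, every circuit of $M$ of size $\ge 4$ contributes no generator to $I_{\C(M)}$, so $I_{\C(M)}$ is generated exactly by the $3\times 3$ minors attached to the collinear triples of $\CCC_M$, which are among the generators of $I$. Hence $V(I) \subseteq V_{\C(M)}$. Now maximality of $M$ means in particular that $\CCC_M$ has no triplet of concurrent lines, equivalently that every point lies on at most two lines; combined with quasi-liftability — and, for disconnected configurations, the remark preceding Corollary~\ref{decomposition quasi-liftable} reducing to connected components — Corollary~\ref{decomposition quasi-liftable} gives $V_{\C(M)} = V_0 \cup V_M$, where $V_0 \subseteq \CC^{3\times n}$ is the variety of matrices of rank at most two. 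Therefore $V(I) \subseteq V_0 \cup V_M$, and the statement reduces to proving $V(I) \cap V_0 \subseteq V_M$.

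To do this I would use that both $V_M$ and $V(I)$ are stable under the left $\mathrm{GL}_3$-action on $\CC^{3\times n}$: for $V_M$ because realisations are $\mathrm{GL}_3$-stable, and for $V(I)$ because the collinearity conditions are $\mathrm{GL}_3$-semi-invariants and, by multilinearity of determinants, the $\mathrm{GL}_3$-orbit of an extension of a minor $\mathfrak{P}$ lies in the linear span of the extensions of $\mathfrak{P}$. So, given $C \in V(I)\cap V_0$, after a change of coordinates I may assume all columns of $C$ lie on the line $\{z = 0\}$, say $P_i = (x_i,y_i,0)^{t}$. The key computation is then that $[P_iP_jR_1] = [P_iP_jR_2] = 0$ while $[P_iP_jR_3] = x_iy_j - x_jy_i = [P_iP_j]$, so every extension of every $(n-2)$-minor $\mathfrak{P}$ of $\Lambda$ vanishes at $C$ except the one indexed by the constant tuple $(R_3,\dots,R_3)$, and that one is precisely $\mathfrak{P}$ evaluated on the collinearity matrix $\Lambda(C)$ built from the $n$ collinear points $P_i$ with projection centre $R_3$ (which does not lie on $\{z=0\}$, as required by Construction~\ref{collinearity matrix}). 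Since $C\in V(I)$, all $(n-2)$-minors of $\Lambda(C)$ vanish, so $\rk \Lambda(C) \le n-3$.

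At this point maximality finishes the argument, at least when the $P_i$ are pairwise distinct. By Lemma~\ref{dimesion space of lifts} the system $\Lambda(C)\,z = 0$ has a non-trivial solution, that is, a lift $\widetilde{C}$ with columns $(x_i,y_i,z_i)^{t}$ whose points satisfy all the collinearities of $\CCC_M$ and do not all lie on a line. By maximality of $M$, every such non-trivial lift is automatically a realisation of $M$ — this is the reformulation of the maximality condition recorded in Example~\ref{example:non-maximal} — so $\widetilde{C} \in \Gamma_M$. Scaling the third coordinates as in the proof of Lemma~\ref{perturbation}, the matrix $\widetilde{C}_t$ with columns $(x_i,y_i,tz_i)^{t}$ is a realisation of $M$ for every $t\neq 0$ (the relevant $3\times 3$ minors all scale by $t$, so their vanishing pattern is preserved), and $\widetilde{C}_t \to C$ as $t \to 0$; hence $C \in \overline{\Gamma_M} = V_M$.

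The step I expect to be the main obstacle is the degenerate part of $V_0$, namely when the columns of $C$ are not pairwise distinct or some of them vanish: there the lift $\widetilde{C}$ constructed above need not separate points that are forced to coincide — already the row of $\Lambda$ associated with a line through two coinciding points is proportional to $e_i - e_j$, so $z_i = z_j$ on all of $\ker \Lambda(C)$ — and then $\widetilde{C}$ is not a realisation of the simple matroid $M$. I would deal with this exactly as in the proofs of Lemma~\ref{non-simple} and Corollary~\ref{decomposition quasi-liftable}: since $C$ has rank at most two, all the ghost lines of the lines of $\CCC_M$ coincide, so only steps S1--S3 of Lemma~\ref{non-simple} (which require no liftability) are needed to reduce to a configuration realising a simple matroid, while the sub-configurations that get flattened are liftable by quasi-liftability; combined with $V_M$ being Zariski closed, this brings the degenerate case back to the distinct-point case treated above. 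The delicate bookkeeping is to check that the perturbations stay inside the locus $\{\rk \Lambda \le n-3\}$, equivalently $V(I)\cap V_0$, so that the conclusion $C \in V_M$ genuinely propagates.
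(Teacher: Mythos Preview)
Your core mechanism --- the vanishing of the extensions of the $(n-2)$-minors at $C$ forces $\rk\Lambda(C)\le n-3$, so there is a non-trivial lift which by maximality realises $M$, and the scaling $\widetilde C_t\to C$ then places $C$ in $V_M$ --- is precisely the paper's argument. The paper's proof is three lines and goes directly: every $C\in V(I)$ admits, after a change of coordinates, a non-trivial solution of $\Lambda z=0$, hence a lifted configuration with three non-collinear points, hence (by the characterisation of maximality in Example~\ref{example:non-maximal}) a realisation of $M$; therefore $V(I)\subseteq V_M$ and $I_M\subseteq\sqrt{I}$.

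Your route through Corollary~\ref{decomposition quasi-liftable} to first reduce to the case $C\in V_0$ is extra machinery the paper does not invoke --- that corollary rests on Lemma~\ref{non-simple} and Theorem~\ref{circuit variety of liftable conf.}, whereas the paper needs only Example~\ref{example:non-maximal} together with the scaling idea of Lemma~\ref{perturbation}. Your detour does buy something, though: for $C\notin V_0$ with repeated or zero columns the corollary genuinely gives $C\in V_M$, while the paper's one-line appeal to maximality tacitly assumes the $n$ points are distinct. The degenerate case you flag inside $V_0$ (coinciding columns forcing $z_i=z_j$ on every lift) is not resolved by your perturbation sketch --- moving along $V_0$ need not keep you inside $V(I)$ --- and the paper's proof does not address it either; both arguments are, as written, complete only on the open locus of $V(I)$ where the columns are pairwise distinct.
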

\begin{proof}
    From Example~\ref{example:non-maximal}, we know that an $n$-tuple of different points in $\CC^{3n}$ realises $M$ if and only if it contains at least 3 non-collinear points. Every point $P$ in $V(I)$ belongs to the interior of the matroid variety $V_M$ since, after a suitable change of coordinates, it satisfies the equality $\Lambda_{\CCC_M} (z_1 \, \dots \, z_n)^t = (0 \, \dots \, 0)^t$ and has (at least) three non-collinear points in the projective plane (we are considering $n-2$ minors). Thus, $V(I) \subset V(M)$ and so $I_M \subset \sqrt{I}$.
\end{proof}

In the following sections, we will prove that for the quadrilateral set and the $3\times 4$ grid, the two ideals in Proposition~\ref{radical} are equal. Furthermore, we will see that, for both examples, the ideal $I$ is actually radical, and we will provide a minimal generating set for $I$.

\section{The quadrilateral set}\label{sec:qua}

We now apply the results of Section~\ref{sec:circ} to the quadrilateral set. Furthermore, we provide a minimal and geometrically meaningful set of generators for the corresponding ideal. Finally, we outline the method for interpreting these generators from the arrangement, highlighting the underlying symmetries.

\begin{definition}[Quadrilateral set]
A \textit{quadrilateral set} in a projective plane $\PP^2_{\kk}$ is the datum of 4 lines and their 6 intersection points.
\end{definition}

By Definition~\ref{abstract point-line configuration}, the linear point-line configuration corresponding to a quadrilateral set consists of 6 points and 4 lines (see Figure~\ref{fig:qs}), denoted by $\CCC_{QS} = \{\PPPP_{QS}, \LL_{QS}, \II_{QS}\}$ where $\PPPP_{QS} = \{P_1, \dots, P_6\}$, $\LL_{QS} = \{\ell_{123},\ell_{156},\ell_{246},\ell_{345}\}$ and $\II_{QS}$ is the incidence relation showed in the picture.

The associated matroid of the configuration $\CCC_{QS}$ is the simple matroid $QS$ over $[6]$ whose set of circuits are $\min \left(\Delta \cup \binom{[6]}{4} \right)$, where $\Delta = \{123, 156, 246, 345\}$.

The matroid $QS$ is realisable over $\CC$, thus $\CCC_{QS}$ can be embedded in $\PP^2_{\CC}$. We take $\left( x \, y \, z\right)$-coordinates for $1, \dots, 6$, with respect to a fixed frame of reference $\{R_1, R_2, R_3, U\}$. Without loss of generality, we assume that the points of the frame are in general position with any couple of points of the embedded configuration, and we encode the coordinates in the matrix:
\[X = \left(\begin{matrix} x_1 & x_2 & x_3 & x_4 & x_5 & x_6 \\ y_1 & y_2 & y_3 & y_4 & y_5 & y_6 \\ z_1 & z_2 & z_3 & z_4 & z_5 & z_6\end{matrix}\right).\]
Our aim is to understand the variety $V_{QS}$, whose ideal, by Proposition~\ref{ideal matroid variety} is: 
\[
I_{QS} = \sqrt{\langle [123]_X,  [156]_X, [246]_X, [345]_X \rangle \ \colon J_{QS} ^ \infty}
\]
where $J_{QS}$ is the principal ideal $\langle [124]_X \cdot [125]_X \cdots [456]_X \rangle$. We achieve this by identifying a geometrically meaningful set of generators for the ideal $I_{QS}$.

\begin{notation} \label{points in general position}
Let $\ell$ be a line in $\LL$, whose points are $P_{\ell}^1, P_{\ell}^2$ and $P_{\ell}^3$. Let $P, P^1, P^2$ and $P^3$ be three points of the projective plane, not necessarily distinct. Denote as $P_{m}^1, P_{m}^2$ and $P_{m}^3$ the three points of $\PPPP$ which do not belong to $\ell$: two of them will be collinear with $P_{\ell}^1$ (wlog $P_{m}^1$ and $P_{m}^2$), two with $P_{\ell}^2$ (wlog $P_{m}^2$ and $P_{m}^3$) and two with $P_{\ell}^3$ (wlog $P_{m}^1$ and $P_{m}^3$). We denote:
\begin{align*}QS(\ell;P^1,P^2,P^3) &= [P_{\ell}^1P_{m}^1P^1][P_{\ell}^2P_{m}^2P^2][P_{\ell}^3P_{m}^3P^3] - [P_{\ell}^1P_{m}^2P^1][P_{\ell}^2P_{m}^3P^2][P_{\ell}^3P_{m}^1P^3]\\
QS(\ell;P) &= [P_{\ell}^1P_{m}^1P][P_{\ell}^2P_{m}^2P][P_{\ell}^3P_{m}^3P] - [P_{\ell}^1P_{m}^2P][P_{\ell}^2P_{m}^3P][P_{\ell}^3P_{m}^1P].
\end{align*}

Note that, once the line $\ell$ and the points $P, P^1, P^2, P^3$ are fixed, the polynomials above are well-defined up to the sign.  However, for our purposes, we will only concern with their (non)-vanishing.
\end{notation}

\begin{example}
Let us compute the $QS$-polynomials for $\ell =\ell_{123}$, $P^1 = P^2 = R_1$ and $P^3 = R_2$. 
    \begin{align*}QS(\ell_{123};R_1,R_1,R_2) &= [1 5 R_1][2 6 R_1][3 4 R_2] - [1 6 R_1][2 5 R_1][3 4 R_1]\\
    \hfill &= 
    \Big[ \begin{smallmatrix} x_1 & x_5 & 1 \\ y_1 & y_5 & 0 \\ z_1 & z_5 & 0 \end{smallmatrix} \Big]
    \Big[ \begin{smallmatrix} x_2 & x_6 & 1 \\ y_2 & y_6 & 0 \\ z_2 & z_6 & 0 \end{smallmatrix} \Big]
    \Big[ \begin{smallmatrix} x_3 & x_4 & 0 \\ y_3 & y_4 & 1 \\ z_3 & z_4 & 0 \end{smallmatrix} \Big] - \Big[ \begin{smallmatrix} x_1 & x_6 & 1 \\ y_1 & y_6 & 0 \\ z_1 & z_6 & 0 \end{smallmatrix} \Big]
    \Big[ \begin{smallmatrix} x_2 & x_4 & 1 \\ y_2 & y_4 & 0 \\ z_2 & z_4 & 0 \end{smallmatrix} \Big]
    \Big[ \begin{smallmatrix} x_3 & x_5 & 0 \\ y_3 & y_5 & 1 \\ z_3 & z_5 & 0 \end{smallmatrix} \Big]\\
    \hfill &= -x_5y_4y_6z_1z_2z_3+x_4y_5y_6z_1z_2z_3-x_3y_5y_6z_1z_2z_4+x_5y_2y_6z_1z_3z_4+\\
    \hfill & \hspace{12pt} + x_3y_4y_6z_1z_2z_5 - x_4y_1y_6z_2z_3z_5-x_3y_2y_6z_1z_4z_5+x_3y_1y_6z_2z_4z_5 + \\
    \hfill & \hspace{12pt} -x_4y_2y_5z_1z_3z_6+x_5y_1y_4z_2z_3z_6+ x_3y_2y_5z_1z_4z_6-x_5y_1y_2z_3z_4z_6+ \\
    \hfill & \hspace{12pt} -x_3y_1y_4z_2z_5z_6+x_4y_1y_2z_3z_5z_6.
\end{align*}
\end{example}

\begin{remark}
    We introduce two different multi-degrees on the monomials of the ring $R = \CC [x_1, \dots, z_6]$.
    \begin{itemize}
        \item The letter multi-degree $(d_x,d_y,d_z) \in (\ZZ_{\geq 0})^3$, where $d_x$, $d_y$, and $d_z$ are respectively the numbers of $x$,  $y$, and $z$ variables.
        \item The point multi-degree $(d_1, \dots, d_6) \in (\ZZ_{\geq 0})^6$ where $d_i$ is the number of variables corresponding to coordinates of the point $p_i$ for any $i = 1, \dots 6$.
    \end{itemize}
Notice that, by construction, the $QS$-polynomials are homogeneous of point multi-degree $(1, \dots, 1)$. \hfill $\diamond$
\end{remark}

We now provide a family of polynomials that vanish when evaluated on the coordinates of the points of a quadrilateral set.

\begin{theorem}\label{CQS}
Let $\CCC_{QS}$ be a quadrilateral set in $\PP_{\CC}^2$. Then, for any line $\ell \in \LL_{QS}$, and any three points $P^1, P^2, P^3 \in \PP^2_{\CC}$:
$$QS(\ell;P^1,P^2,P^3) = 0.$$
\end{theorem}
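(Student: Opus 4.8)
The plan is to prove the identity $QS(\ell; P^1, P^2, P^3) = 0$ for a genuine quadrilateral set by first reducing to a convenient normalisation and then exhibiting the two products of determinants as literally equal. Write $\ell = \ell_{123}$ (the other three lines are handled symmetrically, by relabelling), and let $P_\ell^1 = P_1$, $P_\ell^2 = P_2$, $P_\ell^3 = P_3$, with $P_m^1 = P_5$, $P_m^2 = P_6$, $P_m^3 = P_4$ as forced by the incidences $\ell_{156}, \ell_{246}, \ell_{345}$ of $\CCC_{QS}$. Then what we must show is
\[
[P_1 P_5 P^1][P_2 P_6 P^2][P_3 P_4 P^3] = [P_1 P_6 P^1][P_2 P_4 P^2][P_3 P_5 P^3].
\]
Each bracket is a $3\times 3$ determinant in the homogeneous coordinates; since the vanishing of each side is a projective-invariant condition (as already observed in the text preceding Proposition~\ref{radical}), I may apply a projective transformation and assume $P_1, P_2, P_3$ are the three coordinate points of $\PP^2_\CC$ and the fourth base point is chosen freely. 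Under such a normalisation the collinearity $[123] = 0$ is automatic, and the remaining three collinearities $[156] = [246] = [345] = 0$ pin down the coordinates of $P_4, P_5, P_6$ up to scalars in terms of a few parameters.

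The key computational step is then to parametrise: after the normalisation, write $P_4, P_5, P_6$ explicitly (each lies on a coordinate-axis-type line through two of $P_1, P_2, P_3$, so each has exactly one free ratio), substitute into both sides of the displayed identity, and check equality. I expect each side to factor as a monomial in the coordinates of $P_1, P_2, P_3$ times a product of the free parameters times a common determinant-like factor involving $P^1, P^2, P^3$, so that the two sides coincide term by term. An alternative, cleaner route avoiding coordinates is to use the Grassmann–Cayley / bracket-algebra machinery: expand each $[P_i P_j P^k]$ and use the syzygies among brackets coming from the four collinearity relations $[123]=[156]=[246]=[345]=0$ together with the Grassmann–Plücker relations; the identity should then follow as a formal consequence, which is also the conceptual reason the polynomial $QS(\ell; P^1,P^2,P^3)$ lies in the matroid ideal.

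The main obstacle is bookkeeping rather than depth: one must be careful that the labelling of $P_m^1, P_m^2, P_m^3$ relative to a chosen $\ell$ is exactly the one dictated by the incidence structure (the text fixes it "wlog", and the cyclic pattern $P_m^1 P_m^2 \mid P_m^2 P_m^3 \mid P_m^1 P_m^3$ matched against $P_\ell^1, P_\ell^2, P_\ell^3$ is what makes the two triple-products cancel), and that the argument is genuinely symmetric under the $S_4$-action permuting the four lines of the quadrilateral set, so that doing $\ell = \ell_{123}$ suffices. Once the normalisation is in place, the verification is a short symbolic calculation; I would present it as "a direct computation, which we illustrate for $\ell = \ell_{123}$" and indicate that the other cases follow by the symmetry of $\CCC_{QS}$.
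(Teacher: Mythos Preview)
Your normalisation step contains a genuine error: the points $P_1, P_2, P_3$ lie on the line $\ell_{123}$ of the quadrilateral set, so they are collinear and \emph{cannot} be sent to the three coordinate points of $\PP^2_\CC$ by any projective transformation. Your own next sentence (``the collinearity $[123]=0$ is automatic'') contradicts this choice, since the three coordinate points satisfy $[R_1R_2R_3]=\pm 1\neq 0$. The follow-up parametrisation also does not match the incidence structure: none of $P_4,P_5,P_6$ lies on a line through \emph{two} of $P_1,P_2,P_3$ (each line $\ell_{156},\ell_{246},\ell_{345}$ contains exactly one of $P_1,P_2,P_3$), so the ``coordinate-axis-type line'' picture is not available. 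The approach is salvageable with a correct frame --- e.g.\ the four points $P_1,P_2,P_4,P_5$ are in general position and determine $P_3,P_6$ as intersections --- but as written the reduction fails.

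The paper's argument avoids normalising the configuration altogether. It first uses multilinearity in $P^1,P^2,P^3$ (not in the configuration points) to reduce to $P^k=R_{i_k}$, and then exploits the collinearities directly: from $6\in\ell_{156}$ write $6=\alpha\cdot 1+\beta\cdot 5$, and similarly $4=\alpha'\cdot 2+\beta'\cdot 6$, $5=\alpha''\cdot 3+\beta''\cdot 4$. Substituting into the second triple product and using multilinearity collapses $QS(\ell_{123};R_i,R_j,R_k)$ to $(1-\beta\beta'\beta'')[15R_i][26R_j][34R_k]$, and a short cycle argument (composing the three substitutions and using $5\notin\ell_{123}$) forces $\beta\beta'\beta''=1$. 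This is coordinate-free and makes the role of the incidence structure transparent; your bracket-algebra alternative is closer in spirit to this than your primary plan.
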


\begin{proof}
We have to show that $QS (\ell; P^1, P^2, P^3) = 0$ for any choice of $\ell$ and points $P^1$, $P^2$, $P^3$. As a summand of $QS (\ell; P^1, P^2, P^3)$ vanishes if and only if the other vanishes too, we can assume, without loss of generality, that the points $P^1$, $P^2$, $P^3$ do not belong to any of the lines in $\LL$. Due to the multi-linearity of determinants, the claim follows if $QS(\ell; R_i, R_j, R_k) = 0$, for any line $\ell$ and any $(i, j, k) \in \{1,2,3\}^3$. (Note that we have fixed $\{R_1, R_2, R_3, R_1+R_2+R_3\}$ as the frame of reference).

We will now show the argument for a particular choice of $\ell$. It can be easily adapted for other possible choices. Let us assume $\ell =\ell_{123}$. We want to show that:
\begin{equation} \label{char1}
QS(\ell; R_i, R_j, R_k) = [15R_i][26R_j][34R_k]-[16R_i][24R_j][35R_k] = 0.
\end{equation}
By construction, $6 \in\ell_{156}$. Now, $\{1,5, 1+5\}$ is a frame of reference for the line $\ell_{156}$. This means that there exists a unique choice of $\alpha, \beta \in \CC$ such that $\alpha 1 + \beta 5 = 6$. Here, it is important to remark that $1, \dots, 6$ are fixed representatives of the corresponding points in $\PP^2_{\CC}$, which makes the choice of $\alpha$ and $\beta$ unique. If we plug this into $QS(\ell; R_i, R_j, R_k)$, we obtain:
\begin{align*}
    QS(\ell; R_i, R_j, R_k) &= [15R_i][26R_j][34R_k]-[16R_i][24R_j][35R_k]\\
    \hfill &= [15R_i][26R_j][34R_k]-[1 \, \mathbf{\alpha 1 + \beta 5} \, R_i][24R_j][35R_k]\\
    \hfill &\hspace{-13 pt}\stackrel{\hbox{\footnotesize{multilin.}}}{=}  [15R_i][26R_j][34R_k]-\alpha[11 R_i][24R_j][35R_k] - \beta[1 5 R_i][24R_j][35R_k]\\
    \hfill &= [15R_i][26R_j][34R_k] - \beta[1 5 R_i][24R_j][35R_k].
\end{align*}
In this way, we managed to have the same term as the first factor of both products. Via the same argument, the structure of the quadrilateral set yields also that $4 = \alpha'2 + \beta'6$ and $5 = \alpha''3 + \beta'' 4$, for a unique choice of $\alpha',\alpha'', \beta', \beta'' \in \CC$. Finally, we have that:
\[QS(\ell; R_i, R_j, R_k) = [15R_i][26R_j][34R_k] - \beta\beta'\beta''[1 5 R_i][26R_j][34R_k] = (1 - \beta\beta'\beta'') [15R_i][26R_j][34R_k]. \]
As a consequence, proving (\ref{char1}) is equivalent to show that $\beta \beta' \beta'' =1$. On the other hand,
\begin{align*}
5 &= \alpha''3 + \beta'' 4\\
\hfill &= \alpha''3 + \alpha'\beta'' 2 + \beta'\beta'' 6\\
\hfill &= \alpha''3 + \alpha'\beta'' 2 + \alpha \beta' \beta'' 1 + \beta\beta'\beta'' 5.
\end{align*}
Thus, $(1 - \beta\beta'\beta'') 5 = \alpha''3 + \alpha'\beta'' 2 + \alpha \beta' \beta'' 1$. Here, on the l.h.s., there is another representation of point $5$; whereas, on the r.h.s., there is a point in the line $\ell_{123}$. As far as $5 \notin\ell_{123}$ by construction, the equality above holds if and only if both sides give $\left(\begin{smallmatrix} 0 \\ 0 \\ 0 \end{smallmatrix}\right)$; that is if and only if $\beta \beta'\beta'' = 1$, as desired.
\end{proof}

Furthermore, projective transformations keep track of the vanishing of these polynomials.

\begin{lemma}\label{projective invariant property}
The vanishing of a polynomial $QS (\ell; R_i, R_j, R_k) = 0$ is a projective invariant property. 
\end{lemma}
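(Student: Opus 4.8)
The plan is to reduce the claim to the well-definedness already established in Theorem~\ref{CQS} and its proof. Recall that the polynomial $QS(\ell; R_i, R_j, R_k)$ is, up to sign, a specific polynomial in the homogeneous coordinates $x_m, y_m, z_m$ of the six points of the configuration (with respect to the fixed frame $\{R_1, R_2, R_3, U\}$), built as a difference of two products of three $3\times 3$ determinants. The statement to prove is that whether this polynomial vanishes does not depend on the choice of coordinates, i.e.\ it is invariant under the natural $\mathrm{PGL}_3(\CC)$-action on $\PP^2_\CC$.

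First I would recall that a projective transformation $\varphi \in \mathrm{PGL}_3(\CC)$ is represented by an invertible matrix $A \in \mathrm{GL}_3(\CC)$, acting on homogeneous coordinate column vectors by $v \mapsto Av$. The key elementary fact is the determinant multiplication rule: for any three points $P, Q, S$ with coordinate vectors $p, q, s$, one has
\[
[\,(AP)(AQ)(AS)\,] = \det(A)\cdot [PQS],
\]
since the matrix with columns $Ap, Aq, As$ is $A$ times the matrix with columns $p, q, s$. Applying this to each of the six determinant factors appearing in $QS(\ell; R_i, R_j, R_k)$ — three in the first product and three in the second — shows that transforming all points by $A$ multiplies each of the two products by $\det(A)^3$, hence multiplies the whole polynomial $QS(\ell; R_i, R_j, R_k)$ by $\det(A)^3 \neq 0$. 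Therefore $QS(\ell; R_i, R_j, R_k)$ evaluated at the transformed configuration vanishes if and only if it vanishes at the original one.

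One subtlety to address is that changing the frame of reference also changes which points play the role of $R_i, R_j, R_k$: a projective change of coordinates can be viewed as fixing the points of the configuration and instead moving the frame. Here I would invoke exactly the multilinearity argument from the proof of Theorem~\ref{CQS}: the vanishing of $QS(\ell; P^1, P^2, P^3)$ for arbitrary points $P^1, P^2, P^3$ is equivalent to the vanishing of all $QS(\ell; R_i, R_j, R_k)$ with $(i,j,k) \in \{1,2,3\}^3$, so the property ``$QS(\ell; R_i, R_j, R_k) = 0$ for all $i,j,k$'' is intrinsic and frame-independent; combined with the $\det(A)^3$ scaling above for a single fixed frame, this gives invariance under both interpretations of a projective transformation. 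Since there is no genuine obstacle here — the argument is the standard relative-invariant computation for brackets — the main (mild) point to be careful about is simply stating clearly what ``projective invariant property'' means and checking the scaling factor is a nonzero power of $\det A$, so that non-vanishing is preserved in both directions.
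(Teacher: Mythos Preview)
Your argument is correct and takes essentially the same route as the paper: both show that applying a linear change of coordinates multiplies $QS(\ell;R_i,R_j,R_k)$ by a nonzero power of the determinant, so vanishing is preserved. The paper additionally tracks a diagonal matrix $D$ rescaling the homogeneous representatives of the points (giving an overall factor $\det T^{3}\det D^{3}$), which you cover implicitly via the point multi-homogeneity of $QS$; your second paragraph on frame-dependence of the $R_i$ is unnecessary, since the paper simply transforms the $R_i$ together with the configuration points.
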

\begin{proof}
We prove the lemma for the line $\ell_{123}$. We need to show that:
\[QS(\ell_{123}; R_i, R_j, R_k) =[15R_i][26R_j][34R_j]-[16R_i][24R_j][35R_k] =0 \]
is a projective invariant property. Therefore, we consider $T \in GL(\CC,3)$ and $D \in \diag(\CC,3)$, and we write down $QS(T\ell_{123}D;TPD)$, as follows:
\begin{align*}
QS(T\ell_{123}D; TR_iD, TR_jD, TR_kD) &= [T1D\;T5D\;TR_iD][T2D\;T6D\;TR_jD][T3D\;T4D\;TR_kD]+\\
\hfill & \hspace{15pt} -[T1D\;T6D\;TR_iD][T2D\;T4D\;TR_jD][T3D\;T5D\;TR_kD]\\
\hfill &= \det T^3 \det D^3 ([15R_i][26R_j][34R_k]-[16R_i][24R_j][35R_k]).
\end{align*}
At this stage, we can see that
\[QS(T\ell_{123}D; TR_iD, TR_jD, TR_kD) =0 \quad \Longleftrightarrow  \quad QS(\ell_{123}; R_i, R_j, R_k) =0\] which completes the proof. The same argument applies analogously to other choices of lines.
\end{proof}

In \cite{Ric, Stu}, the vanishing of the bracket polynomials $QS(\ell; R_3)$ is proved to characterise the liftability of six points in $\PP^1_{\CC}$ to a quadrilateral set. More generally, the whole family of $QS$-polynomials just introduced offers a characterisation of the liftability of a 6-tuple of collinear points in $\PP^2_{\CC}$ to a quadrilateral set. In particular, we prove the following result.

\begin{theorem}\label{TFAE quad set}
    Let $r$ be a line in $\PP^2_{\CC}$ and $1, \dots, 6$ distinct points of $r$. Consider the collection $\LL=\{\ell_{123},\ell_{156},\ell_{246},\ell_{345}\}$, where $\ell_{ijk}$ is the combinatorial line consisting of points $i,j$, and $k$. Then, the following statements are equivalent:
    \begin{itemize}
        \item[$i$.] The points $1, \dots, 6$ are the projective image of a quadrilateral set.
        \item[$ii$.] The polynomials $QS(\ell; P^1, P^2, P^3)$ vanish for any $\ell \in \LL$ and any $P^1,P^2,P^3 \in \PP^2_{\CC}$.
        \item[$iii$.] The polynomials $QS(\ell; R_i, R_j, R_k)$ vanish for any $\ell \in \LL$ and any $(i,j,k) \in \{1,2,3\}^3$.
        \item[$iv$.] The polynomials $QS(\ell_{123}; R_i, R_j, R_k)$ vanish for any $(i,j,k) \in \{1,2,3\}^3$.
        \item[$v$.] The polynomials $QS(\ell_{123}; R_i, R_j, R_k)$ vanish for any $(i,j,k) \in \{1,2,3\}^3$ with $i \leq j \leq k$.
    \end{itemize}
\end{theorem}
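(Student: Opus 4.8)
The plan is to prove the equivalences around the cycle $(i)\Rightarrow(ii)\Rightarrow(iii)\Rightarrow(iv)\Rightarrow(v)\Rightarrow(i)$. Three of these implications cost nothing, being pure specialisations: $(iii)$ is $(ii)$ with the $P^j$ ranging only over $\{R_1,R_2,R_3\}$, $(iv)$ is $(iii)$ with $\ell$ fixed to $\ell_{123}$, and $(v)$ keeps only the triples with $i\le j\le k$ out of $(iv)$. So the substance is concentrated in $(i)\Rightarrow(ii)$ and $(v)\Rightarrow(i)$.

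For $(i)\Rightarrow(ii)$ I would deduce the claim from Theorem~\ref{CQS} after a short projection bookkeeping step. Suppose $1,\dots,6$ is the image under the projection $\pi_O$ from a point $O$ onto $r$ of a genuine quadrilateral set $\tilde 1,\dots,\tilde 6$ in $\PP^2_{\CC}$. The vanishing of $QS(\ell;P^1,P^2,P^3)$ evaluated on a $6$-tuple is unchanged if we act by a common $T\in GL_3$ on the $6$-tuple, on $P^1,P^2,P^3$, and on $O$ (it merely rescales by $(\det T)^3$), so I may assume $O=(0,0,1)$ and $r=\{z=0\}$. Then $\pi_O(\tilde m)$ is obtained from $\tilde m$ by deleting its third coordinate, and a one-line determinant expansion yields $[\pi_O(\tilde m_a)\,\pi_O(\tilde m_b)\,Q]=q_3\,[\tilde m_a\,\tilde m_b\,O]$ for every $Q=(q_1,q_2,q_3)$. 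Plugging this into the definition of $QS(\ell;P^1,P^2,P^3)$ pulls out the scalar $p^1_3p^2_3p^3_3$ and reduces the claim to $QS(\ell;O,O,O)(\tilde 1,\dots,\tilde 6)=0$, which is precisely Theorem~\ref{CQS}.

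For $(v)\Rightarrow(i)$ I would pass to the collinearity matrix of Construction~\ref{collinearity matrix}. Fix a centre $O\notin r$; using Lemma~\ref{projective invariant property} (which carries the frame along with the configuration) I may assume, as in that construction, that $O=(0,0,1)$, $r=\{z=0\}$ and $i=(x_i,1,0)$ with the $x_i$ distinct, while the frame becomes some fixed basis $R'_1,R'_2,R'_3$ of $\CC^3$ with third coordinates $c_1,c_2,c_3$ not all zero. Writing $[ab]:=x_a-x_b$, the collinearity matrix $\Lambda$ of $\CCC_{QS}$ is the explicit $4\times 6$ matrix whose rows encode the collinearities $123$, $156$, $246$, $345$, and the brackets $[ab]$ appearing in it are all nonzero. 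On the lifted points $(x_a,1,0)$ one has $[a\,b\,R'_i]=c_i[ab]$, hence $QS(\ell_{123};R'_i,R'_j,R'_k)=c_i c_j c_k\bigl([15][26][34]-[16][24][35]\bigr)$ for all $i,j,k$; choosing an index $i$ with $c_i\ne0$, condition $(v)$ forces the single scalar relation $[15][26][34]=[16][24][35]$. This relation is exactly the vanishing of the $4\times4$ minor of $\Lambda$ on the columns $\{3,4,5,6\}$ (which factors as $\pm[12]$ times the bracket above), and a short computation with the Plücker relations among the $[ab]=x_a-x_b$ shows it is equivalent to the linear dependence of the four rows of $\Lambda$, i.e.\ to $\rk\Lambda\le 3$. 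Then Lemma~\ref{dimesion space of lifts} gives a non-trivial solution of $\Lambda z=0$, and by the maximality of the matroid $QS$ (Example~\ref{example:non-maximal}) any non-trivial lift is already a non-degenerate realisation of $\CCC_{QS}$; projecting it from $O$ returns $1,\dots,6$, so $(i)$ holds.

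The principal obstacle is the final equivalence inside $(v)\Rightarrow(i)$: verifying that the single relation $[15][26][34]=[16][24][35]$ coming from $(v)$ really encodes the full rank drop $\rk\Lambda\le 3$ and not merely the vanishing of one maximal minor --- that is, that the two further dependence relations among the rows of $\Lambda$ are consequences of it via the Plücker identities. This is the classical ``one relation characterises a quadrilateral set'' phenomenon, which must be checked concretely in the present bracket notation. By contrast $(i)\Rightarrow(ii)$ is routine once the bracket identity for the projection is written down, and $(ii)\Rightarrow(iii)\Rightarrow(iv)\Rightarrow(v)$ is immediate.
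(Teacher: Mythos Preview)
Your proof is correct, but it takes a different route from the paper's, so a brief comparison is in order.

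The paper does not close the cycle at $(v)\Rightarrow(i)$. Instead it proves each reverse step separately: $(v)\Rightarrow(iv)$ via an explicit table expressing each unordered $QS(\ell_{123};R_i,R_j,R_k)$ as a combination of the ordered ones plus the four collinearity brackets; $(iv)\Rightarrow(iii)$ via the projective self-symmetries of the quadrilateral set that permute the four lines; $(iii)\Rightarrow(ii)$ by multilinearity; and $(ii)\Rightarrow(i)$ by a computational check that vanishing of \emph{all} the $QS$ polynomials forces the collinearity matrix $\Lambda_{QS}$ to drop rank. Each of these intermediate steps is actually an equality of ideals, and the paper later reuses those equalities when proving that the fourteen polynomials minimally generate $I_{QS}$.

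Your shortcut $(v)\Rightarrow(i)$ is cleaner for the theorem at hand: after transporting the frame along with the points (which is exactly what Lemma~\ref{projective invariant property} licenses), $(v)$ collapses to the single classical relation $[15][26][34]=[16][24][35]$, and you then argue that this one relation forces $\rk\Lambda_{QS}\le 3$. That last equivalence is the only point that needs care, and you flag it correctly. It does hold: solving for a left-kernel vector of $\Lambda_{QS}$ from the first three columns and substituting into the last three gives the three conditions $[13][26][45]=[12][35][46]$, $[16][23][45]=[12][34][56]$, $[15][23][46]=[13][24][56]$, and two applications of the three-term identity $[ij][kl]-[ik][jl]+[il][jk]=0$ reduce each of them to $\pm([15][26][34]-[16][24][35])$. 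So the single relation really does govern all $4\times4$ minors.

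Your argument for $(i)\Rightarrow(ii)$ via the bracket identity $[\pi_O(\tilde a)\,\pi_O(\tilde b)\,Q]=q_3\,[\tilde a\,\tilde b\,O]$ is also a nice streamlining of the paper's version; note that when some $p^j_3=0$ the factor $p^1_3p^2_3p^3_3$ already kills the expression, so there is no residual case to treat. The trade-off is that you do not obtain the intermediate ideal equalities the paper records (and later uses in Theorem~\ref{Generators}); if you need those, you would have to supply them separately.
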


\begin{proof}
It is immediate to see that $(ii) \Longrightarrow (iii) \Longrightarrow (iv) \Longrightarrow (v)$.\\

\textbf{$(i) \Longrightarrow (ii)$} By Lemma~\ref{projective invariant property}, the vanishing of the $QS$ polynomials is a projective invariant property, thus when six collinear points are the projective image of a quadrilateral set, $QS(\ell; P^1, P^2, P^3) = 0$ where $\ell, P^1, P^2$, and $P^3$ satisfy the assumptions of Theorem~\ref{CQS}.

\smallskip

\textbf{$(ii) \Longrightarrow (i)$} Conversely, let $1,\ldots,6$ be collinear points that make the polynomials $QS(\ell; P^1,P^2;P^3)$ vanish for $\ell \in \{\ell_{123},\ell_{156},\ell_{246},\ell_{345}\}$.
We then computationally verify that the vanishing of the $QS$ polynomials is equivalent to requiring the collinearity matrix $\Lambda_{QS}$ not to have maximal rank.
$$ \Lambda_{QS} = \left( \begin{matrix}
[23] & -[13] & [12] & 0 & 0 & 0 \\
[56] & 0 & 0 & 0 & -[16] & [15] \\
0 & [46] & 0 & -[26] & 0 & [24] \\
0 & 0 & [45] & -[35] & [34] & 0    
\end{matrix} \right)$$
As a consequence, the linear system $\Lambda_{QS} (z_1 \dots z_6)^t = (0 \dots 0)^t$ has a solution space of dimension at least 3. In other words, by Construction \ref{collinearity matrix} there exists a non-degenerate quadrilateral set whose image via the projection through $P$ on the line $r$ consists of exactly points $1, \dots, 6$.

\smallskip

\textbf{$(iii) \Longrightarrow (ii)$} The implication is followed by the following equalities of ideals in $\CC[x_1, \dots, z_6]$:
\begin{multline*}
\langle [123], [156], [246], [345], QS(\ell; P^1, P^2, P^3) \hbox{ for } \ell \in \LL \hbox{ and } P^1,P^2,P^3 \in \PP^2_{\CC} \rangle = \\ =\langle [123], [156], [246], [345], QS(\ell; R_i, R_j, R_k) \hbox{ for } \ell \in \LL \hbox{ and } (i,j,k) \in \{1,2,3\}^3\rangle.\end{multline*}
 However, the equality of the two ideals holds by multi-linearity of determinants.

 \smallskip

\textbf{$(iv) \Longrightarrow (iii)$} Similarly to the previous implication, we check that:
\begin{multline*}
\langle [123], [156], [246], [345], QS(\ell; R_i, R_j, R_k) \hbox{ for } \ell \in \LL \hbox{ and } (i,j,k) \in \{1,2,3\}^3\rangle = \\ = \langle [123], [156], [246], [345], QS(\ell_{123}; R_i, R_j, R_k) \hbox{ for } (i,j,k) \in \{1,2,3\}^3\rangle.\end{multline*} More generally, if we fix a line $\ell$, the polynomials $QS(\ell; R_i, R_j, R_k)$ together with $[123]_X,  [156]_X, [246]_X,$ and $[345]_X$ generate the corresponding polynomials for the other lines.

Assuming that $QS(\ell_{123}; R_i, R_j, R_k) =0$ for any $(i,j,k) \in \{1,2,3\}^3$ for any line $\ell \in \LL \setminus \{\ell_{123}\}$, we construct a projection $\phi$ that sends $\ell_{123}$ to $\ell$ and keeps the quadrilateral set globally fixed (so that the intersection points in $\PPPP$ are just permuted by $\phi$).
We can define these projections explicitly. The points $\{1,2,5,4\}$ are a frame of reference for the projective plane and each permutation of these four points defines uniquely a projection $\phi$ that permutes the 3 diagonal points ($3,6, 14 \wedge 25$). If we add the requirement that $14 \wedge 25$ is fixed (which is necessary for the $\phi$-stability of the quadrilateral set), we find the three desired projections.
By Lemma~\ref{projective invariant property} the vanishing of $QS(\ell_{123}; R_i, R_j, R_k) =0$ is a projective invariant property. Hence, to obtain all the polynomials $QS (\ell; R_i, R_j, R_k)$ we can choose the generators among the polynomials $QS(\ell_{123}, R_i, R_j, R_k)$.

\smallskip

\textbf{$(v) \Longrightarrow (iv)$} We check the following equality of ideals in $\CC[x_1, \dots, z_6]$:
\begin{multline*}
\langle [123], [156], [246], [345], QS(\ell_{123}; R_i, R_j, R_k) \hbox{ for } (i,j,k) \in \{1,2,3\}^3\rangle = \\ = \langle [123], [156], [246], [345], QS(\ell_{123}; R_i, R_j, R_k) \hbox{ for } (i,j,k) \in \{1,2,3\}^3 \hbox{ with } i \leq j \leq k \rangle.
\end{multline*}
We point out that a choice of $(i,j,k)$ determines the letters appearing in the terms of $QS(\ell_{123}; R_i, R_j, R_k)$. For example, $QS(\ell_{123}; R_1, R_1, R_2)$ is the sum of monomials of degree 6, where each of them are the product of one $x$ coordinate, two $y$ coordinates, and three $z$ coordinates. The same holds for $QS(\ell_{123}; R_1, R_2, R_1)$ and $QS(\ell_{123}; R_2, R_1, R_1)$. In spite of that, $$QS(\ell_{123}; R_1, R_1, R_2) \neq QS(\ell_{123}; R_1, R_2, R_1) \neq QS(\ell_{123}; R_2, R_1, R_1).$$ 
In other words, the selection of $(i,j,k)$ determines the letter multi-degree of the $QS$ polynomial. However, multiple choices of $(i,j,k)$ can yield the same letter multi-degree. Specifically, having a generator per letter multi-degree is sufficient to generate all the polynomials. The remaining $QS$-polynomials with the same multi-degree are obtained by adding the corresponding generator to a polynomial combination of $[123]$, $[156]$, $[246]$, and $[345]$, with coefficients stored in Table~\ref{tab:coefficients}.
\end{proof}

    \begin{table}[h!]
        \centering
        \begin{tabular}{| c | c || c | c | c | c|}
         \hline
         $i,j,k$ & Gener. & Coeff. of $[123]$ & Coeff. of $[156]$ & Coeff. of $[246]$ & Coeff. of $[345]$ \\
         \hline \hline  $ 1,2,1 $ & $ 1,1,2 $ & $ -y_6z_4z_5+y_5z_4z_6 $ & $ y_3z_2z_4-y_2z_3z_4 $ & $ -y_5z_1z_3+y_1z_3z_5 $ & $  -y_6z_1z_2+y_1z_2z_6 $\\
         \hline         $ 2,1,1 $ & $ 1,1,2 $ & $ -y_6z_4z_5+y_4z_5z_6 $ & $ y_4z_2z_3-y_2z_3z_4 $ & $ -y_3z_1z_5+y_1z_3z_5 $ & $  -y_6z_1z_2+y_2z_1z_6 $\\
         \hline  \hline $ 1,3,1 $ & $ 1,1,3 $ & $ y_4y_6z_5-y_4y_5z_6 $ & $ -y_3y_4z_2+y_2y_4z_3 $ & $ y_3y_5z_1-y_1y_3z_5 $ & $ y_2y_6z_1-y_1y_2z_6 $\\
         \hline         $ 3,1,1 $ & $ 1,1,3 $ & $ y_5y_6z_4-y_4y_5z_6 $ & $ -y_3y_4z_2+y_2y_3z_4 $ & $ y_3y_5z_1-y_1y_5z_3 $ & $ y_1y_6z_2-y_1y_2z_6$\\
         \hline  \hline $ 2,1,2 $ & $ 1,2,2 $ & $ x_5z_4z_6-x_4z_5z_6 $ & $ -x_4z_2z_3+x_3z_2z_4 $ & $ -x_5z_1z_3+x_3z_1z_5 $ & $ -x_2z_1z_6+x_1z_2z_6  $\\
         \hline         $ 2,2,1 $ & $ 1,2,2 $ & $ x_6z_4z_5-x_4z_5z_6 $ & $ -x_4z_2z_3+x_2z_3z_4 $ & $ x_3z_1z_5-x_1z_3z_5 $ & $ x_6z_1z_2-x_2z_1z_6  $\\
         \hline  \hline $ 1,3,2 $ & $ 1,2,3 $ & $ -x_4y_6z_5+x_4y_5z_6 $ & $ x_4y_3z_2-x_4y_2z_3 $ & $ -x_3y_5z_1+x_3y_1z_5 $ & $ -x_2y_6z_1+x_2y_1z_6 $\\
         \hline         $ 2,1,3 $ & $ 1,2,3 $ & $ -x_5y_4z_6+x_4y_5z_6 $ & $ x_4y_3z_2-x_3y_4z_2 $ & $ x_5y_3z_1-x_3y_5z_1 $ & $ x_2y_1z_6-x_1y_2z_6 $\\
         \hline         $ 2,3,1 $ & $ 1,2,3 $ & $ -x_6y_4z_5+x_4y_5z_6 $ & $ x_4y_3z_2-x_2y_4z_3 $ & $ -x_3y_5z_1+x_1y_3z_5 $ & $ -x_6y_2z_1+x_2y_1z_6 $\\
         \hline         $ 3,1,2 $ & $ 1,2,3 $ & $ -x_5y_6z_4+x_4y_5z_6 $ & $ x_4y_3z_2-x_3y_2z_4 $ & $ -x_3y_5z_1+x_5y_1z_3 $ & $ -x_1y_6z_2+x_2y_1z_6 $\\
         \hline         $ 3,2,1 $ & $ 1,2,3 $ & $ -x_6y_5z_4+x_4y_5z_6 $ & $ x_4y_3z_2-x_2y_3z_4 $ & $ -x_3y_5z_1+x_1y_5z_3 $ & $ -x_6y_1z_2+x_2y_1z_6 $\\
         \hline \hline  $ 3,1,3 $ & $ 1,3,3 $ & $ x_5y_4y_6-x_4y_5y_6 $ & $ -x_4y_2y_3+x_3y_2y_4 $ & $ -x_5y_1y_3+x_3y_1y_5 $ & $ -x_2y_1y_6+x_1y_2y_6 $\\
         \hline         $ 3,3,1 $ & $ 1,3,3 $ & $ x_6y_4y_5-x_4y_5y_6 $ & $ -x_4y_2y_3+x_2y_3y_4 $ & $ x_3y_1y_5-x_1y_3y_5 $ & $ x_6y_1y_2-x_2y_1y_6 $\\
         \hline \hline  $ 2,3,3 $ & $ 2,2,3 $ & $ x_4x_6z_5-x_4x_5z_6 $ & $ -x_3x_4z_2+x_2x_4z_3 $ & $ x_3x_5z_1-x_1x_3z_5 $ & $ x_2x_6z_1-x_1x_2z_6 $\\
         \hline         $ 3,2,2 $ & $ 2,2,3 $ & $ x_5x_6z_4-x_4x_5z_6 $ & $ -x_3x_4z_2+x_2x_3z_4 $ & $ x_3x_5z_1-x_1x_5z_3 $ & $ x_1x_6z_2-x_1x_2z_6  $\\
         \hline \hline  $ 3,2,3 $ & $ 2,3,3 $ & $ -x_5x_6y_4+x_4x_6y_5 $ & $ x_2x_4y_3-x_2x_3y_4 $ & $ x_1x_5y_3-x_1x_3y_5 $ & $ x_2x_6y_1-x_1x_6y_2 $\\
         \hline         $ 3,3,2 $ & $ 2,3,3 $ & $ -x_5x_6y_4+x_4x_5y_6 $ & $ x_3x_4y_2-x_2x_3y_4 $ & $ -x_3x_5y_1+x_1x_5y_3 $ & $ -x_1x_6y_2+x_1x_2y_6  $\\
         \hline
         \end{tabular}
        \caption{This table allows to reconstruct how the polynomials which have been removed from item $(iii)$ to item $(iv)$ can be written as a polynomial combination of the generators in $(iv)$. Each block of rows corresponds to a different letter multi-degree. In the first column, we select the excluded choices of $R_i, R_j, R_k$, and, in the second one, there is the generator of the ideal in $(iv)$ having the same letter multi-degree. Each of the excluded polynomials is the sum of the corresponding generator and a combination of $[123], [156], [246],$ and $[345]$ whose coefficients are specified in the corresponding entry of the table.}
        \label{tab:coefficients}
    \end{table}

Applying the characterisation in Theorem~\ref{TFAE quad set}, we now compute a minimal generating set for $I_{QS}$. 

\begin{theorem}\label{Generators}
The associated ideal $I_{QS}$ of the  quadrilateral set is minimally generated as: 
\begin{equation}\label{generators}
I_{QS} =  \langle [123]_X,  [156]_X, [246]_X, [345]_X , QS(\ell_{123}; R_i, R_j, R_k) \, \forall \, i \leq j\leq k, \hbox{ with } i,j,k \in \{1,2,3\}\rangle.\end{equation}
\end{theorem}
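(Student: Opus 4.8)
The overall strategy is to show the two ideals in \eqref{generators} coincide, and then to verify minimality of the stated generating set. For the containment $\supseteq$, the four determinants $[123]_X, [156]_X, [246]_X, [345]_X$ clearly lie in $I_{QS}$ since they are circuit polynomials, and each $QS(\ell_{123}; R_i, R_j, R_k)$ lies in $I_{QS}$ by Theorem~\ref{CQS} (they vanish on every realisation, hence on $V_{QS} = \overline{\Gamma_{QS}}$). For the reverse containment $\subseteq$, I would invoke Proposition~\ref{radical}: since $QS$ is maximal (Example~\ref{example:non-maximal}) and $\CCC_{QS}$ is quasi-liftable, $I_{QS}$ is contained in the radical of the ideal $I$ generated by the collinearities together with the extensions of the $n-2 = 4$ minors of $\Lambda_{QS}$. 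The content of Theorem~\ref{TFAE quad set} (specifically the chain $(v)\Rightarrow(iv)\Rightarrow(iii)\Rightarrow(ii)$ and the ideal equalities recorded there, together with Table~\ref{tab:coefficients}) identifies the $4\times 4$ minors of $\Lambda_{QS}$ and their extensions, modulo the four collinearity polynomials, with the polynomials $QS(\ell_{123}; R_i, R_j, R_k)$ for $i \le j \le k$. Hence $I$ equals the ideal on the right-hand side of \eqref{generators}, so $I_{QS} \subseteq \sqrt{I}$.

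To close the loop and get equality rather than just $I_{QS} \subseteq \sqrt{I}$, I would argue that $I$ is already radical and equals $I_{QS}$. One clean way: by Theorem~\ref{circuit variety of liftable conf.} applied to the liftable sub-configurations, and Corollary~\ref{decomposition quasi-liftable}, we have $\sqrt{I_{\C(QS)}} = I_0 \cap I_{QS}$ where $V_0$ is the variety of six collinear points; since every $QS$-polynomial vanishes on $V_0$ as well (the brackets $[P_iP_jP^k]$ all involve the same line, making the two products equal — or more directly, $V_0 \subseteq V(QS(\ell;R_i,R_j,R_k))$ because collinear points trivially satisfy all liftability conditions), the ideal on the right of \eqref{generators} is contained in $I_0 \cap I_{QS} = \sqrt{I_{\C(QS)}}$. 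Combined with $V(I) \subseteq V_{QS}$ from Proposition~\ref{radical} and the fact that $V_{QS} \subseteq V_0 \cup V_{QS} = V(I_{\C(QS)})$, a dimension/component count — using that $V_{QS}$ is irreducible by Theorem~\ref{decomposition liftable} and that $V_0 \subsetneq V(I)$ is impossible since $V(I)$ contains points with three non-collinear points — pins down $V(I) = V_{QS}$ and $I = I_{QS}$. Alternatively, and perhaps more simply for the write-up, one directly checks (this is a finite Gröbner basis computation in $\CC[x_1,\dots,z_6]$, the same kind already referenced in the proof of Theorem~\ref{TFAE quad set}) that the listed ideal is radical, equals the saturation $\langle [123]_X,[156]_X,[246]_X,[345]_X\rangle : J_{QS}^\infty$, and thus equals $I_{QS}$ by Proposition~\ref{ideal matroid variety}.

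For minimality, I would count generators by multidegree. The four collinearity generators $[123]_X, [156]_X, [246]_X, [345]_X$ are the only degree-3 generators and are manifestly needed, since no degree-3 element of $I_{QS}$ can be a combination of the degree-6 $QS$-polynomials, and the four of them are linearly independent and irreducible in distinct variables so none is redundant among themselves. For the degree-6 generators: the polynomials $QS(\ell_{123}; R_i, R_j, R_k)$ with $i \le j \le k$ number $\binom{3+2}{3} = 10$, and each has a distinct letter multidegree $(d_x, d_y, d_z)$ with $d_x + d_y + d_z = 6$ and each $d_\bullet \le 3$ — precisely the ten triples. Working in the quotient $R/\langle [123]_X,[156]_X,[246]_X,[345]_X\rangle$ graded by letter multidegree, each $QS$-polynomial sits in its own graded piece, so a putative expression of one of them in terms of the others would have to use only the collinearity generators, i.e. it would say $QS(\ell_{123};R_i,R_j,R_k) \in \langle [123]_X,[156]_X,[246]_X,[345]_X\rangle$; this I would rule out by exhibiting a realisation-adjacent point (a degenerate configuration lying on $V(\text{collinearities})$ but off $V(QS\text{-poly}))$ or by a direct normal-form computation, showing each $QS$-polynomial is nonzero in the quotient.

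\textbf{Main obstacle.} The principal difficulty is the step identifying $I$ (the ideal of Proposition~\ref{radical}, generated by collinearities and extensions of $4\times 4$ minors of $\Lambda_{QS}$) with the ideal generated by the $QS(\ell_{123};R_i,R_j,R_k)$ for $i \le j \le k$, and establishing that this ideal is already radical (so that the "$\sqrt{\ }$" in Proposition~\ref{radical} can be removed). The reduction from all $(i,j,k) \in \{1,2,3\}^3$ to the ordered triples, and from all four lines to $\ell_{123}$, relies on the explicit ideal membership relations encoded in Table~\ref{tab:coefficients} and on the three symmetry projections constructed in the proof of Theorem~\ref{TFAE quad set}; assembling these into a proof that the \emph{ten} ordered polynomials plus four collinearities already cut out $V_{QS}$ scheme-theoretically (not just set-theoretically) is where the real work lies, and is the point at which a computer algebra verification is genuinely invoked.
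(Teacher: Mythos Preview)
Your overall architecture is right --- show $I \subseteq I_{QS}$ via Theorem~\ref{CQS}, verify radicality of $I$ by computation, then argue at the level of varieties; and your minimality argument via distinct letter multidegrees in the quotient by the four collinearity brackets is essentially the paper's. But your geometric argument for the reverse inclusion contains a genuine error, and in avoiding it you also miss the step the paper actually uses.

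The error is your claim that every $QS$-polynomial vanishes on $V_0$ (``collinear points trivially satisfy all liftability conditions''). This is false, and is precisely what Theorem~\ref{TFAE quad set} denies: a \emph{generic} $6$-tuple of collinear points is not the projective image of a quadrilateral set, and the $QS$-polynomials detect exactly this obstruction. So $V_0 \not\subseteq V(I)$, and your component-count paragraph (together with the sentence ``$V_0 \subsetneq V(I)$ is impossible since $V(I)$ contains points with three non-collinear points'', whose justification points in the wrong direction) does not go through.

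The paper's route is more direct and does not pass through Proposition~\ref{radical}. Having checked (via a square-free initial ideal under GRevLex) that $I$ is radical, one takes $A \in V(I)$. Since $A$ satisfies the four collinearity brackets, $A \in V_{\C(QS)} = V_0 \cup V_{QS}$ by Corollary~\ref{decomposition quasi-liftable}. If $A \in V_{QS}$ we are done. If $A$ lies in $V_0$ (six collinear points, or the degenerate case of five collinear points plus a loop, handled separately), then $A$ also satisfies the ten $QS$-polynomials; now Theorem~\ref{TFAE quad set}$(v)\Rightarrow(i)$ says $A$ is the projective image of a quadrilateral set, so an arbitrarily small lifting (as in Lemma~\ref{perturbation}) places $A$ in the Euclidean, hence Zariski, closure of $\Gamma_{QS}$, i.e.\ $A \in V_{QS}$. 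This is the missing idea: the $QS$-polynomials are used not to kill $V_0$ wholesale, but to certify that every collinear point of $V(I)$ is liftable and hence already lies in $V_{QS}$.

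Your detour through Proposition~\ref{radical} is thereby unnecessary, and the obstacle you flag --- identifying the ideal generated by extensions of the $4\times 4$ minors of $\Lambda_{QS}$ (which are products of four $3\times 3$ brackets, hence degree~$12$) with the ideal generated by the degree-$6$ $QS$-polynomials modulo collinearities --- is genuinely not a consequence of Theorem~\ref{TFAE quad set}, which is a statement about vanishing loci on the line rather than an ideal identity in $\CC[x_1,\dots,z_6]$. The paper simply bypasses this.

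For minimality your plan is correct in outline; the paper carries out the last step (showing no single $QS$-polynomial lies in $\langle [123],[156],[246],[345]\rangle$) not by normal forms but by a careful variable-support analysis (Table~\ref{tab:missingvariables}) together with an explicit monomial argument excluding the letter-multidegree-$(1,2,3)$ cases.
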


\begin{proof}
    For the ease of notation, we denote $I$ for the ideal in the right hand side of \eqref{generators}.
   By Theorem~\ref{CQS}, we have that $I \subset I_{QS}$.
By numerical computations available on GitHub,  \begin{center}        \url{https://github.com/ollieclarke8787/PointAndLineConfigurations}.
        \end{center}
 we verify that the natural GRevLex term order gives a square-free initial ideal for $I$.  Thus, the ideal $I$ is radical, and verifying that $I \supset I_{QS}$ is equivalent to show that $V(I) \subset V(I_{QS})$. 
 
 Let $A \in \CC^{18}$ be a point in $V(I)$. The coordinates $A_1^1, A_1^2, A_1^3, \dots, A_6^1, A_6^2, A_6^3$ of $A$ can be seen as the $(x,y,z)$-coordinates of 6 points in the projective plane, that can be represented by the $3 \times 6$ matrix:
    \[ A = \left( \begin{matrix}
        A_1^1 & A_2^1 & \dots & A_6^1 \\
        A_1^2 & A_2^2 & \dots & A_6^2 \\
        A_1^3 & A_2^3 & \dots & A_6^3
    \end{matrix} \right)\]
    The columns of $A$ generate a realisable matroid $M_A$ which corresponds to a point-line configuration $\CCC_A$ (as the columns of $A$ are coordinates of points).
  Since $A \in V(I)$, $A$ satisfies the determinantal collinearity conditions. Hence, $A$ is a point in the combinatorial closure of the matroid $QS$. Corollary~\ref{decomposition quasi-liftable} ensures that after a suitable perturbation, $A$ becomes either a realisation of the matroid corresponding to a 6-pointed line or a realisation of $QS$. 
  
    The fact that $A \in V(I)$ implies that the coordinates of $A$ satisfy also the $QS$ polynomials, implying that, if the points of $A$ lie on a line, then $A$ is the projective image of a quadrilateral set and therefore becomes the realisation of a quad-set with an arbitrary small lifting. This holds also if only some of the points in $C_A$ are loops. To check this is enough to consider the case where we have 5 points on a line and a loop. We want to verify the existence of a sixth point in the line such that $1, \dots, 6$ is the projective image of a quadrilateral set. To this purpose, following Figure~\ref{fig:quad-set lifting}, we consider the two lines of the quadrilateral-set which are not involved by the loop point and represent them as collinear points in the fibres of the non-loops. At this stage, the sixth point of the quadrilateral set is uniquely determined by the intersection of the corresponding ghost lines, and its projection on $\ell$ is the point we wanted. 
Therefore, $A$ is in the Euclidean closure of $\Gamma_{QS}$. Hence, $A \in V(I_{QS})$.

    \begin{figure}[h]
        \centering
        \includegraphics[scale=0.3]{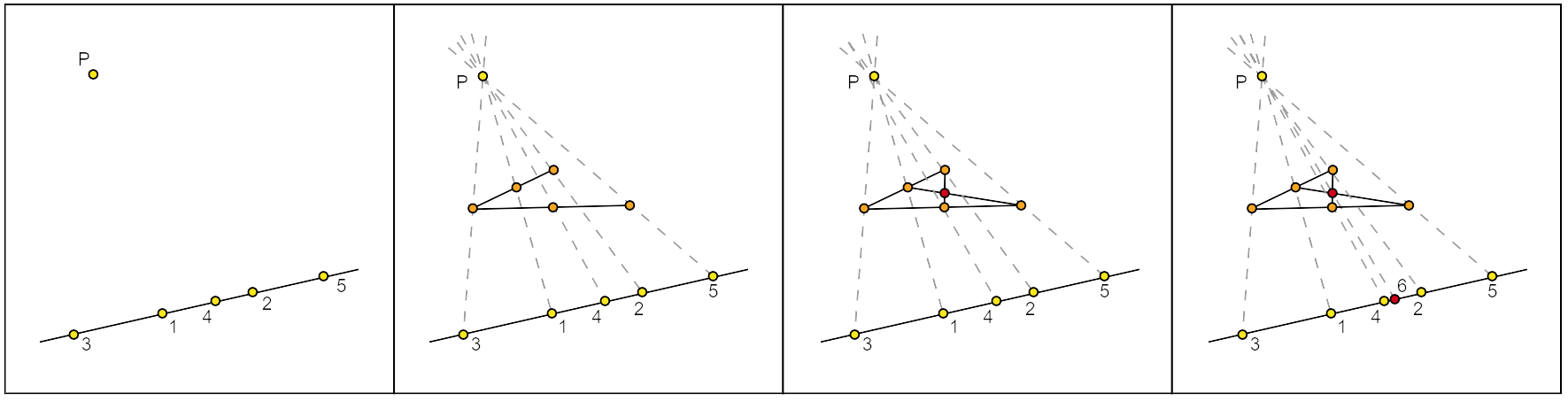}
        \caption{From left to right, this figure justify the existence of at least a choice of 6 completing $1, \dots, 5$ to a projective image of a quadrilateral set.}
        \label{fig:quad-set lifting}
    \end{figure}
 
We now prove the minimality of the generating set. In particular, we show that all the polynomials we used to generate $I$ are independent.
    By reasons of degree and point multi-degree, the generators of degree 3 are polynomially independent over $R$. Hence, it is enough to check that generators of degree 6 having different letter multi-degree are polynomially independent over $\CC[x_1, \dots, z_6]$. The fact that all the generators have point multi-degree $(1,1,1,1,1,1)$ and letter multi-degree fixed by the partition of variables plays a central role in this proof. 

    First, we note that any choice of $(i,j,k) \in {1,2,3}^3$ results in certain variables not appearing in the polynomial $QS(\ell_{123}; R_i, R_j, R_k)$. In Table~\ref{tab:missingvariables} below, each row corresponds to a choice of $(i,j,k)$ with $i \leq j \leq k$ and indicates the variables that do not appear in the corresponding generator.
\begin{table}[h]
    \centering
    \begin{tabular}{ |c||c|c|c|c|c|c| }
         \hline
         $(i,j,k)$ & 1 & 2 & 3 & 4 & 5 & 6 \\
         \hline \hline  $(1,1,1)$ & $x_1$ & $x_2$ & $x_3$ & $x_4$ & $x_5$ & $x_6$ \\
         \hline $(2,2,2)$ & $y_1$ & $y_2$ & $y_3$ & $y_4$ & $y_5$ & $y_6$ \\
         \hline $(3,3,3)$ & $z_1$ & $z_2$ & $z_3$ & $z_4$ & $z_5$ & $z_6$ \\
         \hline \hline  $(1,1,2)$ & $x_1$ & $x_2$ & $y_3$ & - & - & $x_6$ \\
         \hline  $(1,1,3)$ & $x_1$ & $x_2$ & $z_3$ & - & - & $x_6$ \\
         \hline  $(1,2,2)$ & $x_1$ & $y_2$ & $y_3$ & $y_4$ & - & - \\
         \hline  $(1,3,3)$ & $x_1$ & $z_2$ & $z_3$ & $z_4$ & - & - \\
         \hline  $(2,2,3)$ & $y_1$ & $y_2$ & $z_3$ & - & - & $y_6$ \\
         \hline $(2,3,3)$ & $y_1$ & $z_2$ & $z_3$ & $z_4$ & - & - \\
         \hline \hline  $(1,2,3)$ & $x_1$ & $y_2$ & $z_3$ & - & - & - \\
         \hline
    \end{tabular}
    \caption{For each generator of degree 6, identified by a choice of $i,j,$ and $k$ in the first column, the table shows which variables do not appear in the polynomial.}
    \label{tab:missingvariables}
\end{table}

Keeping this in mind, we can deduce that the generators of letter multi-degree $(3,3,0)$, $(3,0,3)$, $(0,3,3)$, namely, the ones corresponding to the first three rows in Table~\ref{tab:missingvariables}, are polynomially independent with respect to the others. For ease of computation, let us focus on the specific case of $QS(\ell_{123}; R_1, R_1, R_1)$, which has a letter multi-degree of $(0,3,3)$ and does not contain any $x$ variables. Since all the other generators are the sum of monomials containing $x$ variables, any polynomial combination of them would result in the sum of monomials with a multi-degree of $(a,b,c)$ where $a > 0$. This demonstrates that $QS(\ell_{123}; R_1, R_1, R_1)$ is not contained in the ideal generated by the other generators. An analogous argument applies to the generators with letter multi-degrees of $(3,0,3)$ and $(3,3,0)$.

\smallskip

Now, let us consider the other generators of degree 6. We show preliminarily the following claim.

\begin{claim} In the notation above, none of the generators having a permutation of $(1,2,3)$ as letter multi-degree, i.e. rows 4 to 9 in Table~\ref{tab:missingvariables}, is contained in $\langle [123]_X,  [156]_X, [246]_X, [345]_X \rangle$.\end{claim}

\begin{proof}[{\bf Proof of the claim}]
If $QS(\ell_{123}; R_1, R_1, R_2)$ were in the ideal $\langle [123]_X, [156]_X, [246]_X, [345]_X \rangle$, it would be a polynomial combination of the four generators with at least one non-zero coefficient of letter multi-degree $(0,1,2)$. Three $x$ variables never appear; namely $x_1, x_2$, and $x_6$: these appear in $[123]_X, [156]_X, [246]_X$. If any of these variables appear in a monomial of the combination, this monomial must be canceled out using the other degree 3 generators containing the same variable.

Now, assume by contradiction that $[123]_X$ is multiplied by a non-zero coefficient $y_lz_mz_n$ of point multi-degree $(0,0,0,1,1,1)$. Here $l \neq 4,5$, for reasons of point multi-degree. So the only possibility is $y_6z_4z_5$. Then, two $x_1$ monomials must be cancelled: $x_1y_2y_6z_3z_4z_5 - x_1y_3y_6z_2z_4z_5$. For this purpose, we have to multiply $[156]_X$ by $y_2z_3z_4+y_3z_2z_4$. The second term gives rise to an $x_6$ monomial which cannot be cancelled by reasons of point multi-degree. A symmetric argument works starting from $[156]_X, [246]_X$. Thus, the coefficients of $[123]_X,  [156]_X, [246]_X$ have to be zero. 
The only possibility left is that $QS(\ell_{123};R_1, R_1, R_2)$ is the product of $[345]_X$ by a homogeneous polynomial of degree 3, letter multi-degree $(0,1,2)$ and point multi-degree $(1,1,0,0,0,1)$. But this can be excluded because such a multiplication cannot give rise to any of the monomials $x_ay_by_cz_4z_5z_d$ appearing in $QS(\ell_{123}; R_1, R_1, R_2)$.

By symmetry, an equivalent strategy can be performed with the generators of letter multi-degree $(1,1,3), (1,2,2), (1,3,3), (2,2,3)$ and $(2,3,3)$.
\end{proof}
Now, by contrary assume that a 6-degree generator is a polynomial combination of the others, i.e.,
\[g = \sum_{i = 1}^4 p_i g_{3,i} + \sum_{i = 1}^9 k_i g_{6,i}\] where $p_i \in R$, $k_i \in \kk$, the $g_{3,i}$'s are the generators of degree 3 and the $g_{6,i}$'s are the generators of degree 6. By reason of letter multi-degree, none of the monomials of $g$ can arise from the generators of 6-th degree. In addition, the product of a monomial for a degree 3 generator either gives rise to monomials of the same letter multi-degree of $g$ or to monomials of the same letter multi-degree of one of the other generators. This implies, in turn, that both $g$ and $\sum_{i = 1}^9 k_i g_{6,i}$ are in $\langle [123]_X,  [156]_X, [246]_X, [345]_X\rangle$. 

If $g$ has a letter multi-degree of $(1,2,3)$ (possibly permuted), the statement follows because it leads to a contradiction with the claim. If $g$ is the generator with a letter multi-degree of $(2,2,2)$, then we can conclude due to the fact that $\sum_{i = 1}^9 k_i g_{6,i} \in \langle [123]_X, [156]_X, [246]X, [345]X\rangle$. Indeed, since the $g{6,i}$'s all have different letter multi-degrees, this implies that at least one of the $g{6,i}$'s is contained in $\langle [123]_X, [156]_X, [246]_X, [345]_X\rangle$, contradicting the claim.
\end{proof}

As an immediate consequence  of Corollary~\ref{decomposition} and Theorem~\ref{Generators}, we have the following:
\begin{corollary}
    The circuit variety $V_{\C (QS)}$ decomposes irreducibly as:
\[
 V_{\C(QS)} = V_0 \cup V_{QS}= V(\langle [i j k] \, | \, \{i,j,k\} \subseteq \{1, \dots, 6\} \rangle) \cup V(I_{QS}).
\]
\end{corollary}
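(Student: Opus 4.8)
The plan is to read this corollary off as the special case $M = QS$ of the decomposition theorem for quasi-liftable configurations (Corollary~\ref{decomposition quasi-liftable}), and then to substitute the explicit presentation of $I_{QS}$ from Theorem~\ref{Generators} together with the standard description of the matroid variety of a set of collinear points. First I would verify that $QS$ meets the hypotheses of Corollary~\ref{decomposition quasi-liftable}: it is simple of rank $3$; reading off the incidences of $\CCC_{QS}$ with lines $\ell_{123},\ell_{156},\ell_{246},\ell_{345}$ shows that each of the six points lies on exactly two of these lines, hence on at most two; the associated graph $G_{\CCC_{QS}}$ is connected, so $\CCC_{QS}$ is connected; and $\CCC_{QS}$ is quasi-liftable, which is among the properties of the quadrilateral set established in this section and already invoked in the proof of Theorem~\ref{Generators} (it is not liftable because the collinearity matrix $\Lambda_{QS}$ appearing in the proof of Theorem~\ref{TFAE quad set} has $\dim\LLL_{\CCC_{QS}} = 2 < 3$ for generic collinear points, while deleting any one of its four lines leaves a three-line configuration that is liftable). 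Corollary~\ref{decomposition quasi-liftable} then yields the irreducible decomposition $V_{\C(QS)} = V_0 \cup V_{QS}$.

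Next I would identify the two components explicitly. By definition $V_0$ is the matroid variety of the configuration consisting of a single line carrying the points $1,\dots,6$, realised in $\CC^{3\times 6}$; its realisation space is the set of $3 \times 6$ matrices of rank exactly two, whose Zariski closure is the determinantal variety $\{X \in \CC^{3\times 6} : \rk X \le 2\} = V\bigl(\langle [ijk]_X : \{i,j,k\} \subseteq \{1,\dots,6\}\rangle\bigr)$. Since the ideal of maximal minors of a generic $3 \times 6$ matrix is prime, this also recovers the primeness of $I_0$ asserted in Corollary~\ref{decomposition quasi-liftable}. For the other component, Theorem~\ref{Generators} gives $V_{QS} = V(I_{QS})$ with $I_{QS} = \langle [123]_X, [156]_X, [246]_X, [345]_X,\ QS(\ell_{123}; R_i, R_j, R_k) \text{ for } i \le j \le k\rangle$. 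Substituting both descriptions into $V_{\C(QS)} = V_0 \cup V_{QS}$ produces the displayed chain of equalities, and irreducibility of the decomposition is inherited from Corollary~\ref{decomposition quasi-liftable}.

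The hard part is essentially nonexistent, which is exactly why this is phrased as an immediate consequence: the two cited results carry all the content, and what remains is only the bookkeeping of checking the hypotheses of Corollary~\ref{decomposition quasi-liftable} for $QS$ (connectedness, at most two lines through each point, quasi-liftability) and invoking the classical fact that the matroid variety of six collinear points in $\PP^2_{\CC}$ is the rank-$\le 2$ determinantal variety cut out by the $3 \times 3$ minors, whose defining ideal is prime.
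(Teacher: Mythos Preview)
Your proposal is correct and follows the same route as the paper: the corollary is stated there as an immediate consequence of Corollary~\ref{decomposition quasi-liftable} and Theorem~\ref{Generators}, and you have simply spelled out the hypothesis-checking and the identification of $V_0$ with the rank-$\le 2$ determinantal variety that the paper leaves implicit.
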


\section{The $3 \times 4$ grid matroid}\label{sec:gri}
In this section, we focus on the $3 \times 4$ grid matroid and its defining equations. This example is chosen based on the work of \cite{PfSt}, where a generating set for the matroid ideal is computed using a specialised algorithm tailored for this configuration. Here, we give a geometric description of the generators. 

\begin{definition}[$n \times m$ grid configuration]
    An $n \times m$ \textit{grid configuration} is a plane point-line configuration consisting of the $n \cdot m$ points of intersection of two pencils of $n$ and $m$ parallel lines.
\end{definition}

    When we take the realisation of a grid, the realisations of two lines of the same pencil will intersect in the projective plane. However, these intersection points will not be part of the realisation itself. 

\begin{example}
    Our main example is the $3\times 4$ grid $G^3_4$. Mirroring our techniques for the quadrilateral set, we begin with the linear point-line configuration with 7 lines and 12 points $\CCC_{G} = \left( \PPPP_{G^3_4}, \LL_{G^3_4}, \II_{G^3_4} \right)$, see Figure~\ref{fig:3x4g}. Following the diagram, the lines are referred to as \textit{rows} and \textit{columns} in $\LL_{G^3_4}$. The underlying simple matroid $G^3_4$ on $[12]$, has circuits $\min \left( \Delta \cup \binom{[12]}{4} \right)$ where $\Delta = \{123, \dots$ $ 10 \, 11 \, 12\}$.\hfill $\diamond$
\end{example}

The matroid $G^3_4$ is realisable over $\CC$ and any realisation is represented as a $3 \times 12$ matrix whose columns are coordinates of points in $\PP^2_{\CC}$, with respect to a reference $\{R_1, R_2, R_3, U\}$. In view of Proposition~\ref{ideal matroid variety}, we study the ideal $I_{G^3_4} = \sqrt{I_{\C\left(G^3_4\right)} \ \colon J_{G^3_4} ^ \infty}$ of the algebraic variety $V_{G^3_4}$.

\begin{notation} 
\begin{itemize}
    \item 
Let $c_{i}$ be a column in $\LL_{G^3_4}$, whose points are $P^1_i, P^2_i$ and $P^3_i$, where the upper index labels the row. Then, the points in $G^3_4 \setminus c_i$ are in natural bijection with a 3 $\times$ 3 matrix.

\item Let $\Sigma$ be the set of permutation matrices $\sigma$ over three elements. For $j = 1,2,3$, we denote as $\sigma^j$ the non-zero entry of the $j-$th row and as $P_{\sigma}^j$ the corresponding point in $G^3_4 \setminus c_i$.

\item For $k \in \{1,2,3,4\} \setminus \{i\} = \{k_1, k_2, k_3\}$, each column $c_k$ has a single point paired with a non-zero entry of $\sigma$. We label $Q^{\sigma,1}_k$ and $Q^{\sigma,2}_k$ the points in $\PPPP_{G^3_4}$ which belong to $c_k \setminus \{P_{\sigma}^j\}$. In particular, $Q^{\sigma,1}_k$ is the point with the lowest index.

\item Let $ P^1,\dots, P^6$ be six points in $\PP^2_{\CC}$, not necessarily distinct. We introduce the polynomials:
\begin{align*}
G^3_4(c_i;P^1,\dots,P^6) &= \sum _{\sigma \in \Sigma}\sgn (\sigma)[P^1_iP_{\sigma}^1P^1][P^2_iP_{\sigma}^2P^2][P^3_iP_{\sigma}^3P^3][Q_{k_1}^{\sigma,1} Q_{k_1}^{\sigma,2}P^4][Q_{k_2}^{\sigma,1} Q_{k_2}^{\sigma,2}P^5][Q_{k_3}^{\sigma,1} Q_{k_3}^{\sigma,2}P^6],\\
G^3_4(c_i;P) &= \sum _{\sigma \in \Sigma}\sgn (\sigma)[P^1_iP_{\sigma}^1P][P^2_iP_{\sigma}^2P][P^3_iP_{\sigma}^3P][Q_{k_1}^{\sigma,1} Q_{k_1}^{\sigma,2}P][Q_{k_2}^{\sigma,1} Q_{k_2}^{\sigma,2}P][Q_{k_3}^{\sigma,1} Q_{k_3}^{\sigma,2}P].
\end{align*}
\end{itemize}
\end{notation}

   The $G^3_4$ polynomials are defined by a sum over all permutation matrices, making them independent of the bijection between the points of $G^3_4 \setminus c_i$ and the entries of a $3 \times 3$ matrix.~This choice only affects the sign of the polynomials.~We~focus~on~configurations that cause the $G^3_4$
   ~polynomials~to~vanish.

\begin{theorem}\label{CG34}
Let $\CCC$ be a $ 3 \times 4$ grid configuration in $\PP_{\CC}^2$. Then, for any column $c_i$ and any six points $P^1,\dots$,$P^6$ in general position w.r.t.~to any couple of points of the configuration, $G^3_4(c_i;P^1,\dots,P^6) = 0$.
\end{theorem}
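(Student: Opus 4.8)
The plan is to follow the proof of Theorem~\ref{CQS}: I will factor $G^3_4(c_i;P^1,\dots,P^6)$ as a product of brackets times a scalar $\det M$ attached to the configuration, and then show that the grid structure forces $\det M=0$. Fix a column $c_i$; since the argument below is uniform in $i$ it suffices to treat one choice. The starting observation is that in each summand indexed by $\sigma\in\Sigma$, every one of the six brackets has two of its three entries collinear in $\CCC$: in $[P_i^j\,P_\sigma^j\,P^j]$ the points $P_i^j,P_\sigma^j$ both lie on the $j$-th row, and in $[Q_{k_t}^{\sigma,1}\,Q_{k_t}^{\sigma,2}\,P^{3+t}]$ the points $Q_{k_t}^{\sigma,1},Q_{k_t}^{\sigma,2}$ both lie on the column $c_{k_t}$. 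Having fixed representatives of the twelve points of $\CCC$, the collinearity of the $j$-th row gives $[P_i^j\,P_\sigma^j\,P^j]=\delta_j^{(\sigma(j))}\,[P_i^j\,E_j\,P^j]$, where $E_j$ is a once-and-for-all chosen reference point on the $j$-th row (say its point in column $c_{k_1}$) and $\delta_j^{(k)}$ is the coefficient of $E_j$ when the point of $\CCC$ in row $j$ and column $c_k$ is written in terms of $P_i^j$ and $E_j$; dually, the collinearity of column $c_{k_t}$ gives $[Q_{k_t}^{\sigma,1}\,Q_{k_t}^{\sigma,2}\,P^{3+t}]=\varepsilon_{k_t}^{(\sigma^{-1}(k_t))}\,[F'_{k_t}\,F''_{k_t}\,P^{3+t}]$, where $F'_k,F''_k$ are the points of column $c_k$ in rows $1$ and $2$ and $\varepsilon_k^{(j)}$ is the scalar obtained by deleting row $j$ from column $c_k$ (its sign fixed by the lowest-index convention on the $Q$'s). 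The reference brackets $[P_i^j\,E_j\,P^j]$ and $[F'_{k_t}\,F''_{k_t}\,P^{3+t}]$ do not depend on $\sigma$, so pulling them out of the summation and reindexing $\prod_t \varepsilon_{k_t}^{(\sigma^{-1}(k_t))}=\prod_j \varepsilon_{\sigma(j)}^{(j)}$ collapses the sum over $\Sigma$ into a $3\times 3$ determinant:
\[
G^3_4(c_i;P^1,\dots,P^6)=\Big(\prod_{j=1}^{3}[P_i^j\,E_j\,P^j]\Big)\Big(\prod_{t=1}^{3}[F'_{k_t}\,F''_{k_t}\,P^{3+t}]\Big)\cdot\det M,
\qquad M=\big(\delta_j^{(k)}\varepsilon_k^{(j)}\big)_{\substack{j\in\{1,2,3\}\\ k\in\{k_1,k_2,k_3\}}}.
\]
Since the six reference brackets are nonzero whenever the $P^r$ are in general position with respect to the points of $\CCC$, the theorem reduces to the purely configurational identity $\det M=0$.

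To prove $\det M=0$ I would use the defining property of a grid configuration: the rows $r_1,r_2,r_3$ form a pencil (are concurrent) and the columns $c_1,\dots,c_4$ form a pencil. After a projective change of coordinates — which multiplies every summand of $G^3_4$ by the same nonzero scalar and so does not affect its vanishing — I may assume the rows are horizontal lines and the columns are vertical lines, and take the twelve points with their affine representatives. A direct computation in these coordinates then shows that the concurrency of the columns makes $\delta_j^{(k)}$ independent of the row index $j$, while the concurrency of the rows makes $\varepsilon_k^{(j)}$ independent of the column index $k$. Hence $M_{j,k}=\delta^{(k)}\varepsilon^{(j)}$ is a rank-one matrix, so $\det M=0$ and $G^3_4(c_i;P^1,\dots,P^6)=0$; as the argument did not use which column $c_i$ was fixed, it holds for all $c_i$.

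The main obstacle is this last step, which is the analogue of the identity $\beta\beta'\beta''=1$ in the proof of Theorem~\ref{CQS}: one has to read off the structure constants $\delta_j^{(k)}$ and $\varepsilon_k^{(j)}$ from the collinearity relations correctly, with careful bookkeeping of the signs imposed by the lowest-index convention on the $Q_k^{\sigma}$, and then verify that the two pencils of the grid force $M$ to factor through a rank-one matrix. Everything before that is a formal rewriting by multilinearity of the determinant, exactly as in the quadrilateral-set case, and one could equally first reduce to the frame points $R_a$ by multilinearity before carrying it out.
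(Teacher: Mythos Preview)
Your factorisation $G^3_4(c_i;P^1,\dots,P^6)=\big(\prod\text{reference brackets}\big)\cdot\det M$ with $M_{j,k}=\delta_j^{(k)}\varepsilon_k^{(j)}$ is correct, and the problem does reduce to showing $\det M=0$. The gap is in how you argue that $\det M=0$: you take as ``the defining property of a grid configuration'' that the three rows form a pencil and the four columns form a pencil, and then conclude that $M$ has rank one. But concurrency is \emph{not} encoded by the matroid $G^3_4$ --- the matroid only records collinearities among the twelve labelled points, and the pairwise intersections of the row lines (or of the column lines) lie outside that ground set, so nothing forces them to coincide. For instance, with $r_1\colon y=0$, $r_2\colon y=1$, $r_3\colon y=x$ and $c_1\colon x=2$, $c_2\colon x=3$, $c_3\colon x+y=10$, $c_4\colon x+y=20$ (neither family concurrent) one checks that the twelve intersection points realise $G^3_4$; for $i=1$ and your choice of references one computes
\[
M=\begin{pmatrix}2&32&162\\3&35&170\\1&3&8\end{pmatrix},
\]
which has rank two, yet $\det M=0$. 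So the vanishing holds for a reason strictly more subtle than a rank-one factorisation.

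Although the paper's \emph{definition} of a grid mentions parallel pencils, the theorem is applied (in Theorem~\ref{generators grid}) to conclude that the $G^3_4$-polynomials lie in $I_{G^3_4}$, i.e.\ vanish on \emph{every} realisation of the matroid; accordingly the paper's own proof uses only the row and column collinearities. It writes each point as a linear combination along its row or column (e.g.\ $7=a\cdot 1+b\cdot 4$, $8=\gamma\cdot 7+\delta\cdot 9$, \dots) and chases these substitutions until an equation of the form $(1-\text{scalar})\cdot 4\in c_1$ forces the scalar to equal $1$. Your $\det M$ packages exactly this scalar, but establishing its vanishing requires that chase (or something equivalent), not the concurrency hypothesis.
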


\begin{proof}
Due to the multi-linearity of determinants, the statement follows if $G^3_4(c_i; R_{i_1},\dots, R_{i_6}) = 0$, for any $i = 1, \dots 4$ and any $(i_1, \dots, i_6) \in \{1,2,3\}^6$, where we have fixed $\{R_1, R_2, R_3, R_1+R_2+R_3\}$ as the frame of reference on $\PP^2_{\CC}$.
We will now prove the statement for a particular choice of $i$. It can be easily repeated for other possible choices. Let us assume $i = 1$. We want to show that:
\begin{align}
    G^3_4(c_1; R_{i_1},\dots, R_{i_6}) &= [1 \, 4 \, R_{i_1}][2 \, 8 \, R_{i_2}][3 \, 12 \, R_{i_3}][5 \, 6 \, R_{i_4}][7 \, 9 \, R_{i_5}][10 \, 11 \, R_{i_6}] + \label{generator G34}\\
    \hfill & \hspace{10pt} + [1 \, 7 \, R_{i_1}][2 \, 11 \, R_{i_2}][3 \, 6 \, R_{i_3}][4 \, 5 \, R_{i_4}][8 \, 9 \, R_{i_5}][10 \, 12 \, R_{i_6}] + \nonumber \\
    \hfill & \hspace{10pt} + [1 \, 10 \, R_{i_1}][2 \, 5 \, R_{i_2}][3 \, 9 \, R_{i_3}][4 \, 6 \, R_{i_4}][7 \, 8 \, R_{i_5}][11 \, 12 \, R_{i_6}] + \nonumber\\
    \hfill & \hspace{10pt}- [1 \, 4 \, R_{i_1}][2\, 11 \, R_{i_2}][3 \, 9 \, R_{i_3}][5 \, 6 \, R_{i_4}][7 \, 8 \, R_{i_5}][10 \, 12 \, R_{i_6}] + \nonumber\\
    \hfill & \hspace{10pt} - [1 \, 7 \, R_{i_1}][2\, 5 \, R_{i_2}][3 \, 12 \, R_{i_3}][4 \, 6 \, R_{i_4}][8 \, 9 \, R_{i_5}][10 \, 11 \, R_{i_6}] + \nonumber\\
    \hfill & \hspace{10pt} - [1 \, 10 \, R_{i_1}][2 \, 8 \, R_{i_2}][3 \, 6 \, R_{i_3}][4 \, 5 \, R_{i_4}][7 \, 9 \, R_{i_5}][11 \, 12 \, R_{i_6}] \nonumber \\
    \hfill &=0. \nonumber
\end{align} 
We will rewrite all terms of the polynomial as multiples of the first one. So,
we consider the following:
\begin{itemize}
    \item $\{1,4, 1+4\}$ is a basis for the projective line $r_1$. Thus, there exists a unique choice of $a, a', b, b'\in \CC$ such that: ($i$) $7 = a0+b4$ and ($ii$) $10 = a'0+b'4$.
    \item $\{2,8, 2+8\}$ is a basis for the projective line $r_2$. Thus, there exists a unique choice of $c, c', d, d'\in \CC$ such that: ($iii$) $5 = c2 + d8$ and ($iv$) $11 = c'2 + d'8$.
    \item $\{3,12, 3+12\}$ is a basis for the projective line $r_3$. Thus, there exists a unique choice of $e, e', f, f'\in \CC$ such that: ($v$) $6 = e3+f12$ and ($vi$) $9 = e'3+f'12$.
    \item $\{5,6, 5+6\}$ is a basis for the projective line $c_2$. Thus, there exists a unique choice of $\alpha, \beta \in \CC$ such that: ($vii$) $4 = \alpha 5 +\beta 6$.
    \item $\{7,9, 7+9\}$ is a basis for the projective line $c_3$. Thus, there exists a unique choice of $\gamma, \delta \in \CC$ such that: ($viii$) $8 = \gamma 7 +\delta 9$.
    \item $\{10,11, 10+11\}$ is a basis for the projective line $c_4$. Thus, there exists a unique choice of $\varepsilon, \zeta \in \CC$ such that: ($ix$) $12 = \varepsilon 10 +\zeta 11$.
\end{itemize}  Here, it is important to remark that $0, \dots, 12$ are fixed representatives of the corresponding points in $\PP^2_{\CC}$, which makes the choice of the coefficients in ($i$) - ($ix$) unique. Now, we exploit ($i$) - ($ix$) to modify the columns of the matrices showing up in the second, third, fourth, fifth and sixth summands of $G^3_4(c_1; R_{i_1},\dots, R_{i_6})$. By multi-linearity, we obtain:
\begin{align*}
    G^3_4(c_1; R_{i_1},\dots, R_{i_6}) &= \textcolor{red}{1} \cdot [1 \, 4 \, R_{i_1}][2 \, 8 \, R_{i_2}][3 \, 12 \, R_{i_3}][5 \, 6 \, R_{i_4}][7 \, 9 \, R_{i_5}][10 \, 11 \, R_{i_6}] + \\
    \hfill & \hspace{10pt} - \textcolor{orange}{b d'f\beta\gamma\zeta} [1 \, 4 \, R_{i_1}][2 \, 8 \, R_{i_2}][3 \, 12 \, R_{i_3}][5 \, 6 \, R_{i_4}][7 \, 9 \, R_{i_5}][10 \, 11 \, R_{i_6}] + \\
    \hfill & \hspace{10pt} - \textcolor{yellow}{b'df'\alpha\delta\varepsilon} [1 \, 4 \, R_{i_1}][2 \, 8 \, R_{i_2}][3 \, 12 \, R_{i_3}][5 \, 6 \, R_{i_4}][7 \, 9 \, R_{i_5}][10 \, 11 \, R_{i_6}] + \\
    \hfill & \hspace{10pt}- \textcolor{green}{d'f'\delta\zeta} [1 \, 4 \, R_{i_1}][2 \, 8 \, R_{i_2}][3 \, 12 \, R_{i_3}][5 \, 6 \, R_{i_4}][7 \, 9 \, R_{i_5}][10 \, 11 \, R_{i_6}]+\\
    \hfill & \hspace{10pt} - \textcolor{blue}{bd\alpha\gamma} [1 \, 4 \, R_{i_1}][2 \, 8 \, R_{i_2}][3 \, 12 \, R_{i_3}][5 \, 6 \, R_{i_4}][7 \, 9 \, R_{i_5}][10 \, 11 \, R_{i_6}] +\\
    \hfill & \hspace{10pt} - 
    \textcolor{purple}{b'f\beta\varepsilon} [1 \, 4 \, R_{i_1}][2 \, 8 \, R_{i_2}][3 \, 12 \, R_{i_3}][5 \, 6 \, R_{i_4}][7 \, 9 \, R_{i_5}][10 \, 11 \, R_{i_6}]\\
    \hfill &= (\textcolor{red}{1} -  \textcolor{orange}{b d'f\beta\gamma\zeta} - \textcolor{yellow}{b'df'\alpha\delta\varepsilon} - \textcolor{green}{d'f'\delta\zeta} - \textcolor{blue}{bd\alpha\gamma} - \textcolor{purple}{b'f\beta\varepsilon}) \cdot \\\hfill & \hspace{120pt} \cdot [1 \, 4 \, R_{i_1}][2 \, 8 \, R_{i_2}][3 \, 12 \, R_{i_3}][5 \, 6 \, R_{i_4}][7 \, 9 \, R_{i_5}][10 \, 11 \, R_{i_6}]
\end{align*}
As a consequence, the claim is equivalent to  $\textcolor{orange}{b d'f\beta\gamma\zeta} + \textcolor{yellow}{b'df'\alpha\delta\varepsilon} + \textcolor{green}{d'f'\delta\zeta} + \textcolor{blue}{bd\alpha\gamma} + \textcolor{purple}{b'f\beta\varepsilon} = \textcolor{red}{1}$. 
However,
\begin{align*}
4 &\stackrel{\footnotesize{(vii)}}{=} \underline{\alpha 5} + \underline{\beta 6}\\
\hfill &\stackrel{\footnotesize{(iii,v)}}{=} c\alpha 2  + e \beta 3 + \underline{d\alpha 8} + \underline{f \beta 12}\\
\hfill &\stackrel{\footnotesize{(viii,ix)}}{=} c\alpha 2  + e \beta 3 + \underline{d\alpha\gamma 7} + \underline{d\alpha\delta 9} + \underline{e f \beta 10} +  \underline{ f \beta \zeta 11} \\
\hfill &\stackrel{\footnotesize{(i,ii,iv,vi)}}{=} \ast_1 1 + \ast_2 2  + \ast_3 3 + (\textcolor{blue}{bd\alpha\gamma} + \textcolor{purple}{b'f\beta\varepsilon}) 4  + \underline{d'f\beta\zeta 8}+ \underline{df'\alpha\delta 12}\\
\hfill &\stackrel{\footnotesize{(viii,ix)}}{=} \ast_1 1 + \ast_2 2  + \ast_3 3 + (\textcolor{blue}{bd\alpha\gamma} + \textcolor{purple}{b'f\beta\varepsilon}) 4 + \underline{d'f\beta\gamma\zeta 7} + \underline{d'f\beta\delta\zeta 9} + \underline{df'\alpha\delta\varepsilon 10} +  \underline{df'\alpha\delta\zeta 11} \\
\hfill &\stackrel{\footnotesize{(i,ii,iv,vi)}}{=} \ast_1 1 + \ast_2 2  + \ast_3 3 + (\textcolor{orange}{b d'f\beta\gamma\zeta} + \textcolor{yellow}{b'df'\alpha\delta\varepsilon} + \textcolor{blue}{bd\alpha\gamma} + \textcolor{purple}{b'f\beta\varepsilon} )  4 + \underline{dd'f'\alpha\delta\zeta 8} + \underline{d'ff'\beta\gamma\zeta 12} \\
\hfill &\stackrel{\footnotesize{(iii,v)}}{=} \ast_1 1 + \ast_2 2  + \ast_3 3 + (\textcolor{orange}{b d'f\beta\gamma\zeta} + \textcolor{yellow}{b'df'\alpha\delta\varepsilon} + \textcolor{blue}{bd\alpha\gamma} + \textcolor{purple}{b'f\beta\varepsilon} )  4 + \underline{d'f'\alpha\delta\zeta 5 + d'f'\beta\delta\zeta 6}\\
\hfill &\stackrel{\footnotesize{(viii)}}{=} \ast_1 1 + \ast_2 2  + \ast_3 3 + (\textcolor{orange}{b d'f\beta\gamma\zeta} + \textcolor{yellow}{b'df'\alpha\delta\varepsilon} +  \textcolor{green}{d'f'\delta\zeta} + \textcolor{blue}{bd\alpha\gamma} + \textcolor{purple}{b'f\beta\varepsilon} )  4,
\end{align*}
which implies that:
\[(\textcolor{red}{1} -  \textcolor{orange}{b d'f\beta\gamma\zeta} - \textcolor{yellow}{b'df'\alpha\delta\varepsilon} - \textcolor{green}{d'f'\delta\zeta} - \textcolor{blue}{bd\alpha\gamma} - \textcolor{purple}{b'f\beta\varepsilon})4  = \ast_1 1 + \ast_2 2  + \ast_3 3.\]
Here, on the l.h.s., there is another representation of point $6$; conversely, on the r.h.s., there is a point in the line $c_1$. Since by construction $4 \notin c_1$, the equality above may hold if and only if both sides give $\left(\begin{smallmatrix} 0 \\ 0 \\ 0 \end{smallmatrix}\right)$, that is if and only if $\textcolor{orange}{b d'f\beta\gamma\zeta} + \textcolor{yellow}{b'df'\alpha\delta\varepsilon} + \textcolor{green}{d'f'\delta\zeta} + \textcolor{blue}{bd\alpha\gamma} + \textcolor{purple}{b'f\beta\varepsilon} = \textcolor{red}{1}$. The thesis follows.
\end{proof}

Furthermore, projections keep track of the vanishing of these polynomials.

\begin{lemma}\label{projective invariant property 2}
The vanishing of the polynomial $G^3_4 (c_r; R_{i_1}, \ldots,R_{i_6})$~is~a~projective~invariant~property. 
\end{lemma}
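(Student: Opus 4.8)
The plan is to mimic exactly the proof of Lemma~\ref{projective invariant property}, which established the analogous statement for the quadrilateral-set polynomials $QS(\ell_{123}; R_i, R_j, R_k)$. The key observation is that $G^3_4(c_r; R_{i_1}, \ldots, R_{i_6})$ is, by its very definition, a signed sum over $\sigma \in \Sigma$ of products of \emph{six} $3\times 3$ brackets, each bracket being a determinant whose three columns are coordinates of points in $\PP^2_\CC$. A projective transformation acts on these coordinates by a matrix $T \in GL(3,\CC)$, and — since we work with fixed representatives, a change of reference also allows a rescaling of each column — a diagonal matrix $D \in \diag(3,\CC)$.

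First I would fix, without loss of generality, the column $c_r = c_1$, as in Theorem~\ref{CG34}, noting that the argument is identical for $r = 2,3,4$. Then I would write out $G^3_4(Tc_1D; TR_{i_1}D, \ldots, TR_{i_6}D)$, replacing every bracket $[PQR]$ appearing in \eqref{generator G34} by $[TPD \; TQD \; TRD]$. The central computation is the multiplicativity of the determinant: each bracket $[TPD\;TQD\;TRD]$ equals $(\det T)(\det D)\,[PQR]$, where $\det D$ is the product of the three scalars rescaling the relevant columns. Since each of the six brackets in a single summand contributes one factor of $\det T$, and collectively the columns of the six brackets run over all twelve points of the configuration plus the six reference points (with the reference-point columns rescaled consistently across all summands), every summand of $G^3_4(Tc_1D; \ldots)$ is multiplied by the \emph{same} nonzero scalar $(\det T)^6 (\det D)^6$ — more precisely, a fixed nonzero constant depending only on $T$ and $D$ but not on $\sigma$. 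Crucially, this constant factors out of the entire signed sum, so
\[
G^3_4(Tc_1D; TR_{i_1}D, \ldots, TR_{i_6}D) = (\det T)^6 (\det D)^6 \, G^3_4(c_1; R_{i_1}, \ldots, R_{i_6}).
\]
Since $(\det T)^6 (\det D)^6 \neq 0$, the left-hand side vanishes if and only if the right-hand side does, which is the claim.

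The only point requiring a moment's care — and the one I expect to be the mildest obstacle — is checking that the scalar factor is genuinely independent of $\sigma$, i.e.\ that the same total power of $\det T$ and the same product of diagonal entries of $D$ appears in each of the six summands. This holds because each summand is a product of six brackets each containing exactly three columns: the total number of bracket-columns is $18$ per summand regardless of $\sigma$, twelve of them being the (fixed) configuration points $1, \ldots, 12$ each appearing exactly once, and six being the reference points $R_{i_1}, \ldots, R_{i_6}$, again the same in every summand. Hence the rescaling contribution of $D$ is the product of the same twelve-plus-six diagonal entries in every term, and the $T$-contribution is $(\det T)^6$ in every term. Therefore the factor pulls out of the sum. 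As in Lemma~\ref{projective invariant property}, the same argument applies verbatim to $c_2, c_3, c_4$, completing the proof.
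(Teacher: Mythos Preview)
Your proof is correct and follows essentially the same approach as the paper: fix $c_r=c_1$, replace each bracket $[PQR]$ by $[TPD\;TQD\;TRD]$, use multiplicativity of the determinant to pull out a common nonzero scalar, and conclude that vanishing is preserved. In fact your version is slightly more careful than the paper's own proof: you correctly obtain the exponent $6$ (the paper writes $\det T^3 \det D^3$, presumably a copy-paste from the three-bracket $QS$ case), and you explicitly justify why the scalar factor is independent of $\sigma$ --- namely, that each of the six summands involves exactly the same eighteen columns (the twelve configuration points and the six reference points), so the same product of rescalings appears in every term.
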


\begin{proof}
    We prove the lemma for the column $c_1$. We need to show that
    \[ G^3_4(c_1;P^1,\dots,P^6) = \sum _{\sigma \in \Sigma}\sgn (\sigma)[1 P_{\sigma}^1P^1][2 P_{\sigma}^2P^2][3 P_{\sigma}^3P^3][Q_{k_1}^{\sigma,1} Q_{k_1}^{\sigma,2}P^4][Q_{k_2}^{\sigma,1} Q_{k_2}^{\sigma,2}P^5][Q_{k_3}^{\sigma,1} Q_{k_3}^{\sigma,2}P^6] =0 \]
is a projective invariant property. We consider $T \in GL(\CC,3)$ and $D \in \diag(\CC,3)$ and we write down:
\begin{align*}
G^3_4(Tc_1D;TP^1D,\dots,TP^6D) &= \sum _{\sigma \in \Sigma}\sgn (\sigma)[T1D \;  TP_{\sigma}^1D \; TP^1D][T2D \; TP_{\sigma}^2D\; TP^2D] \cdot \\ 
\hfill & \hspace{57 pt} \cdot [T3D\;  TP_{\sigma}^3D \; TP^3D][TQ_{k_1}^{\sigma,1}D \; TQ_{k_1}^{\sigma,2}D \; TP^4D] \cdot \\ 
\hfill & \hspace{57 pt} \cdot[TQ_{k_2}^{\sigma,1}D \;  TQ_{k_2}^{\sigma,2}D \; TP^5D][TQ_{k_3}^{\sigma,1}D \; TQ_{k_3}^{\sigma,2}D \; TP^6D]\\
\hfill &= \det T^3 \det D^3 \bigg (\sum _{\sigma \in \Sigma}\sgn (\sigma)[1 P_{\sigma}^1P^1][2 P_{\sigma}^2P^2][3 P_{\sigma}^3P^3] \cdot \\
\hfill & \hspace{75pt} \cdot [Q_{k_1}^{\sigma,1} Q_{k_1}^{\sigma,2}P^4][Q_{k_2}^{\sigma,1} Q_{k_2}^{\sigma,2}P^5][Q_{k_3}^{\sigma,1} Q_{k_3}^{\sigma,2}P^6] \bigg ).
\end{align*}
\vspace{-1mm}
We observe that: 
\[G^3_4(Tc_1D;TP^1D,\dots,TP^6D) =0 \quad \Longleftrightarrow  \quad G^3_4(c_1;P^1,\dots,P^6) =0 ,\] 
which completes the proof for $c_1$. The analogous proof works for other choices of $c_i$. 
\end{proof}

The family of $G^3_4$ polynomials introduced in Theorem~\ref{CG34} characterises the liftability of 12-tuples of collinear points to a  3 $\times$ 4 grid. Specifically, the following holds.

\begin{theorem}\label{TFAE 3x4 grid}
    Let $r$ be a line in $\PP^2_{\CC}$ and $1, \dots, 12$  distinct points of $r$. Consider the collection $\LL=\{c_1 = \{1, 2, 3\}, c_2 = \{4, 5, 6\}, c_3 = \{7 , 8 , 9\}, c_4 = \{10, 11, 12\}, r_1 = \{1, 4, 7, 10\}, r_2 = \{2, 5, 8, 11\}, r_3 = \{3, 6 , 9 , 12\}\}$, where each of the tuples is the combinatorial line consisting of the points thereby contained. Then, the following statements are equivalent:
    \begin{itemize}
        \item[$i$.] The points $1, \dots, 12$ are projective image of a $3 \times 4$ grid.
        \item[$ii$.] The polynomials $G^3_4(c_i; P^1, \dots, P^6) = 0$ vanish for any $c_i \in \LL$ and any $P^1, \dots ,P^6 \in \PP^2_{\CC}$.
        \item[$iii$.] The polynomials $G^3_4(c_i; R_i, \dots, R_n)$ vanish for any $c_i \in \LL$ and any $(i,j,k,l,m,n) \in \{1,2,3\}^6$.
        \item[$iv$.] The polynomials $G^3_4(c_1; R_i, \dots , R_n)$ vanish for any $(i,j,k,l,m,n) \in \{1,2,3\}^6$.
        \item[$v$.] The polynomials $G^3_4(c_1;  R_i, \dots , R_n)$ vanish for any $(i,j,k,l,m,n) \in \{1,2,3\}^6$ with $i \leq j \leq k \leq l \leq m \leq n$.
    \end{itemize}
\end{theorem}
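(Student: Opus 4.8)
The plan is to mirror, step by step, the proof of Theorem~\ref{TFAE quad set}, replacing the quadrilateral-set data by those of the $3\times 4$ grid. As there, the chain $(ii)\Rightarrow(iii)\Rightarrow(iv)\Rightarrow(v)$ is immediate, since each statement specialises the previous one (to reference points, then to the single column $c_1$, then to non-decreasing index tuples). It remains to close the cycle through $(v)\Rightarrow(iv)\Rightarrow(iii)\Rightarrow(ii)\Rightarrow(i)$, together with $(i)\Rightarrow(ii)$.

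For $(i)\Rightarrow(ii)$ I would use Lemma~\ref{projective invariant property 2}: the vanishing of $G^3_4(c_i;P^1,\dots,P^6)$ is a projective invariant, and by Theorem~\ref{CG34} it holds on every honest $3\times 4$ grid, hence whenever the $12$ points are a projective image of one. For $(ii)\Rightarrow(i)$ I would follow the collinearity-matrix argument of the quadrilateral case: verify computationally that, for $12$ collinear points, the simultaneous vanishing of all the $G^3_4$-polynomials is equivalent to the collinearity matrix $\Lambda_{G^3_4}$ (a $16\times 12$ matrix, of generic rank $10$) having $\rk\Lambda_{G^3_4}\le 9$. By Lemma~\ref{dimesion space of lifts} the space of lifts then has dimension $\ge 3$, so a non-trivial lift exists; since the underlying matroid $G^3_4$ is maximal (Example~\ref{example:non-maximal}), any non-trivial lift is a non-degenerate realisation of the grid, which by Construction~\ref{collinearity matrix} projects onto the given $12$ points.

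The remaining implications are ideal identities in $\CC[x_1,\dots,z_{12}]$, each handled as in Theorem~\ref{TFAE quad set}. For $(iii)\Rightarrow(ii)$, multilinearity of the $3\times 3$ brackets gives that, modulo the $16$ collinearity generators, the polynomials $G^3_4(c_i;P^1,\dots,P^6)$ for arbitrary $P^j$ lie in the ideal generated by the $G^3_4(c_i;R_{i_1},\dots,R_{i_6})$. For $(iv)\Rightarrow(iii)$, I would exhibit, for $i=2,3,4$, a projective transformation realising the column permutation $(c_1\,c_i)(c_j\,c_k)$: the Klein four-group inside $S_4$ preserves the cross-ratio of the four column-lines, so its elements are realisable by projective transformations that fix every row-line, permute the $12$ points, and map the grid to itself with $c_1\mapsto c_i$; Lemma~\ref{projective invariant property 2} then shows the $G^3_4(c_1;\cdot)$ plus the collinearity generators generate the $G^3_4(c_i;\cdot)$. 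For $(v)\Rightarrow(iv)$, I would observe that the multiset $\{i_1,\dots,i_6\}$ determines the letter multidegree of $G^3_4(c_1;R_{i_1},\dots,R_{i_6})$ bijectively (an $R_1$-, $R_2$-, $R_3$-slot contributes $(0,1,1)$, $(1,0,1)$, $(1,1,0)$ respectively, so $(a,b,c)\mapsto(b+c,a+c,a+b)$), whence the $28$ polynomials in $(v)$ are exactly one per letter multidegree; the remaining $3^6-28$ ordered tuples are then absorbed by explicit combinations of the $16$ collinearity generators, in direct analogy with the reduction recorded for the quadrilateral set.

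The main obstacle is the computational content concentrated in $(ii)\Rightarrow(i)$ and in the reductions $(v)\Rightarrow(iv)$ and $(iv)\Rightarrow(iii)$. In contrast with the quadrilateral set, where $\Lambda_{QS}$ is $4\times 6$ and the relevant polynomials are ten degree-$6$ forms, here $\Lambda_{G^3_4}$ is $16\times 12$ and the $G^3_4$-polynomials have degree $12$ in $36$ variables with hundreds of terms, and naive Gr\"obner-basis computations for the matroid ideal do not terminate (Example~\ref{exa:3x4grid}); consequently the equivalence ``all $G^3_4$-polynomials vanish $\iff\rk\Lambda_{G^3_4}\le 9$'' and the rewriting of the $3^6$ polynomials of $(iv)$ in terms of the $28$ of $(v)$ must be organised multidegree by multidegree to be feasible, and this is where essentially all the work lies. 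A secondary point worth checking is that the Klein-four column automorphisms used in $(iv)\Rightarrow(iii)$ are realisable by projective transformations of an arbitrary grid, and not merely of special ones.
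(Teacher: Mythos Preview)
Your plan matches the paper's overall structure: the trivial chain $(ii)\Rightarrow\cdots\Rightarrow(v)$, then $(i)\Leftrightarrow(ii)$ via Theorem~\ref{CG34}, Lemma~\ref{projective invariant property 2}, and the rank of $\Lambda_{G^3_4}$, with the remaining implications closing the cycle. Two points deserve comment.

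For $(ii)\Rightarrow(i)$ the paper does more than hand the rank condition to a machine. It proves a structural Claim: partitioning the sixteen rows of $\Lambda_{G^3_4}$ into the four ``column'' rows $\mathscr C$ and three rank-two blocks $\mathscr R_1,\mathscr R_2,\mathscr R_3$ coming from the grid rows, one sees that any nonvanishing $10$-minor must use all of $\mathscr C$ and exactly two rows from each $\mathscr R_i$; up to relabelling this leaves only three non-zero patterns and about twenty concrete minors, which are then checked to lie in the ideal generated by the $G^3_4$-polynomials. This reduction is what makes the verification tractable and is absent from your sketch.

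The real gap is your $(iv)\Rightarrow(iii)$. The Klein-four argument does not work. The four column-lines of a generic realisation of $G^3_4$ are \emph{not} concurrent, so there is no pencil cross-ratio to invoke; and even if one reads your remark row by row, a projective transformation of $\PP^2$ fixing three non-concurrent row-lines is diagonal in the triangle basis and hence restricts on each row to a scaling fixing the two triangle vertices on that row---a one-parameter family that cannot generically realise the prescribed double transposition of the four marked points. Equally, the purely combinatorial relabelling $\sigma$ only shows that the ideal in $(iv)$ is \emph{isomorphic} to the corresponding ideal for $c_i$, not that one contains the other, so it does not give the required vanishing either. You flag this as ``a secondary point'', but it is a genuine obstruction. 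The paper avoids it entirely: it does not attempt a conceptual $(iv)\Rightarrow(iii)$ but collapses $(v)\Rightarrow(iii)$ into a single computer verification, reducing each $G^3_4(c_i;R_{i_1},\dots,R_{i_6})$ modulo the sixteen collinearity minors and the twenty-eight polynomials in $(v)$. Your $(v)\Rightarrow(iv)$ via letter multidegree is plausible, but in the paper it too is subsumed in that computation rather than carried out by hand.
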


\begin{proof}
    It is immediate to see that $(ii) \Longrightarrow (iii) \Longrightarrow (iv) \Longrightarrow (v)$.\\

    \textbf{$(i) \Longrightarrow (ii)$} By Lemma~\ref{projective invariant property 2}, the vanishing of the $G^3_4$ polynomials is a projective invariant property, and so, if twelve collinear points are the projective image of a $3 \times 4$ grid, then $G^3_4(c_i; P^1, \dots, P^6) = 0$ where $c_i, P^1, \dots, P^6$ satisfy the assumptions of Theorem~\ref{CG34}.

    \smallskip

    \textbf{$(ii) \Longrightarrow (i)$} We now consider twelve collinear points $1, \dots, 12$ which satisfy the vanishing of the polynomials $G^3_4(c_i; P^1, \dots, P^6)$, for any choice of $c_i \in \LL$, and any choice of $P^1, \dots, P^6 \in \PP^2_{\CC}$. Let $P$ be a point which does not lie on the line of points $1, \dots, 12$.
    
    \begin{claim}
        The condition that the polynomials $G^3_4(c_i; P^1, \dots, P^6)$ vanish for any choice of $c_i \in \LL$ and any choice of $P^1, \dots, P^6 \in \PP^2_{\CC}$ implies that the collinearity matrix $\Lambda_{G^3_4}$ has rank $\leq 9$.
        
        $$ \Lambda_{G^3_4} = \left( \begin{smallmatrix}
            [2\,3] & -[1\,3] & [1\,2] & 0 & 0 & 0 & 0 & 0 & 0 & 0 & 0 & 0 \\
            0 & 0 & 0 & [5\,6] & -[4\,6] & [4\,5] & 0 & 0 & 0 & 0 & 0 & 0 \\
            0 & 0 & 0 & 0 & 0 & 0 & [8\,9] & -[7\,9] & [7\,8] & 0 & 0 & 0 \\
            0 & 0 & 0 & 0 & 0 & 0 & 0 & 0 & 0 & [11\,12] & -[10\,12] & [10\,11]\\
            [4\,7] & 0 & 0 & -[1\,7] & 0 & 0 & [1\,4] & 0 & 0 & 0 & 0 & 0 \\
            [4\,10] & 0 & 0 & -[1\,10] & 0 & 0 & 0 & 0 & 0 &[1\,4] & 0 & 0 \\
            [7\,10] & 0 & 0 & 0 & 0 & 0 & -[1 \, 10] & 0 & 0 & [1\,7] & 0 & 0 \\
            0 & 0 & 0 & [7 \, 10] & 0 & 0 & -[4 \, 10] & 0 & 0 & [4\,7] & 0 & 0\\
            0 & [5\,8] & 0 & 0 & -[2\,8] & 0 & 0 & [2\,5] & 0 & 0 & 0 & 0  \\
            0 & [5\,11] & 0 & 0 & -[2\,11] & 0 & 0 & 0 & 0 & 0 & [2\,5] & 0  \\
            0 & [8\,11] & 0 & 0 & 0 & 0 & 0 & -[2 \, 11] & 0 & 0 & [2\,8] & 0 \\
            0 & 0 & 0 & 0 & [8\, 11] & 0 & 0 & -[5 \, 11] & 0 & 0 & [5\,8] & 0 \\
            0 & 0 & [6\,9] & 0 & 0 & -[3\,9] & 0 & 0 & [3\,6] & 0 & 0 & 0  \\
            0 & 0 & [6\,12] & 0 & 0 & -[3\,12] & 0 & 0 & 0 & 0 & 0 & [3\,6] \\
            0 & 0 & [9\,12] & 0 & 0 & 0 & 0 & 0 & -[3 \, 12] & 0 & 0 & [3\,9] \\
            0& 0 & 0 & 0 & 0 & [9\, 12] & 0 & 0 & -[6 \, 12] & 0 & 0 & [6\,9] 
        \end{smallmatrix}\right)$$
    \end{claim}

\begin{proof}[\bf Proof of the claim.]
We aim to prove that all the 10-minors of the matrix $\Lambda_{G^3_4}$ vanish when evaluated at the coordinates of points $1$ through $12$. In considering the 10-minors of the matrix $\Lambda_{G^3_4}$, we note that each line contains the variables of three collinear points. Specifically, we can introduce the following partition of the rows of the matrix $\Lambda_{G^3_4}$:
    \begin{itemize}
        \item Rows $\mathscr{C} = \{I, II, III, IV\}$ correspond to the columns of the grid.
        \item Rows $\mathscr{R}_1 = \{V, VI, VII, VIII\}$ correspond to the first row of the grid.
        \item Rows $\mathscr{R}_2 = \{IX, X, XI, XII\}$ correspond to the second row of the grid.
        \item Rows $\mathscr{R}_3 = \{XIII, XIV, XV, XVI\}$ correspond to the third row of the grid.
    \end{itemize}        
        The submatrices of $\Lambda_{G^3_4}$ formed by the rows in one of the $\mathscr{R}i$'s and the corresponding non-zero columns have rank 2. Thus, whenever a $10 \times 10$ submatrix of $\Lambda{G^3_4}$ contains three rows of one of the $\mathscr{R}_i$'s, the corresponding minor is automatically 0. Consequently, the only minors that can possibly be non-zero are those generated by a selection of rows including the four rows from $\mathscr{C}$ and two rows from each $\mathscr{R}_i$. Furthermore, the choice of two rows in each of the $\mathscr{R}_i$'s does not affect the final computation of a 10-minor. This is because the determinant of a matrix remains unchanged if one replaces two rows with linear combinations of them.
        
        To sum up, after having chosen a 10 $\times$ 10 submatrix of $\Lambda_{G^3_4}$ there are only three possible patterns for its non-zero entries, up to switching its columns and rows.
        $$ \left( \begin{smallmatrix}
            \circ & & & & \circ & & & & \circ & \\
            & \circ & & & & \circ & & & & \circ \\
            & & \circ & & & & \circ & & & \\
            & & & \circ & & & & \circ & & \\
            \circ & \circ & \circ & \circ & & & & & & \\
             \circ & \circ & \circ & \circ & & & & & & \\
            & & & & \circ & \circ & \circ & \circ & & \\
            & & & & \circ & \circ & \circ & \circ & & \\
            & & & & & & & & \circ & \circ \\
            & & & & & & & & \circ & \circ
        \end{smallmatrix}\right) \qquad 
         \left( \begin{smallmatrix}
            \circ & & & & \circ & & & \circ & &\\
            & \circ & & & & \circ & & & \circ &\\
            & & \circ & & & & \circ & & & \circ\\
            & & & \circ & & & & & & \\
            \circ & \circ & \circ & \circ & & & & & & \\
             \circ & \circ & \circ & \circ & & & & & & \\
            & & & & \circ & \circ & \circ & & & \\
            & & & & \circ& \circ & \circ & & & \\
            & & & & & & & \circ & \circ & \circ \\
            & & & & & & & \circ & \circ & \circ
        \end{smallmatrix}\right) \qquad
         \left( \begin{smallmatrix}
            \circ & & & & \circ & & & \circ & &\\
            & \circ & & & & \circ & & & \circ &\\
            & & \circ & & & & \circ & & &\\
            & & & \circ & & & & & & \circ\\
            \circ & \circ & \circ & \circ & & & & & & \\
             \circ & \circ & \circ & \circ & & & & & & \\
            & & & & \circ & \circ & \circ & & & \\
            & & & & \circ& \circ & \circ & & & \\
            & & & & & & & \circ & \circ & \circ \\
            & & & & & & & \circ & \circ & \circ
        \end{smallmatrix}\right) 
        $$
In each of the matrices among the patterns above, there are three clear blocks of columns. Each of these blocks corresponds to one of the rows of the grid. Since the rows of the grid play a symmetric role, up to relabelling the points, we can associate the leftmost block with $r_1$, the central block with $r_2$, and the rightmost block with $r_3$. This reduces the study cases to: 6 minors having the first pattern, 4 minors having the second pattern, and 12 minors having the third pattern.

        These 10-minors are all contained in the ideal generated by the polynomials $G^3_4(c_i; (0,0,1)^t)$, as shown in the computations available on \href{https://github.com/ollieclarke8787/PointAndLineConfigurations}{GitHub}.
    \end{proof}
    It follows from the claim that the linear system $\Lambda_{G^3_4} (z_1 \dots z_{12})^t = (0 \dots 0)^t$ has a solution space of dimension at least 3. In other words, it is possible to choose $z_1, \dots, z_{12}$ such that
    $$\begin{cases}
    \scriptstyle{[2\,3] \cdot z_1 -[1\,3] \cdot z_2 + [1\,2] \cdot z_3 = [5\,6] \cdot z_4 -[4\,6] \cdot z_5 + [4\,5] \cdot z_6 = [8\,9] \cdot z_7 -[7\,9] \cdot z_8 + [7\,8] \cdot z_9 = [11\,12] \cdot z_{10} - [10\,12] \cdot z_{11} + [10\,11] \cdot z_{12} = 0}\\
    
    \scriptstyle{[4\,7] \cdot z_1 - [1\,7] \cdot z_4 + [1\,4] \cdot z_7 = [4\,10] \cdot z_1 -[1\,10] \cdot z_4 + [1\,4] \cdot z_{10} = [7\,10] \cdot z_1  -[1 \, 10] \cdot z_7 + [1\,7] \cdot z_{10} = [7 \, 10] \cdot z_4 -[4 \, 10] \cdot z_7 + [4\,7]\cdot z_{10} = 0} \\

    \scriptstyle{[5\,8] \cdot z_2 -[2\,8] \cdot z_5 + [2\,5] \cdot z_8 = [5\,11] \cdot z_2 -[2\,11] \cdot z_5 + [2\,5] \cdot z_{11} = [8\,11] \cdot z_2 -[2 \, 11] \cdot z_8 + [2\,8] \cdot z_{11} = [8\, 11] \cdot z_5 -[5 \, 11] \cdot z_8 + [5\,8] \cdot z_{11} = 0 }\\
    
    \scriptstyle{[6\,9] \cdot z_3 - [3\,9] \cdot z_6  + [3\,6] \cdot z_9 = [6\,12] \cdot z_3 -[3\,12] \cdot z_6 + [3\,6] \cdot z_{12} = [9\,12] \cdot z_3 -[3 \, 12] \cdot z_9 + [3\,9] \cdot z_{12} = [9\, 12] \cdot z_6 -[6 \, 12] \cdot z_9 + [6\,9] \cdot z_{12} = 0}
    \end{cases}$$
        and such that the points $\left(\begin{smallmatrix} x_i \\ 1 \\ z_i \end{smallmatrix} \right)$ for $i = 1, \dots, 12$ span the whole projective plane. This ensures the existence of a non-degenerate $3 \times 4$ grid whose image via the projection through $P$ on the line $r$ consists exactly of points $1, \dots, 12$.

        \smallskip

    \textbf{$(iii) \Longrightarrow (ii)$} The implication follows because the polynomials in ($ii$) and ($iii$) generate the same ideal, by the multilinearity of determinants.

       \smallskip

    \textbf{$(v) \Longrightarrow (iii)$} The implication follows from direct computation. For each polynomial $g = G^3_4(c_i; R_i, \dots, R_n)$, we verify that $g$ belongs to the ideal generated by $3$-minors arising from the collinearity constraints, and the polynomials in $(v)$. The code is available on \href{https://github.com/ollieclarke8787/PointAndLineConfigurations}{GitHub}.
    \end{proof}

    This characterisation is crucial to provide a minimal generating set for the ideal $I_{G^3_4}$.
\begin{theorem}\label{generators grid}
    Let $G^3_4$ be the simple matroid underlying the grid configuration with $3$ rows and $4$ columns, and let $I_{G^3_4}$ denote the ideal of the matroid variety. Then,
    \begin{multline*}I_{G^3_4} = \langle [1 \, 2 \, 3], \dots, [10 \, 11 \, 12], G^3_4(c_{1}; R_i, R_j, R_k, R_l, R_m,R_n) \, \forall \, i \leq j\leq k \leq l \leq m \leq n, \\ \hbox{with } i,j,k,l,m,n \in \{1,2,3\} \rangle,
    \end{multline*}
    where $[1 \, 2 \, 3], \dots, [10\, 11\, 12]$ are the collinearities given by the grid.
\end{theorem}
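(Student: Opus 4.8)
The plan is to mirror the argument used for the quadrilateral set in Theorem~\ref{Generators}, replacing the role of Theorem~\ref{TFAE quad set} by Theorem~\ref{TFAE 3x4 grid}. Write $I$ for the ideal on the right-hand side. The inclusion $I \subseteq I_{G^3_4}$ is immediate from Theorem~\ref{CG34}, since every generator of $I$ vanishes on any realisation of $G^3_4$. For the reverse inclusion, I would first invoke the numerical computation on GitHub showing that the natural GRevLex term order gives a square-free initial ideal for $I$; hence $I$ is radical and it suffices to prove $V(I) \subseteq V(I_{G^3_4})$.

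So let $A \in \CC^{36}$ be a point of $V(I)$, viewed as a $3 \times 12$ matrix whose columns are coordinates of $12$ points in $\PP^2_{\CC}$. Since $A$ satisfies the determinantal collinearity conditions $[1\,2\,3]_X = \dots = [10\,11\,12]_X = 0$, it lies in the combinatorial closure $V_{G^3_4}^{\comb} = V_{\C(G^3_4)}$. The matroid $G^3_4$ is quasi-liftable (Figure~\ref{fig:3x4g}), has maximal underlying simple matroid (Example~\ref{example:non-maximal}), and every point lies on exactly two lines, so Corollary~\ref{decomposition quasi-liftable} gives $V_{\C(G^3_4)} = V_0 \cup V_{G^3_4}$, with $V_0$ the variety of a line with $12$ marked points. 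If $A \in V_{G^3_4}$ we are done. Otherwise $A \in V_0$, so the twelve points of $A$ are collinear (allowing some to be loops, i.e. zero columns). Since $A \in V(I)$, its coordinates also satisfy all the polynomials $G^3_4(c_1; R_i,\dots,R_n)$; by the equivalence $(v) \Longleftrightarrow (i)$ in Theorem~\ref{TFAE 3x4 grid}, the twelve collinear points are the projective image of a $3\times 4$ grid, hence (via Construction~\ref{collinearity matrix} and a small lifting, exactly as in Lemma~\ref{perturbation}) lie in the Euclidean closure of $\Gamma_{G^3_4}$. The case where some columns are loops is handled by the ghost-line argument used for Figure~\ref{fig:quad-set lifting}: it suffices to treat a line with a small number of loops, and the missing points are recovered as intersections of ghost lines of the rows/columns not meeting the loop. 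Thus $A \in \overline{\Gamma_{G^3_4}} = V(I_{G^3_4})$.

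It remains to prove minimality of the generating set. The four-plus-three collinearity generators of degree $3$ are polynomially independent over $R$ by reasons of degree and point multi-degree. For the degree-$12$ generators, each is homogeneous of point multi-degree $(1,\dots,1)$, and a choice of $(i,j,k,l,m,n)$ with $i \le j \le k \le l \le m \le n$ pins down its letter multi-degree $(d_x,d_y,d_z)$; I would tabulate which variables are absent from each $G^3_4(c_1; R_i, \dots, R_n)$, just as in Tables~\ref{tab:coefficients} and~\ref{tab:missingvariables}. The generators of pure letter multi-degree $(0,6,6)$, $(6,0,6)$, $(6,6,0)$ are disposed of directly: e.g.\ the one with no $x$-variable cannot be a polynomial combination of the others, each of whose monomials carries an $x$. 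For the remaining ones the core is the following claim, the grid analogue of the claim in the proof of Theorem~\ref{Generators}: \emph{none of the degree-$12$ generators lies in the collinearity ideal $\langle [1\,2\,3]_X, \dots, [10\,11\,12]_X\rangle$.} This is proved by a bookkeeping argument on letter and point multi-degrees of monomials, tracing which variables in a hypothetical combination must be cancelled and showing the cancellation forces all coefficients to vanish. Granting the claim, the conclusion follows exactly as in Theorem~\ref{Generators}: if a degree-$12$ generator $g = \sum p_i g_{3,i} + \sum k_i g_{12,i}$ then, by letter multi-degree, $g$ and $\sum k_i g_{12,i}$ both lie in the collinearity ideal, and since the $g_{12,i}$ have distinct letter multi-degrees at least one of them would lie in that ideal, contradicting the claim.

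\textbf{Main obstacle.} The delicate part is the minimality claim, i.e.\ showing no degree-$12$ $G^3_4$-generator lies in the collinearity ideal: with $36$ variables and degree $12$ the monomial bookkeeping is substantially heavier than in the quadrilateral case, and one must organise the argument around the row/column block structure of the grid (and the symmetry among the three rows) to keep the case analysis finite. The liftability inputs (Theorem~\ref{TFAE 3x4 grid}, Corollary~\ref{decomposition quasi-liftable}) and the radicality check are expected to be routine given the earlier results and the GitHub computations.
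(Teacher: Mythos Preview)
Your overall architecture---$I\subseteq I_{G^3_4}$ from Theorem~\ref{CG34}, then radicality of $I$ plus $V(I)\subseteq V(I_{G^3_4})$ via Corollary~\ref{decomposition quasi-liftable} and Theorem~\ref{TFAE 3x4 grid}---matches the paper's. The loop case and the perturbation step are also handled the same way (the paper uses Figure~\ref{fig:3x4 grid lifting} for the ``eleven points on a line plus one loop'' situation).

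Where you diverge is in the two computational steps, and in the first of them there is a real gap. For radicality you propose to invoke a square-free GRevLex initial ideal computation for $I$, parallel to the quadrilateral case. The paper does \emph{not} do this for the grid; recall from the introduction that standard Gr\"obner computations for this configuration do not terminate, so there is no reason to expect that check to be available. Instead the paper argues as follows: from $V(I)=V(I_{G^3_4})$ one gets $I_{G^3_4}=\sqrt{I}$, and then one shows directly that $I$ coincides with one of the two prime components in the Pfister--Steenpass decomposition of the circuit ideal \cite{PfSt}, by reducing their generators modulo the $G^3_4$ polynomials in \textit{Macaulay2}. That identifies $I$ with a known prime, hence $I$ is radical and $I=I_{G^3_4}$. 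So the key external input you are missing is the \cite{PfSt} prime decomposition; without it (or some substitute for the infeasible initial-ideal check) your radicality step does not go through.

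For minimality you outline a hand proof via letter and point multi-degrees, extending the quadrilateral argument, and you correctly flag this as the main obstacle. The paper makes no attempt at such an argument: minimality is simply verified numerically (code on GitHub). Your proposed route is more ambitious and might be made to work, but with $28$ degree-$12$ generators in $36$ variables the bookkeeping is far heavier than in Theorem~\ref{Generators}, and the paper gives no indication that the claim ``no $G^3_4$-generator lies in the collinearity ideal'' admits the same short cancellation argument.
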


\begin{proof}
For the ease of notation, we denote $I$ for the ideal in the right hand side of the above equation.
    By Theorem~\ref{CG34}, we have that $I \subset I_{G^3_4}$. We prove the converse inclusion in the following 3 steps.

\medskip
\noindent{$\pmb{V(I)=V(I_{G^3_4})}$.} 
We prove this equality by double inclusion. Since $I \subset I_{G^3_4}$, we have that $V_{G^3_4} \subseteq V(I)$. We prove  $V(I) \subset V(G^3_4)$. Let $A \in \CC^{36}$ be a point in $V(I)$. The coordinates $A^1_1, A^2_1, A^3_1, \dots, A^1_{12}, A^2_{12}, A^3_{12}$ of $A$ can be seen as the $(x,y,z)$-coordinates of 12 points in the projective plane. These can be represented with the following $3 \times 12$ matrix:
    \[ A = \left( \begin{matrix}
        A_1^1 & A_2^1& \dots & A^1_{12} \\
        A_1^2 & A_2^2 & \dots & A^2_{12} \\
        A_1^3 & A_2^3 & \dots & A^3_{12}
    \end{matrix} \right).\]
    The columns of $A$ generate a realisable matroid $M_A$ which corresponds to a point-line configuration $\CCC_A$ (as the columns of $A$ are coordinates of points).
Since $A \in V(I)$, $A$ satisfies the determinantal collinearity conditions. Hence, $A$ is a point in the combinatorial closure of the matroid associated to the grid-configuration $G^3_4$. Corollary~\ref{decomposition quasi-liftable} ensures that $A$ is close either to a realisation of the matroid corresponding to a line with 12 marked points or to a realisation of $G^3_4$. 

    The fact that $A \in V(I)$ implies that the coordinates of $A$ satisfy also the $G^3_4$ polynomials, implying that, if the points of $A$ lie on a line, then they can be lifted to a $3 \times 4$ grid.
    This is true also in the case of having 11 points on a line and a loop, indeed we can always find a 12th point on the line such that $1, \dots, 12$ are the projective image of a $3 \times 4$ grid. Among the 11 points there is a $3 \times 3$ sub-grid which can always be lifted by Remark~\ref{lifting 3x3}. The two other points are determined by the intersection of this grid with the fibres and the last one comes consequently (see Figure~\ref{fig:3x4 grid lifting}). Now, via the perturbation procedure, we conclude that $A$ is in the Euclidean closure of $\Gamma_{G^3_4}$. Hence, $A \in V(I_{G^3_4})$.

\begin{figure}[h]
        \centering
        \includegraphics[scale=0.3]{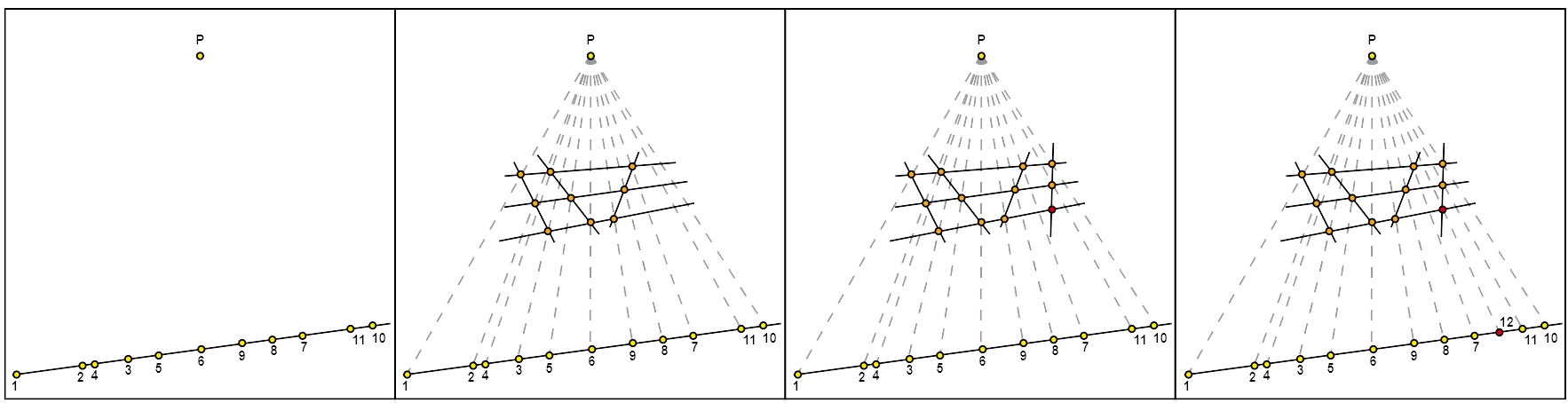}
        \caption{From left to right, this figure justify the existence of at least a choice of 12 completing $1, \dots, 11$ to a projective image of a $3\times4$ grid.}
        \label{fig:3x4 grid lifting}
    \end{figure}

\medskip
\noindent{\bf The ideal $\pmb{I_{G^3_4}}$ is radical.} 
The circuit ideal of the $3\times 4$ grid is radical by the explicit computation in \cite{PfSt}, where the authors show that the circuit ideal is the intersection of two prime ideals, hence it is radical. We also have that $I_{G^3_4} = \sqrt{I}$. In addition, in the proof of Theorem~\ref{TFAE 3x4 grid}, we have computed in \textit{Macaulay2} that $I$ is equal to one of the prime ideals in their decomposition, by reducing their generators modulo the $G^3_4$ polynomials, which all reduce to zero. Thus, as $I$ is a radical ideal, $I_{G^3_4} = I$.

    \medskip
\noindent{\bf Minimal generating set.} To prove the minimality of the generating set, we show that none of the generators belong to the ideal generated by the others. This is verified numerically, and the code is available on \href{https://github.com/ollieclarke8787/PointAndLineConfigurations}{GitHub}.
\end{proof}

As a direct 
consequence of Corollary~\ref{decomposition quasi-liftable} and Theorem~\ref{generators grid} we have:
\begin{corollary}
    The circuit variety $V_{\C (G^3_4)}$ decomposes irreducibly as:
\[      
V_{\C (G^3_4)}= V_0 \cup V_{QS}= V(\langle [i j k] \, | \, \{i,j,k\} \subseteq \{1, \dots, 12\} \rangle) \cup V(I_{G^3_4}).
\]
\end{corollary}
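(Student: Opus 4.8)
The plan is to obtain the statement by combining the two cited results, after checking that the grid configuration meets their hypotheses. First I would verify that the simple matroid $G^3_4$ satisfies the assumptions of Corollary~\ref{decomposition quasi-liftable}: its point-line configuration $\CCC_{G^3_4}$ is connected (the associated graph $G_{\CCC}$ is connected, as one reads off Figure~\ref{fig:3x4g}), it is quasi-liftable (this is noted when Figure~\ref{fig:3x4g} is introduced and is implicit in the proof of Theorem~\ref{TFAE 3x4 grid}, where the collinearity matrix $\Lambda_{G^3_4}$ fails to have full rank exactly under the vanishing of the $G^3_4$ polynomials, while removing any one line makes the resulting configuration contain a liftable $3\times 3$ sub-grid), and every point lies on exactly two lines, namely one row and one column. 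Granted these hypotheses, Corollary~\ref{decomposition quasi-liftable} applies directly and yields both the set-theoretic decomposition $V_{\C(G^3_4)} = V_0 \cup V_{G^3_4}$ and the assertion that it is irreducible (equivalently, $\sqrt{I_{\C(G^3_4)}} = I_0 \cap I_{G^3_4}$ is a prime decomposition).

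Next I would make the two components explicit. The variety $V_0$ is, by definition, the matroid variety of the configuration consisting of a single line carrying $12$ marked points; a $3\times 12$ matrix over $\CC$ has all its columns on a common line of $\PP^2_{\CC}$ precisely when every $3\times 3$ minor vanishes, so $V_0 = V(\langle [ijk]_X \mid \{i,j,k\}\subseteq\{1,\dots,12\}\rangle)$, which is the classical determinantal variety of $3\times 12$ matrices of rank at most two and hence is irreducible. For the other component, Theorem~\ref{generators grid} identifies $I_{G^3_4}$ with the ideal generated by the collinearities $[1\,2\,3]_X,\dots,[10\,11\,12]_X$ together with the polynomials $G^3_4(c_1;R_i,R_j,R_k,R_l,R_m,R_n)$, so $V_{G^3_4} = V(I_{G^3_4})$; its irreducibility is exactly Theorem~\ref{decomposition liftable}, applicable because $\CCC_{G^3_4}$ has no triplet of concurrent lines. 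Substituting these two descriptions into the decomposition of the previous paragraph gives the displayed equalities (reading the occurrence of $V_{QS}$ in the statement as the $3\times 4$ grid matroid variety $V_{G^3_4}$).

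I do not expect a genuine obstacle: the substantive work is entirely carried out in Corollary~\ref{decomposition quasi-liftable} and Theorem~\ref{generators grid}, and what remains is bookkeeping. The only points deserving a sentence of care are the verification of the quasi-liftability and the ``at most two lines'' hypotheses for $G^3_4$ (both essentially recorded earlier in the section), and the standard identification of $V_0$ with the rank-$\le 2$ determinantal locus together with its well-known irreducibility.
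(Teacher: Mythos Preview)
Your proposal is correct and follows the same route as the paper, which obtains the corollary as a direct consequence of Corollary~\ref{decomposition quasi-liftable} and Theorem~\ref{generators grid}. You supply more detail than the paper does (explicitly checking the connectedness, quasi-liftability, and ``at most two lines per point'' hypotheses, and spelling out why $V_0$ is the rank-$\le 2$ determinantal locus), but the underlying argument is identical.
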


\medskip
\noindent{\bf Acknowledgement.}
G.M. and F.M. were supported by the FWO grant G023721N. F.M. was partially supported by the grants G0F5921N (Odysseus programme) from the Research Foundation - Flanders (FWO), iBOF/23/064 from KU Leuven and the UiT Aurora project MASCOT. G.M. would like to thank Emiliano Liwski for helpful discussions about the proofs of Lemma~\ref{non-simple} and Theorem~\ref{circuit variety of liftable conf.}.

\printbibliography

\medskip
\noindent 
\small{\textbf{Authors' addresses}

\noindent
School of Mathematics, University of Edinburgh, Edinburgh, United Kingdom
\\ E-mail address: {\tt oliver.clarke@ed.ac.uk}

\medskip  \noindent
Department of Computer Science, KU Leuven, Celestijnenlaan 200A, B-3001 Leuven, Belgium\\ 
   Department of Mathematics, KU Leuven, Celestijnenlaan 200B, B-3001 Leuven, Belgium\\ E-mail address: {\tt fatemeh.mohammadi@kuleuven.be}

\medskip  \noindent
Department of Mathematics, KU Leuven, Celestijnenlaan 200B, B-3001 Leuven, Belgium
\\ E-mail address: {\tt giacomo.masiero@kuleuven.be}
}
\end{document}